\documentclass[12pt, a4paper]{amsart}
\voffset=-.5cm
\textwidth=15cm
\textheight=23cm
\oddsidemargin=1cm
\evensidemargin=1cm
\usepackage{amsmath}
\usepackage{amsxtra}
\usepackage{amscd}
\usepackage{amsthm}
\usepackage{amsfonts}
\usepackage{amssymb}
\usepackage {pstricks}
\usepackage{pstricks,pst-node}


\newtheorem{theorem}{Theorem}[section]
\newtheorem*{theorem*}{Theorem}
\newtheorem{lemma}[theorem]{Lemma}
\newtheorem{proposition}[theorem]{Proposition}
\newtheorem*{proposition*}{Proposition}
\newtheorem{corollary}[theorem]{Corollary}
\newtheorem{definition}[theorem]{Definition}

\theoremstyle{definition}
\newtheorem{remark}[theorem]{Remark}
\newtheorem{example}[theorem]{Example}

\numberwithin{equation}{section}


\def\1{\hbox{1\kern-.35em\hbox{1}}}


\newcommand{\Z}{{\mathbb Z}}

\newcommand{\C}{{\mathbb C}}

\newcommand{\fg}{{\mathfrak g}}

\newcommand{\fl}{{\mathfrak l}}

\newcommand{\fgl}{{\mathfrak {gl}}}

\newcommand{\id}{{\rm id}}
\newcommand{\End}{{\rm End}}
\newcommand{\Hom}{{\rm Hom}}
\newcommand{\Tor}{{\rm Tor}}
\newcommand{\Ext}{{\rm Ext}}

\newcommand{\U}{{\rm U}}

\newcommand{\Ul}{{{\rm U}_q(\fl)}}
\newcommand{\Uq}{{{\rm U}_q(\fg)}}

\newcommand{\cA}{{\mathcal A}}

\newcommand{\cE}{{\mathcal E}}
\newcommand{\cF}{{\mathcal F}}
\newcommand{\cS}{{\mathcal S}}
\newcommand{\cP}{{\mathcal P}}

\newcommand{\cO}{{\mathcal O}}
\newcommand{\cM}{{\mathcal M}}
\newcommand{\cN}{{\mathcal N}}

\newcommand{\QP}{{\mathcal{QP}}}

\newcommand{\Ulmod}{\mbox{$\Ul$-\rm{\bf mod}}}
\newcommand{\Umod}{\mbox{$\U$-\rm{\bf mod}}}

\newcommand{\lr}{{\longrightarrow}}
\newcommand{\Mgr}{{\mathcal M}_{gr}}


\begin{document}

\title[Equivariant K-theory of quantum group actions]
{Quantum group actions on rings and \\ equivariant
K-theory}


\author{G. I. Lehrer and R. B. Zhang}
\address{School of Mathematics and Statistics,
University of Sydney, NSW 2006, Australia.}
\email{gusl@maths.usyd.edu.au, rzhang@maths.usyd.edu.au}

\begin{abstract}
Let $\Uq$ be a quantum group.
Regarding a (noncommutative) space with $\Uq$-symmetry as a $\Uq$-module 
algebra $A$, we may think of equivariant vector bundles on $A$ as projective
$A$-modules with compatible $\Uq$-action. We construct an equivariant K-theory
of such quantum vector bundles using Quillen's exact categories, and provide means for 
its compution. The equivariant K-groups of quantum homogeneous spaces and quantum
symmetric algebras of classical type are computed.
\end{abstract}
\subjclass[2000]{19L47, 20G42, 17B37, 58B32, 81R50}
\maketitle

\tableofcontents

\section{Introduction} 

In recent years there has been much work
exploring various types of noncommutative geometries with possible
applications in physics. The best developed theory is Connes'
noncommutative differential geometry \cite{Co} formulated within the
framework of $C^*$-algebras, which incorporates K-theory and cyclic
cohomology and has yielded new index theorems. Various aspects of
noncommutative algebraic geometry have also been developed (see,
e.g., \cite{AZ,SvdB}).

Noncommutative generalisations of classical geometries are based on
the strategy of regarding a space as defined by an algebra of functions,
which is commutative in the classical case.
In noncommutative geometry \cite{Co} one
replaces this commutative algebra by noncommutative algebra; in
analogy with the classical case, vector
bundles are regarded as finitely generated projective modules over this
algebra. One may then
investigate problems with geometric origins by means of these algebraic
structures. This permits cross fertilisation of algebraic and
geometric ideas, and is expected to lead to mathematical advances in both
areas. An important motivation from physics for studying noncommutative
geometry is the notion that spacetime at the Planck scale
becomes noncommutative \cite{DFR}; thus noncommutative geometry may
be a necessary ingredient for a consistent theory of quantum
gravity.

Much of classical algebraic  and differential geometry concerns algebraic
varieties and manifolds with algebraic or Lie group actions. Correspondingly,
in noncommutative geometry one studies noncommutative
algebras with Hopf algebra actions.  Natural examples of
such noncommutative geometries are quantum analogues \cite{GZ, Z,
ZZ, LZZ} of homogeneous spaces and homogeneous vector bundles \cite{Bo},
which have proved useful for formulating the quantum group version
\cite{APW} of the Bott-Borel-Weil theorem \cite{Bo} into a
noncommutative geometric setting \cite{GZ, Z}. Some quantum
homogeneous spaces such as quantum spheres have been particular 
objects of attention  \cite{DLPS, Maj, LZZ} because of potential physical
applications. We note that interesting examples of Hopf algebras
acting on noncommutative algebras are noncocommutative,  and
thus do not correspond to groups.

In this paper we study noncommutative geometries with quantum group
symmetries. In particular, we shall study an equivariant
algebraic K-theory of 
such spaces which is a generalisation of
the equivariant algebraic K-theory of reductive group actions
investigated by Bass and Haboush in \cite{BH1, BH2}.
The equivariant topological K-theory of Lie group actions and
algebraic group scheme actions have been developed in the celebrated
papers \cite{Seg, Dp, Th}. Following the work of Bass and Haboush,
several authors have addressed geometric themes in the context of
the equivariant K-theory of algebraic vector bundles \cite{Be,MM,MMP}.
A recent treatment of the K-theory of
compact Lie group actions in relation to representation theory may be found in
\cite[\S 12]{Ku}.

The need for an equivariant K-theory in the noncommutative setting
was already clear in the classification of quantum homogeneous
bundles \cite{ZZ}. Very recently, Nest and Voigt have extended the notion
of Poincar´e duality in K-theory to the setting of compact quantum
group actions \cite{Nest} within the framework of $C^*$-algebras.

In our situation, the two crucial notions which are needed are 
those of an `equivariant noncommutative space', which we shall
take to mean a module algebra
$A$ over a Hopf algebra \cite{M}, and an `equivariant vector
bundle on $A$' which we shall take to mean an equivariant module
over the module algebra $A$. Specifically, a module algebra is an associative
algebra that is also a module for the Hopf algebra, whose
algebraic structure is preserved by the action; equivariant modules over module
algebras were introduced in \cite{ZZ} in the context of
quantum homogeneous spaces, and we generalise it here to arbitrary module
algebras.

Let $\U=\U_q(\fg)$ be a quantum group defined over the field $\Bbbk=\C(q)$,
and let $A$ be a module algebra over $\U$. We
introduce the category $\cM(A, \U)$ of $\U$-equivariant $A$-modules
which are finitely $A$-generated and locally $\U$-finite. The full
subcategory $\cP(A, \U)$ of $\cM(A, \U)$ consisting of
finitely $A$-generated, locally $\U$-finite, projective equivariant modules
is an exact category in a natural way. Thus Quillen's K-theory
of exact categories applies to $\cP(A, \U)$, giving rise to an
algebraic K-theory of module algebras which is equivariant under the
action of the quantum group $\U$ (Definition \ref{equivKgps}).

Properties of equivariant K-theory are developed for
module algebras with filtrations which are stable under quantum group
actions. Under a regularity assumption for the module algebras, we
establish a relationship between the K-groups of the filtered
algebras and the degree zero subalgebras of the corresponding graded
algebras (see Theorem \ref{filtered} for details).  This may be
regarded as an equivariant analogue of \cite[\S 6, Theorem 7]{Q} on
the higher algebraic K-theory of filtered rings.

We apply Theorem \ref{filtered} to compute the equivariant K-groups
for a class of module algebras over quantum groups, which we call
quantum symmetric algebras. The results are summarised in Theorem
\ref{q-symmalg}. In establishing the regularity of the left
Noetherian quantum symmetric algebras, some elements of the theory
of Koszul algebras \cite{PP} are used, which are discussed in
Section \ref{Koszul-complex}.

One of the main motivations of \cite{BH2} is to
prove the results \cite[Theorems 1.1,2.3, Cor. 2.4] {BH2}, which 
would be easy consequences of the Serre conjecture
if one assumed that all reductive group actions on affine space are linearisable.
Thus a natural question which arises from our work is 
whether there is a non-commutative version of the Serre conjecture
for the quantum symmetric algebras we consider. In \cite{ART}, the 
case of the natural representation of $\U_q(\fgl_n)$ is discussed.

We also study the equivariant K-theory of quantum homogeneous spaces 
in detail. Given a quantum homogeneous space of a quantum
group $\Uq$ which corresponds to a reductive quantum subgroup $\Ul$, we
show that the equivariant K-groups are isomorphic to the K-groups of
the exact category whose objects are the $\Ul$-modules (see
Corollary \ref{main-1} for the precise statement).

Properties of the categories $\cM(A, \U)$
and $\cP(A, U)$, analogous to those of their classical analogues
are established, and used in the study of equivariant K-theory.
We prove a splitting lemma (Proposition
\ref{splitting}), which enables us to characterise the finitely $A$ generated,
locally $\U$-finite,
projective $\U$-equivariant $A$-modules (Corollary \ref{projective}). A
similar result in the commutative setting was proved by Bass and
Haboush \cite{BH1, BH2} for reductive algebraic group actions.

The equivariant algebraic K-theory constructed here
generalises, in a completely straightforward manner,
to Hopf algebras whose locally finite modules
are semi-simple, e.g.,
universal enveloping algebras of finite dimensional semi-simple Lie
algebras. In fact, the case of such universal enveloping algebras
essentially covers the Bass-Haboush theory for semi-simple
algebraic group actions when the
module algebras are commutative.
We also point out that the equivariant algebraic K-theory of
a $\U$-module algebra $A$ developed here is
different from the usual algebraic K-theory of the smash product
algebra $R:=A\#\U$, see Remark \ref{not-smash}.

The organisation of the paper is as follows. In Section
\ref{definitions}, we introduce various categories of equivariant modules
for module algebras over quantum groups, and define the equivariant algebraic K-theory
of quantum group actions. In Section \ref{equiv-modules}, the
theory of equivariant modules is developed, and
is used to study quantum group equivariant K-theory.
In Section \ref{filtered-algebras} we develop the equivariant K-theory
of filtered module algebras, and in the remaining two sections
we study concrete examples.  In Section
\ref{quantum-symmalg}, we compute the equivariant K-groups of the
quantum symmetric algebras, and in Section \ref{q-homo-spaces} we
investigate in detail the equivariant K-groups of quantum
homogeneous spaces.

\section{Equivariant K-theory of quantum group actions}\label{definitions}

The purpose of this section is to introduce an equivariant algebraic K-theory
of quantum group actions. This theory generalises,
in a straightforward way to  arbitrary Hopf algebras.

\subsection{Module algebras and equivariant modules}
For any finite dimensional simple complex Lie algebra $\fg$,
denote by $\U:=\U_q(\fg)$ the quantum group defined over the field $\Bbbk=\C(q)$ of
rational functions in $q$;
$\U$ has a standard presentation with generators $\{e_i, f_i, k_i^{\pm 1} \mid
i=1,\dots,r\}$ ($r=\text{rank}(\fg)$), and relations which may be found e.g., in \cite{APW}.
If $\fg$ is a semi-simple Lie algebra, $\U$ will denote the tensor product
of the quantum groups in the above sense, associated with the simple factors.

It is well known that $\U$ has the structure of a
Hopf algebra; denote its co-multiplication by $\Delta$, co-unit
by $\epsilon$ and antipode by $S$. We
shall use Sweedler's notation for co-multiplication: given any
$x\in \U$, write $\Delta(x)=\sum_{(x)} x_{(1)}\otimes x_{(2)}$.
The following relations are among those satisfied by
any Hopf algebra:
\[
\begin{aligned}
\sum_{(x)} \epsilon(x_{(1)}) x_{(2)} &= \sum_{(x)}
x_{(1)}\epsilon(x_{(2)}) = x,\\
\sum_{(x)} S(x_{(1)}) x_{(2)} &= \sum_{(x)} x_{(1)}S(x_{(2)}) =
\epsilon(x).
\end{aligned}
\]
Let $\Delta'$ be the opposite co-multiplication, defined by
$\Delta'(x)=\sum_{(x)} x_{(2)}\otimes x_{(1)}$ for any $x\in \U$.

Denote by $\Umod$ the category of finite
dimensional left $\U$-modules of type-$(1, \dots, 1)$.
Then $\Umod$ is a semi-simple braided tensor category.
A (left) $\U$-module $V$ is
called {\em locally finite} if for any $v\in
V$, the cyclic submodule $\U v$ generated by $v$ is
finite dimensional. We shall make use of the important 
fact that locally finite modules are semi-simple.
We shall say that a locally finite $\U$-module is type-$(1, \dots,
1)$ if all its finite dimensional submodules are type-$(1, \dots, 1)$.

An associative algebra $A$ with identity $1$ is a (left) {\em module
algebra over $\U$} \cite{M} if it is a left $\U$-module, and the
multiplication $A\otimes_\Bbbk A \longrightarrow A$ and unit map
$\Bbbk \longrightarrow A$ are $\U$-module homomorphisms. 
Explicitly, if we write the $\U$-action on $A$ as $\U\otimes_\Bbbk A
\longrightarrow A$, $x\otimes a \mapsto x\cdot a$, for all $a\in A$
and $x\in \U$, then
\[
\begin{aligned}
x\cdot(a b) = \sum_{(x)} (x_{(1)}\cdot a)(x_{(2)}\cdot b),\quad
x\cdot 1 = \epsilon(x) 1.
\end{aligned}
\]

We call a $\U$-module algebra $A$ {\em locally finite}
if it is locally finite as a $\U$-module. If all its
submodules are in $\U$-{\bf mod}, we say that the locally finite
$\U$-module algebra is type-$(1, \dots, 1)$.

An element $a\in A$ is {\em $\U$-invariant} if $x\cdot a
=\epsilon(x) a$ for all $x\in \U$. We denote by  $A^\U$ the
submodule of $\U$-invariants of $A$, that is,
\[A^\U:= \{a\in A \mid x\cdot a =\epsilon(x) a, \quad  \forall x\in \U\}.\]
The fact that $\U$ is a Hopf algebra implies that this
is a subalgebra of $A$. Indeed, for all $a, b\in A^\U$, we have
\[x\cdot(a b) = \sum_{(x)} (x_{(1)}\cdot a)(x_{(2)}\cdot b)
= \sum_{(x)} \epsilon(x_{(1)})\epsilon(x_{(2)}) a b =\epsilon(x) a
b.
\]
Hence $a b \in A^\U$. We shall refer to $A^\U$ as the {\em
subalgebra of $\U$-invariants} of $A$.

Let $M$ be a left $A$-module with structure map $\phi: A\otimes
M\longrightarrow M$, and also a locally finite left $\U$-module with
structure map $\mu: \U\otimes M \longrightarrow M$.
Then $A\otimes_\Bbbk M$ has a natural $\U$-module structure
\begin{eqnarray*}
\mu': \U\otimes \left(A\otimes M\right)&\longrightarrow& A\otimes M,
\\  x\otimes (a\otimes m) &\mapsto& \sum_{(x)} x_{(1)}\cdot a \otimes
x_{(2)} m.
\end{eqnarray*}
The $A$-module and $\U$-module structures of $M$
are said to be compatible if the following diagram commutes
\begin{equation}\label{compatibility}
\begin{array}{c c c c c c}
&\U\otimes (A\otimes M) & \stackrel{\id\otimes
\phi}{\longrightarrow} & \U\otimes M&\\
& \mu'\downarrow&  &\mu\downarrow&\\
&A\otimes M & \stackrel{\phi}{\longrightarrow} & M.&
\end{array}
\end{equation}
In this case, $M$ is called a {\em $\U$-equivariant left $A$-module}, or
$A$-$\U$-module for simplicity.
A morphism between two $A$-$\U$-modules is an $A$-module map which
is at the same time also $\U$-linear. We denote by $\Hom_\text{$A$-$\U$}(M,
N)$ the space of $A$-$\U$-morphisms.

Denote by $A$-$\U$-{\bf mod} the category of locally
$\U$-finite $A$-$\U$-modules (i.e., locally $\U$-finite
$\U$-equivariant left $A$-modules), which as $\U$-modules are of type-$(1, \dots, 1)$.
It is clear that $A$-$\U$-{\bf mod}
is an abelian category.  Let $\cM(A, \U)$  be the full
subcategory of $A$-$\U$-modules consisting of finitely $A$-generated objects,
and denote by $\cP(A, \U)$ the full subcategory
of $\cM(A, \U)$ whose objects are the projective objects in $A$-$\U$-{\bf mod}.

\begin{remark}\label{type1}
\begin{enumerate}
\item In this work, $\U$ shall generally denote $\U=\U_q(\fg)$,
where $\fg$ is a reductive Lie algebra. In perticular, It may happen that the
root lattice has smaller rank than the weight lattice. 
We shall consider locally finite $\U$-modules and locally finite $\U$-module algebras
which are of type-$(1, \dots, 1)$ for the chosen dominant weights. In 
particular, the categories of finite dimensional modules considered are semisimple.
\item The categories just introduced are quantum analogues of those occurring
in \cite[Theorem 2.3]{BH2}.
\end{enumerate}
\end{remark}

We also define a {\em $\U$-equivariant right $A$-module} $M$,
as a left $\U$-module which is also a
right $A$-module, such that the module structures are compatible in the sense that
\[ x(m a) = \sum_{(x)} (x_{(1)}m)(x_{(2)}\cdot a) \]
for all $x\in\U$, $a\in A$ and $m\in M$.
Similarly, we also define a {\em $\U$-equivariant $A$-bimodule} $M$,
as a left $\U$-module which is also an
$A$-bimodule, such that
\[ x(a m b) = \sum_{(x)} (x_{(1)} \cdot a) (x_{(2)} m(x_{(3)})\cdot b) \]
for all $x\in\U$, $a, b\in A$ and $m\in M$.

Let $R$ be a $\U$-equivariant right $A$-module, and let $B$ be a $\U$-equivariant  $A$-bimodule.
For any $\U$-equivariant left $A$-module $M$, $R\otimes_A M$ has the structure of left $\U$-module,
and $B\otimes_A M$ has the structure of $\U$-equivariant left $A$-module, with the module structures define
in the following way. For all $r\in R$, $b\in B$, $a\in A$ and $m\in M$,
\[
\begin{aligned}
&x(r\otimes_A m) = \sum_{(x)} x_{(1)}r\otimes_A x_{(1)}m,  \\
&x(b\otimes_A m) = \sum_{(x)} x_{(1)}b\otimes_A x_{(1)}m, \\
&a(b\otimes_A m) = ab\otimes_A m.
\end{aligned}
\]

The following result is now clear.
\begin{lemma} \label{tensor-functors} Let $A$ be a locally finite $\U$-module algebra.
Let $R$ be a $\U$-equivariant right $A$-module, and let $B$ be a $\U$-equivariant
$A$-bimodule. Assume that both $R$ and $B$ are locally $\U$-finite,
then we have covariant functors
\[
\begin{aligned}
&R\otimes_A - : \text{$A$-$\U$-{\bf mod}} \longrightarrow \text{$\U$-{\bf Mod}$_{l.f.}$},\\
&B \otimes_A - : \text{$A$-$\U$-{\bf mod}} \longrightarrow \text{$A$-$\U$-{\bf mod}},
\end{aligned}
\]
where $\U$-{\bf Mod}$_{l.f.}$ is the category of locally finite $\U$-modules.
Furthermore, if $B$ is also finitely $A$-generated as a left $A$-module,
then we have the covariant functor
\[B \otimes_A - : \cM(A, \U) \longrightarrow  \cM(A, \U). \]
\end{lemma}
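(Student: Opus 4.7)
The plan is to verify, for each of the claimed functors, three things: that the formulas given actually define the asserted algebraic structure on the tensor product; that the finiteness conditions defining $\U$-{\bf Mod}$_{l.f.}$, $A$-$\U$-{\bf mod}, and $\cM(A,\U)$ are preserved; and that the construction is functorial in $M$. The main technical point, on which the remaining verifications depend, is that the diagonal $\U$-action on $R\otimes_\Bbbk M$ (respectively $B\otimes_\Bbbk M$) descends to the balanced tensor product $R\otimes_A M$ (resp.\ $B\otimes_A M$).

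For the first assertion I would check, for $a\in A$, $r\in R$, $m\in M$, $x\in \U$, that the two expressions $x(ra\otimes_A m)$ and $x(r\otimes_A am)$ agree. Using the right-module equivariance of $R$ and coassociativity one gets
\[
x(ra\otimes_A m)=\sum x_{(1)}(ra)\otimes_A x_{(2)}m=\sum (x_{(1)} r)(x_{(2)}\cdot a)\otimes_A x_{(3)}m,
\]
and the analogous manipulation of $x(r\otimes_A am)$ using the left-module equivariance of $M$ gives $\sum x_{(1)}r\otimes_A (x_{(2)}\cdot a)(x_{(3)}m)$, which matches after moving $x_{(2)}\cdot a\in A$ across the balanced tensor. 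Compatibility of the resulting action with multiplication in $\U$ is a routine Hopf-algebra computation. Local $\U$-finiteness follows immediately because $\U(r\otimes_A m)$ sits in the image of the finite-dimensional $\U r\otimes_\Bbbk \U m$, and functoriality is clear since, for any $A$-$\U$-morphism $f\colon M\to N$, the map $\id_R\otimes f$ is both $A$-balanced and $\U$-linear by construction.

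For the second assertion, in addition to the previous checks I would verify that the natural left $A$-action $a(b\otimes_A m)=ab\otimes_A m$ is compatible with the $\U$-action in the sense of diagram \eqref{compatibility}; this is precisely where the $\U$-equivariance of the left $A$-action on the bimodule $B$ (i.e.\ the defining relation with trivial right factor) is used. For the final clause, if $B=\sum_{l=1}^n Ab_l$ and $M=\sum_{j=1}^k A m_j$, then using the relations in $B\otimes_A M$ any simple tensor rewrites as $b\otimes_A m=\sum_{l,j} c_{lj}(b_l\otimes_A m_j)$ for suitable $c_{lj}\in A$, so that the finite set $\{b_l\otimes_A m_j\}$ generates $B\otimes_A M$ as a left $A$-module.

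The only genuinely non-trivial step is the well-definedness on the balanced tensor product, and that reduces to a single application of coassociativity together with the compatibility \eqref{compatibility}; all remaining assertions are formal consequences of the Hopf-algebraic setup.
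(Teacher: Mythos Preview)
Your proposal is correct and complete; in fact the paper gives no proof at all, stating only that ``the following result is now clear'' after recording the formulas for the $\U$- and $A$-actions on the tensor products. What you have written is precisely the routine verification the authors are leaving to the reader: the key point, as you identify, is that the diagonal $\U$-action on $R\otimes_\Bbbk M$ preserves the subspace spanned by $ra\otimes m - r\otimes am$ (your coassociativity computation), and everything else---local $\U$-finiteness via $\U(r\otimes_A m)\subset\mathrm{image}(\U r\otimes_\Bbbk \U m)$, compatibility \eqref{compatibility} for the left $A$-action on $B\otimes_A M$, and finite generation in the last clause---follows formally.
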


In this work, the term `module' will mean left module 
unless otherwise stated.

\subsection{Equivariant K-theory of quantum group actions}\label{definitions-K}
Recall that an exact category $\cP$ is an additive category with a
class ${\mathbf E}$ of short exact sequences which satisfies a
series of axioms, see \cite[p.99]{Q} or \cite[Appendix A]{K}. For
our purposes, we may think of an exact category $\cP$ as a full
(additive) subcategory of an abelian category $\cA$ which is closed
under extensions in $\cA$, that is, for any short exact sequence
$0\longrightarrow M' \longrightarrow M \longrightarrow M''
\longrightarrow 0$ in $\cA$, if $M'$ and $M''$ are in $\cP$, then
$M$ also belongs to $\cP$. Typical examples of exact categories are
\begin{enumerate}
\item any abelian category with exact structure given by all
short exact sequences,  and 
\item the full
subcategory of finitely generated projects (left) modules of the
category of (left) modules over a ring.
\end{enumerate}
For any exact category $\cP$ in which the
isomorphism classes of objects form a set,
one may define the Quillen category $\QP$.
Quillen's algebraic K-groups \cite{Q} of the exact category $\cP$ are defined
to be the homotopy groups of the classifying space $B(\QP)$ of
$\QP$:
\[
K_i(\cP) = \pi_{i+1}(B(\QP)), \quad i=0, 1, \dots.
\]

If $F: \cP_1 \longrightarrow \cP_2$ is an exact functor between
exact categories, it induces a functor ${\mathcal Q}F: \QP_1
\longrightarrow \QP_2$ between the corresponding Quillen
categories. This functor then induces a cellular map $B{\mathcal
Q}F: B(\QP_1) \longrightarrow B(\QP_2)$, which in turn leads
to the homomorphisms
\[
F_*: K_i(\cP_1) \longrightarrow K_i(\cP_2), \quad \text{for
all $i$}.
\]

We now turn to the definition of an equivariant algebraic K-theory
of quantum group actions.
The following fact is immediate from the definition of $\cP(A, \U)$.
\begin{theorem}
Let $A$ be a locally finite module algebra over the quantum group $\U$.
Then the category $\cP(A, \U)$ of finitely $A$-generated,
locally $\U$-finite, projective $\U$-equivariant $A$-modules
is an exact category.
\end{theorem}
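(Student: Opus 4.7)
The plan is to apply the characterization of exact categories given just above the statement, namely that it suffices to exhibit $\cP(A,\U)$ as a full additive subcategory of an ambient abelian category that is closed under extensions. The ambient abelian category of choice is $A\text{-}\U\text{-}{\bf mod}$, which the paper has already noted is abelian.

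First I would check that $\cP(A,\U)$ is a full additive subcategory. It is full by definition. The zero module is trivially finitely $A$-generated, locally $\U$-finite, and projective in $A\text{-}\U\text{-}{\bf mod}$. Given two objects $M_1,M_2\in\cP(A,\U)$, their direct sum $M_1\oplus M_2$ in $A\text{-}\U\text{-}{\bf mod}$ is again finitely $A$-generated (by the union of the two generating sets), locally $\U$-finite (since a cyclic $\U$-submodule of $M_1\oplus M_2$ is contained in the sum of the projections to the two factors, each of which is finite dimensional), and projective (since $\Hom_{A\text{-}\U}(M_1\oplus M_2,-)=\Hom_{A\text{-}\U}(M_1,-)\oplus\Hom_{A\text{-}\U}(M_2,-)$ is a direct sum of exact functors).

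Next I would verify closure under extensions. Suppose
\[
0\lr M'\lr M\lr M''\lr 0
\]
is a short exact sequence in $A\text{-}\U\text{-}{\bf mod}$ with $M',M''\in\cP(A,\U)$. Since $M''$ is projective in $A\text{-}\U\text{-}{\bf mod}$, the sequence splits, yielding an isomorphism $M\cong M'\oplus M''$ in $A\text{-}\U\text{-}{\bf mod}$. By the additivity check just made, $M'\oplus M''$ is finitely $A$-generated, locally $\U$-finite, and projective, hence lies in $\cP(A,\U)$, as required.

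I do not expect any substantial obstacle: the only thing one might worry about is whether projectivity of $M''$ in the equivariant category really forces the sequence to split, but this is precisely the definition of a projective object in an abelian category applied to the identity map $M''\to M''$. Thus the proof reduces to bookkeeping and is indeed immediate, in agreement with the remark preceding the statement.
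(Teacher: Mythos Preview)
Your proposal is correct and is exactly what the paper has in mind: the paper gives no argument beyond the sentence ``The following fact is immediate from the definition of $\cP(A,\U)$,'' and your write-up simply unpacks this by exhibiting $\cP(A,\U)$ as a full additive subcategory of the abelian category $A$-$\U$-{\bf mod} closed under extensions (the extension splitting because $M''$ is projective). One minor redundancy: since the ambient category $A$-$\U$-{\bf mod} already consists of locally $\U$-finite modules by definition, you need not recheck local $\U$-finiteness for $M$; only finite $A$-generation and projectivity require comment.
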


The Quillen K-groups $K_i(\cP(A, \U))$ are therefore defined
for $\cP(A, \U)$, and the following definition makes sense.

\begin{definition}\label{equivKgps}
Let $A$ be a locally finite module algebra over the quantum group $\U$.
The $\U$-equivariant algebraic K-groups of $A$ are defined by
\[
K_i^\U(A): = K_i(\cP(A, \U)), \quad i=0, 1, \dots.
\]
\end{definition}

It follows from standard facts  \cite[Theorem 1, p.102]{Q} that the
fundamental group of $B(\QP(A, \U))$ is isomorphic to the
Grothendieck group of $\cP(A, \U)$. Hence the
$\U$-equivariant K-group $K_0^\U(A)$ is isomorphic to the
Grothendieck group of $\cP(A, \U)$.

\begin{remark}
The $\U$-equivariant algebraic K-groups $K_i^\U(A)$ of $A$ are
a generalisation to the quantum group setting of the
equivariant algebraic K-groups of reductive algebraic group actions
studied in \cite{BH1, BH2}.
\end{remark}

\begin{remark}\label{not-smash}
One may also consider the usual algebraic K-theory of the smash
product $A\#U$ (see Appendix \ref{smash} for the definition of a smash product).
This, however, is completely different from the equivariant K-theory of the $\U$-module algebra
$A$ introduced here. See Appendix \ref{smash} for more details.
\end{remark}

\section{Categories of equivariant modules}\label{equiv-modules}

To study the equivariant K-groups introduced in the last section, we
require some properties of various categories of equivariant modules.
Fix a quantum group $\U$ and a locally finite
module algebra $A$ over $\U$. As we have already declared in Remark \ref{type1},
all locally finite $\U$-modules and locally finite $\U$-module algebras considered
are assumed to be type-$(1, \dots, 1)$.

\begin{lemma} \label{HomA}
Let $M$ and $N$ be $A$-$\U$-modules.  Then there is a natural
$\U$-action on $\Hom_A(M, N)$ defined for any $x\in\U$ and
$f\in\Hom_A(M, N)$ by
\begin{eqnarray}\label{U-Hom}
(x f)(m)=\sum_{(x)}x_{(2)} f(S^{-1}(x_{(1)}) m), \quad \forall m\in
M.
\end{eqnarray}
\end{lemma}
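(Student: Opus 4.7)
The plan is to verify that the formula in \eqref{U-Hom} really does define a $\U$-module structure on $\Hom_A(M,N)$. Two things must be checked: first, that for every $x\in\U$ and $f\in\Hom_A(M,N)$, the map $xf$ is $A$-linear (and hence lies in $\Hom_A(M,N)$); second, that the assignment $x\otimes f \mapsto xf$ satisfies the module axioms $(xy)f=x(yf)$ and $1f=f$.

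The bulk of the work is the first step, which is a direct Sweedler-notation computation. Starting from $(xf)(am)=\sum_{(x)}x_{(2)}f\bigl(S^{-1}(x_{(1)})(am)\bigr)$, I would expand $S^{-1}(x_{(1)})(am)$ using the $A$-$\U$-compatibility on $M$ together with the standard identity $\Delta(S^{-1}(y))=\sum S^{-1}(y_{(2)})\otimes S^{-1}(y_{(1)})$; after reindexing by coassociativity this rewrites the inner action in the triple Sweedler form $\sum(S^{-1}(x_{(2)})\!\cdot\!a)(S^{-1}(x_{(1)})m)$, with the original $x_{(2)}$ becoming $x_{(3)}$. Pulling $f$ through $S^{-1}(x_{(2)})\!\cdot\!a$ by $A$-linearity, then re-expanding the outer action $x_{(3)}(\,\cdot\,)$ on $N$ by the $A$-$\U$-compatibility on $N$, converts the expression into a quadruple-Sweedler sum containing the middle segment $\sum x_{(3)}S^{-1}(x_{(2)})$. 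The antipode identity $\sum y_{(2)}S^{-1}(y_{(1)})=\epsilon(y)\mathbf 1$ applied to $y=x_{(2)}^{\text{triple}}$ collapses this middle segment to a scalar, and the counit identity $\sum\epsilon(y_{(1)})y_{(2)}=y$ then recovers the double Sweedler expression $\sum a\,x_{(2)}f(S^{-1}(x_{(1)})m)=a\cdot(xf)(m)$. This establishes $A$-linearity. The main obstacle is purely bookkeeping: keeping the levels of Sweedler indices straight across the three applications of the coproduct and using coassociativity at each reindexing; no substantive idea is required beyond the Hopf algebra axioms and the $A$-$\U$-compatibility diagram \eqref{compatibility} for $M$ and $N$.

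The second step is very short. Multiplicativity of $\Delta$ and anti-multiplicativity of $S^{-1}$ give
\[
((xy)f)(m)=\sum x_{(2)}y_{(2)}\,f\bigl(S^{-1}(y_{(1)})S^{-1}(x_{(1)})m\bigr),
\]
and expanding $(x(yf))(m)=\sum x_{(2)}(yf)(S^{-1}(x_{(1)})m)$ once more yields the same sum, so $(xy)f=x(yf)$. The identity $1f=f$ follows immediately from $\Delta(1)=1\otimes1$ and $S^{-1}(1)=1$. With both checks complete, the action is well defined.
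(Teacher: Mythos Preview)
Your proposal is correct and follows essentially the same approach as the paper: a direct Sweedler-notation verification of $A$-linearity using the compatibility conditions on $M$ and $N$ together with the antipode and counit identities, plus the routine check of the module axioms via multiplicativity of $\Delta$ and anti-multiplicativity of $S^{-1}$. The only cosmetic difference is the order of presentation---the paper first verifies the module axioms on all of $\Hom_\Bbbk(M,N)$ and then shows $\Hom_A(M,N)$ is a $\U$-submodule, whereas you check $A$-linearity first---but the computations are identical.
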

\begin{proof}
We first show that 
\eqref{U-Hom} defines a $\U$-module structure on $\Hom_\Bbbk(M, N)$.
For any $x, y \in \U$ and
$f\in\Hom_\Bbbk(M, N)$, we have
\[
\begin{aligned}
(y(x f))(m)&=\sum_{(y)} y_{(2)} (x f)(S^{-1}(y_{(1)}) m)\\
           &=\sum_{(y), (x)} y_{(2)} x_{(2)} f(S^{-1}(x_{(1)}) S^{-1}(y_{(1)}) m)
\end{aligned}
\]
for all $m\in M$. Using the facts that  for all $x,y\in\U$, $S(y x)
= S(x) S(y)$ and $\Delta(y x)=\sum_{(x), (y)} y_{(2)} x_{(2)}\otimes
y_{(1)} x_{(1)}$, we can cast the far right hand side into
\[
\sum_{(yx)} (y x)_{(2)} f(S^{-1}((y x)_{(1)}) m)=((yx) f)(m)
\]
Thus $\Hom_\Bbbk(M, N)$ is a $\U$-module.

Next we show that $\Hom_A(M, N)$ is a $\U$-submodule of $\Hom_\Bbbk(M, N)$. Let
$f\in\Hom_A(M, N)$, $a\in A$ and $x\in\U$. Then for all $m\in M$, we
have
\[
\begin{aligned}
(x f)(a m) &=\sum_{(x)} x_{(2)} f(S^{-1}(x_{(1)}) (a m))\\
           &=\sum_{(x)} x_{(3)} f((S^{-1}(x_{(2)})\cdot a) S^{-1}(x_{(1)}) m)\\
           &= \sum_{(x)} x_{(3)}((S^{-1}(x_{(2)})\cdot a) f( S^{-1}(x_{(1)}) m)),
\end{aligned}
\]
where the last step used the fact that $f$ is $A$-linear. The far
right hand side can be rewritten as $\sum_{(x)}
(x_{(3)}\cdot(S^{-1}(x_{(2)})\cdot a)) x_{(4)} f( S^{-1}(x_{(1)})
m)$. By using the defining property of the antipode, we can further
simplify it to obtain
\[
\begin{aligned}
&\sum_{(x)} \epsilon(x_{(2)}) a \left(x_{(3)}f( S^{-1}(x_{(1)}) m)\right)\\
&=\sum_{(x)} a \left(x_{(2)}f( S^{-1}(x_{(1)}) m)\right)\\
&= a (x f)(m).
\end{aligned}
\]
Hence $x f\in\Hom_A(M, N)$, as required.
\end{proof}

For any $M, N$ in
$\cM(A, \U)$, the $\U$-action on $\Hom_A(M, N)$ defined in Lemma
\ref{HomA} is semi-simple, and $\Hom_A(M, N)$ belongs to $\cM(A,
\U)$. To see this, we note that there exists a finite dimensional
$\U$-module $V$ which generates $M$ over $A$. Thus $\Hom_A(M, N)$ is
isomorphic to a submodule of $V^*\otimes_\Bbbk N$ as a $\U$-module,
which is obviously locally finite and thus semi-simple over $\U$.

\begin{proposition}\label{splitting}(Splitting lemma)
Consider a short exact sequence
\begin{eqnarray}\label{splitsequence}
0\longrightarrow M' \longrightarrow M
\stackrel{p}{\longrightarrow} M'' \longrightarrow 0
\end{eqnarray}
in $A$-$\U$-{\bf mod}, where $M''$ is an object of $\cM(A, U)$. If
the exact sequence is $A$-split, then it is also split as an exact
sequence of $A$-$\U$-modules.
\end{proposition}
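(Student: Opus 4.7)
The plan is to start with any $A$-linear splitting $s : M'' \to M$ of the sequence \eqref{splitsequence} (which exists by hypothesis) and to deform it into a $\U$-equivariant one by exploiting the semi-simplicity of the $\U$-action on a suitable Hom space.

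First I would apply Lemma \ref{HomA} to equip $\Hom_A(M'', M')$, $\Hom_A(M'', M)$ and $\Hom_A(M'', M'')$ with $\U$-module structures, and observe that post-composition with the $A$-$\U$-linear maps in \eqref{splitsequence} is automatically $\U$-linear with respect to the formula \eqref{U-Hom}, because $p$ commutes with the $\U$-action on its arguments. Since the sequence is $A$-split, the map $p_* : \Hom_A(M'', M) \to \Hom_A(M'', M'')$ is surjective with kernel $\Hom_A(M'', M')$, yielding a short exact sequence of $\U$-modules
\[
0 \longrightarrow \Hom_A(M'', M') \longrightarrow \Hom_A(M'', M) \stackrel{p_*}{\longrightarrow} \Hom_A(M'', M'') \longrightarrow 0.
\]

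Next I would verify that each of these Hom spaces is locally $\U$-finite. Because $M'' \in \cM(A, \U)$ is finitely $A$-generated and locally $\U$-finite, I can choose a finite-dimensional $\U$-submodule $V \subset M''$ whose $A$-span is $M''$. Restriction to $V$ then gives a $\U$-equivariant embedding $\Hom_A(M'', N) \hookrightarrow \Hom_\Bbbk(V, N) \cong V^* \otimes_\Bbbk N$ for each of $N = M', M, M''$. Since $V^*$ is finite-dimensional and $N$ is locally $\U$-finite, the tensor product $V^* \otimes_\Bbbk N$ is locally $\U$-finite; hence so is each Hom space. This is essentially the argument already given for objects of $\cM(A,\U)$, and it remains valid here because only $M''$ needs to be finitely $A$-generated.

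Finally, since locally $\U$-finite modules are semi-simple (the fact emphasised in the paper preceding the statement), taking $\U$-invariants is exact on the displayed sequence, so $p_*$ restricts to a surjection $\Hom_A(M'', M)^\U \twoheadrightarrow \Hom_A(M'', M'')^\U$. The identity map $\id_{M''}$ is $\U$-invariant, so I can pick a preimage $\tilde s \in \Hom_A(M'', M)^\U$ with $p \circ \tilde s = \id_{M''}$. A morphism in $\Hom_A(M'', M)$ is $\U$-invariant under \eqref{U-Hom} precisely when it is $\U$-linear (this is a direct check using the antipode axiom), so $\tilde s$ is an $A$-$\U$-linear splitting of \eqref{splitsequence}, as required.

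The only nontrivial step is the local $\U$-finiteness of $\Hom_A(M'', M)$, which is exactly where the hypothesis $M'' \in \cM(A, \U)$ (finite $A$-generation) is used; once this is in hand, semi-simplicity of locally finite $\U$-modules does all the remaining work.
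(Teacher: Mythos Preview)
Your proof is correct and follows essentially the same route as the paper's: apply the $\U$-action of Lemma~\ref{HomA} to the Hom spaces, use the $A$-splitting to get surjectivity of $p_*:\Hom_A(M'',M)\to\Hom_A(M'',M'')$, invoke local $\U$-finiteness (via the embedding into $V^*\otimes_\Bbbk N$, exactly as the paper does just before the proposition) to obtain semi-simplicity, and then lift $\id_{M''}$ to a $\U$-invariant, hence $A$-$\U$-linear, section. The only cosmetic difference is that you record the full short exact sequence of Hom spaces, whereas the paper works directly with the surjection $p_*$.
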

\begin{proof}
Since the above sequence is $A$-split, there is an  
$A$-module isomorphism 
$$M\overset{\sim}{\lr} M'\oplus M''.
$$ 
Therefore $\Hom_A(M'', M)\stackrel{p\circ -}{\longrightarrow} \Hom_A(M'',
M'')\longrightarrow 0$
is exact. This is an exact sequence of $\U$-modules as the map $p\circ-$ is
clearly $\U$-linear.  Since $M''$ is an object of $\cM(A, U)$ and $M$ is
locally $\U$-finite,  both hom-spaces are
semi-simple $\U$-modules. Hence we have the exact sequence of
$\U$-invariants $\Hom_A(M'', M)^\U\stackrel{p\circ
-}{\longrightarrow} \Hom_A(M'', M'')^\U\longrightarrow 0$.
Note that for any $A$-$U$-modules $N$ and $N'$,
an element $f$ of $\Hom_A(N, N')$ belongs to
$\Hom_\text{$A$-$\U$}(N, N')$ if and only if $x f = \epsilon(x) f$ for all
$x\in\U$. Therefore, we have the exact sequence
\[
\Hom_\text{$A$-$\U$}(M'', M)\stackrel{p\circ -}{\longrightarrow}
\Hom_\text{$A$-$\U$}(M'', M'')\longrightarrow 0.
\]
Now any element of the pre-image of $\id_{M''}$ splits the exact sequence
\eqref{splitsequence} of $A$-$\U$-modules.
\end{proof}

Given a finite dimensional $\U$-module $V$, we define the free
$A$-module $V_A=A\otimes_\Bbbk V$ with the obvious $A$-action (given
by left multiplication). We also define a $\U$-action on it by
\[
x(a\otimes v)=\sum_{(x)} x_{(1)}\cdot a \otimes x_{(2)} v
\]
for all $a\in A$, $v\in V$ and $x\in\U$. These two actions are
easily be shown to be compatible. Call $V_A$ a {\em free}
$A$-$\U$-module of finite rank. Since the module algebra $A$ is
locally $\U$-finite, $V_A$ is also locally $\U$-finite, and
hence belongs to $\cM(A, \U)$.

\begin{lemma}\label{free-module}
For each object $M$ of $\cM(A, \U)$, there exists an exact sequence $V_A\longrightarrow
M\longrightarrow 0$ in $\cM(A, \U)$, where $V_A$ is a free
$A$-$\U$-module of finite rank.
\end{lemma}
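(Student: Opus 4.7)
My plan is to build the surjection by lifting a finite $A$-generating set of $M$ to a finite-dimensional $\U$-submodule, and then using the evaluation map.

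First, because $M$ lies in $\cM(A,\U)$, choose a finite set $m_1,\dots,m_k\in M$ that generates $M$ as an $A$-module. Because $M$ is locally $\U$-finite, each cyclic submodule $\U m_i$ is finite-dimensional, so the sum
\[
V:=\U m_1+\U m_2+\cdots+\U m_k\subseteq M
\]
is a finite-dimensional $\U$-submodule of $M$, and by Remark \ref{type1} it lies in $\Umod$. Since $V$ already contains the generators $m_1,\dots,m_k$, it generates $M$ as an $A$-module.

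Next, form the free $A$-$\U$-module $V_A=A\otimes_\Bbbk V$ as defined just before the statement, and consider the evaluation map
\[
\varphi\colon V_A\longrightarrow M,\qquad \varphi(a\otimes v)=a\,v.
\]
The map is $A$-linear by construction. To see it is $\U$-linear I would unwind the two actions using the compatibility diagram \eqref{compatibility} for $M$: for any $x\in\U$, $a\in A$, $v\in V$,
\[
\varphi\bigl(x(a\otimes v)\bigr)=\sum_{(x)}(x_{(1)}\cdot a)(x_{(2)}v)=x(av)=x\,\varphi(a\otimes v),
\]
which is exactly the compatibility of the $A$- and $\U$-actions on $M$. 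Surjectivity is immediate since $\varphi(1\otimes m_i)=m_i$ and the $m_i$ generate $M$ as an $A$-module. Thus $V_A\xrightarrow{\varphi} M\to 0$ is the desired exact sequence.

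There is no real obstacle here; the only point to verify with any care is that the cokernel and the surjection live in $\cM(A,\U)$, but this is automatic: $V_A$ is locally $\U$-finite because $A$ is locally $\U$-finite and $V$ is finite-dimensional (so each element of $A\otimes V$ lies in a finite-dimensional $\U$-submodule of the form $W\otimes V$ with $W\subseteq A$ finite-dimensional and $\U$-stable), and the quotient $M$ is finitely $A$-generated and locally $\U$-finite by hypothesis. The type-$(1,\dots,1)$ condition is preserved throughout because both $A$ and $V$ are of this type.
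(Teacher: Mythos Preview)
Your proof is correct and follows essentially the same route as the paper: choose a finite $A$-generating set, let $V$ be the finite-dimensional $\U$-submodule it generates, and use the evaluation map $a\otimes v\mapsto av$ from $V_A$ onto $M$. You have simply spelled out the $\U$-linearity and local finiteness checks that the paper leaves implicit.
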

\begin{proof}
Given an object $M$ of $\cM(A, \U)$, we may choose any finite set of
generators for it as an $A$-module, and consider the $\U$-module $V$
generated by this set. Then $V$ is finite dimensional because
of the local $\U$-finiteness of $M$. We have the obvious surjective
$A$-$\U$-module map $V_A\longrightarrow M$, $a\otimes v\mapsto a v$.
Since $A$ is locally $\U$-finite, a free $A$-$\U$-module of finite
rank is locally $\U$-finite, thus the exact sequence
$V_A\longrightarrow M\longrightarrow 0$
is in $\cM(A, \U)$.
\end{proof}

As a corollary of Proposition \ref{splitting}, we have the following result.
\begin{corollary}\label{projective}
Let $A$ be a locally finite $\U$-module algebra.
For any object $P$ of $\cM(A,\U)$, the following conditions are equivalent :
\begin{enumerate}
\item $P$ is projective as an $A$-module;
\item $P$ is a projective object of $A$-$\U$-{\bf mod};
\item $P$ is a direct summand of some free $A$-$\U$-module
$V_A=A\otimes_\Bbbk V$ of finite
rank.
\end{enumerate}
\end{corollary}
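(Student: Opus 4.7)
The plan is to take condition (3) as the pivot and prove the four implications (3) $\Rightarrow$ (1), (3) $\Rightarrow$ (2), (1) $\Rightarrow$ (3), and (2) $\Rightarrow$ (3); the last two both begin by using Lemma \ref{free-module} to present $P$ as a quotient of a free $A$-$\U$-module of finite rank.

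For (3) $\Rightarrow$ (1) I would simply note that $V_A=A\otimes_\Bbbk V$ is free, hence projective, as an $A$-module, so any direct summand of $V_A$ is $A$-projective once the $\U$-action is forgotten. For (3) $\Rightarrow$ (2) the key claim is that the free $A$-$\U$-module $V_A$ is itself projective as an object of $A$-$\U$-{\bf mod}; granted this, its direct summands in $A$-$\U$-{\bf mod} are projective there too. To verify the claim I would take a surjection $M\twoheadrightarrow N$ in $A$-$\U$-{\bf mod} and a morphism $\varphi:V_A\to N$, use the universal property of the free $A$-module to identify $\varphi$ with a $\U$-linear map $\tilde\varphi:V\to N$, and thereby reduce the problem to producing a $\U$-linear lift $V\to M$ of $\tilde\varphi$. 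Since $V$ is finite-dimensional and locally $\U$-finite modules are semi-simple (as recalled earlier in the paper), I would decompose $V$ into $\U$-simples $V_i$ and, for each $V_i$, choose a $\U$-stable complement of $\ker(M\to N)$ inside the preimage of $\tilde\varphi(V_i)$ in $M$; this complement maps isomorphically onto $\tilde\varphi(V_i)$, yielding the required lift.

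The converse implications both begin with a surjection $\pi:V_A\to P\to 0$ in $\cM(A,\U)$ supplied by Lemma \ref{free-module}. Since $\ker\pi$ is a submodule of the locally $\U$-finite module $V_A$, this gives a short exact sequence $0\to\ker\pi\to V_A\to P\to 0$ in $A$-$\U$-{\bf mod} with $P\in\cM(A,\U)$. For (2) $\Rightarrow$ (3), the hypothesis that $P$ is projective in $A$-$\U$-{\bf mod} splits this sequence directly, exhibiting $P$ as a direct summand of $V_A$. For (1) $\Rightarrow$ (3), the $A$-projectivity of $P$ only gives a splitting after the $\U$-structure is forgotten; here I would invoke Proposition \ref{splitting} to promote this $A$-splitting to an $A$-$\U$-splitting, and $P$ again becomes a direct summand of $V_A$. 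This last step is the only non-formal point in the argument---the splitting lemma is the genuine tool that makes (1) $\Rightarrow$ (3) work, while the remaining implications are consequences of the universal property of free $A$-$\U$-modules together with the semi-simplicity of locally $\U$-finite $\U$-modules.
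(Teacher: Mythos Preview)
Your argument is correct, but the paper organises the proof as the shorter cycle $(1)\Rightarrow(2)\Rightarrow(3)\Rightarrow(1)$ rather than taking $(3)$ as a hub. The substantive difference lies in how $(1)$ is linked to the rest. The paper proves $(1)\Rightarrow(2)$ directly: $A$-projectivity of $P$ makes $\Hom_A(P,M)\to\Hom_A(P,N)$ surjective for any surjection $M\to N$ in $A$-$\U$-{\bf mod}, and since $P\in\cM(A,\U)$ these Hom spaces are semi-simple $\U$-modules, so passing to $\U$-invariants gives surjectivity of $\Hom_{A\text{-}\U}(P,M)\to\Hom_{A\text{-}\U}(P,N)$. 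This is precisely the mechanism inside Proposition~\ref{splitting}, so the paper in effect re-runs that argument rather than citing the proposition. You instead invoke Proposition~\ref{splitting} explicitly to obtain $(1)\Rightarrow(3)$, and pay for this by having to prove separately that $V_A$ is projective in $A$-$\U$-{\bf mod} for $(3)\Rightarrow(2)$; your lifting argument there (via the adjunction $\Hom_{A\text{-}\U}(V_A,N)\cong\Hom_\U(V,N)$ and semi-simplicity) is fine. The paper's route is a bit more economical, since it never needs projectivity of $V_A$ as a standalone fact; your route is more modular and makes the dependence on the splitting lemma explicit. Both rest on the same underlying idea: semi-simplicity of locally $\U$-finite modules promotes $A$-level lifting to $A$-$\U$-level lifting.
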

\begin{proof}
Assume that $P$ is $A$-projective. Then given any exact sequence
$M\to N\to 0$ in $A$-$\U$-{\bf mod}, we have the exact sequence
\[
\Hom_A(P, M)\longrightarrow\Hom_A(P, N)\longrightarrow 0
\]
of $\U$-modules. Since both $\Hom_A(P, M)$ and $\Hom_A(P, N)$ are
semi-simple as $\U$-modules, this leads to the following exact
sequence of $\Bbbk$-vector spaces
\[
\Hom_{A-\U}(P, M)\longrightarrow\Hom_{A-\U}(P, N)\longrightarrow 0.
\]
This proves that (1) implies (2).

Now assume that (2) holds. By Lemma \ref{free-module},
there exists a free $A$-$\U$-module $V_A=A\otimes_\Bbbk V$
of finite rank and an exact sequence
$V_A\stackrel{p}{\longrightarrow} P \longrightarrow 0$ of $A$-$\U$-modules.
It follows from (2) that the
identity map of $P$ factors through $p$. Hence the exact
sequence splits in $\cM(A, \U)$. This proves that
(2) implies (3).

It is evident that (3) implies (1), and the
proposition follows.
\end{proof}

Recall that an algebra $A$ is called {\em left regular} if it is left
Noetherian and every finitely generated left $A$-module has a finite
resolution by finitely generated projective left $A$-modules. 

\begin{proposition}\label{regular-alg}
Let $A$ be a locally finite module algebra over $\U$. Assume that
$A$ is left regular. Then every object $M$ in $\cM(A, \U)$
admits a finite $\cP(A, \U)$-resolution.
\end{proposition}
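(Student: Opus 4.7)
The plan is to build a resolution of $M$ by finite rank free $A$-$\U$-modules, using Lemma \ref{free-module} iteratively, and then invoke the left regularity hypothesis together with Corollary \ref{projective} to show that the process terminates after finitely many steps with a projective object of $\cM(A,\U)$.

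First I would choose, by Lemma \ref{free-module}, a free $A$-$\U$-module $V_0$ of finite rank together with a surjection $V_0 \stackrel{p_0}{\longrightarrow} M \longrightarrow 0$ in $\cM(A,\U)$. Let $K_0 = \ker p_0$. As a submodule of $V_0$ in the category $A$-$\U$-\textbf{mod}, $K_0$ is automatically locally $\U$-finite and of type $(1,\dots,1)$. Since $A$ is left Noetherian and $V_0$ is finitely $A$-generated, $K_0$ is finitely $A$-generated, so $K_0 \in \cM(A,\U)$. Applying Lemma \ref{free-module} again to $K_0$, I get $V_1 \to K_0 \to 0$, and splicing yields an exact sequence $V_1 \to V_0 \to M \to 0$ in $\cM(A,\U)$. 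Iterating this construction produces, for each $n\ge 0$, an exact sequence
\[
V_n \longrightarrow V_{n-1} \longrightarrow \cdots \longrightarrow V_0 \longrightarrow M \longrightarrow 0
\]
in $\cM(A,\U)$, with each $V_i$ a finite rank free $A$-$\U$-module.

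Next I invoke regularity to cut the resolution off. Since $A$ is left regular and $M$ is finitely $A$-generated, $M$ has finite projective dimension $d$ as an $A$-module. Each $V_i$ is free, hence $A$-projective, so by a standard syzygy argument the kernel $K_{d-1} = \ker(V_{d-1} \to V_{d-2})$ (taking $V_{-1}:= M$ when $d=0$) is projective as an $A$-module. By the construction above, $K_{d-1}$ also lies in $\cM(A,\U)$.

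The key step is now to upgrade $K_{d-1}$ from being $A$-projective to being projective in the equivariant sense: this is exactly the content of Corollary \ref{projective}, which asserts that for an object of $\cM(A,\U)$, being $A$-projective is equivalent to being a projective object of $A$-$\U$-\textbf{mod}. Therefore $K_{d-1} \in \cP(A,\U)$, and
\[
0 \longrightarrow K_{d-1} \longrightarrow V_{d-1} \longrightarrow \cdots \longrightarrow V_0 \longrightarrow M \longrightarrow 0
\]
is a finite $\cP(A,\U)$-resolution of $M$. The main (though mild) obstacle is precisely the transition from $A$-projectivity of the terminal syzygy to projectivity inside $\cP(A,\U)$; this is handled by Corollary \ref{projective}, which itself rests on the Splitting Lemma (Proposition \ref{splitting}) and hence on the semisimplicity of locally finite $\U$-modules.
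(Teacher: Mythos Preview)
Your proof is correct and follows essentially the same approach as the paper's: build a free $A$-$\U$-resolution in $\cM(A,\U)$ via Lemma \ref{free-module} and Noetherianity, use finite projective dimension to conclude that the $d$-th syzygy is $A$-projective, and then apply Corollary \ref{projective} to place it in $\cP(A,\U)$. Your write-up is in fact slightly more explicit than the paper's about why each kernel remains in $\cM(A,\U)$.
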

\begin{proof}
Let $M$ be an object in $\cM(A, \U)$. By Lemma \ref{free-module},
there exists an exact sequence $V_{0, A}
\stackrel{p_0}{\longrightarrow} M\longrightarrow 0$ in $\cM(A, \U)$,
where $V_{0, A}=A\otimes V_0$ is a free $A$-$\U$-module of finite
rank. Since $Ker(p_0)$ also belongs to $\cM(A, \U)$ because $A$ is
left Noetherian, we may apply the same considerations to it, and
inductively we obtain an $A$-free resolution $ \dots\longrightarrow
V_{1, A} \longrightarrow V_{0, A} \longrightarrow M \longrightarrow
0 $ in $\cM(A, \U)$ for $M$. Let $d$ be the projective dimension of
$M$, which is finite because $A$ is regular. It follows from
standard facts in homological algebra (see, e.g., \cite[Lemma
4.1.6]{W}) that the kernel $P$ of the map $V_{d-1, A}
\longrightarrow V_{d-2, A}$ is $A$-projective, hence belongs to
$\cP(A, \U)$ by Corollary \ref{projective}. Thus we arrive at the
$\cP(A, \U)$-resolution
\[
0\longrightarrow P\longrightarrow V_{d-1, A}\longrightarrow
\dots\longrightarrow V_{1, A} \longrightarrow V_{0, A}
\longrightarrow M \longrightarrow 0.
\]
This completes the proof of the proposition.
\end{proof}

For left regular module algebra, we have the following result,
which is an analogue of \cite[Theorem 2.3]{BH2}.
\begin{proposition}\label{KequalK}
Assume that  the locally finite $\U$-module algebra $A$ is left regular.
Then there exist the isomorphisms
\[
 K^\U_i(A)\stackrel{\sim}{\longrightarrow} K_i(\cM(A, \U)), \quad i=0, 1, 2,
\dots.
\]
\end{proposition}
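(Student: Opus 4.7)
The plan is to deduce the proposition from Quillen's Resolution Theorem (\cite[\S 4, Corollary 1]{Q}): if $\cP$ is a full exact subcategory of an exact category $\cM$, closed under extensions and under kernels of admissible epimorphisms in $\cM$, and if every object of $\cM$ admits a finite $\cP$-resolution, then the inclusion functor $\cP \hookrightarrow \cM$ induces isomorphisms $K_i(\cP) \overset{\sim}{\lr} K_i(\cM)$ for all $i\geq 0$. Applied to $\cP = \cP(A,\U)$ and $\cM = \cM(A,\U)$, this is precisely what is wanted, since the natural map $K^\U_i(A) \to K_i(\cM(A,\U))$ is by definition induced by this inclusion.

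First I would verify the finite-resolution hypothesis: by Proposition \ref{regular-alg}, the left regularity of $A$ ensures that every object of $\cM(A,\U)$ has a finite $\cP(A,\U)$-resolution. Next I would check that $\cP(A,\U)$ is closed under extensions in $\cM(A,\U)$. Given a short exact sequence
\[
0 \lr P' \lr P \lr P'' \lr 0
\]
in $\cM(A,\U)$ with $P', P'' \in \cP(A,\U)$, the $A$-projectivity of $P''$ (Corollary \ref{projective}) implies that the sequence is $A$-split, so $P \cong P' \oplus P''$ as $A$-modules; thus $P$ is finitely $A$-generated and $A$-projective, and being already an object of $A$-$\U$-{\bf mod}, it belongs to $\cP(A,\U)$ by Corollary \ref{projective}.

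The crucial step is closure under kernels of admissible epimorphisms. Suppose $0 \to P' \to P \to P'' \to 0$ is exact in $\cM(A,\U)$ with $P, P'' \in \cP(A,\U)$. Since $P''$ is $A$-projective, the sequence is $A$-split, so the splitting lemma (Proposition \ref{splitting}) applies and gives an $A$-$\U$-module splitting $P \cong P' \oplus P''$. Hence $P'$ is a direct summand of the $A$-$\U$-projective $P$, so $P'$ is itself $A$-projective; as a direct summand of $P$ it is also finitely $A$-generated and locally $\U$-finite, so $P' \in \cP(A,\U)$. With all three hypotheses verified, Quillen's Resolution Theorem yields the desired isomorphisms. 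The essential content of the argument — and the only nontrivial step, since the rest is bookkeeping — is the use of the splitting lemma to transfer the $A$-module closure property of projectives into the $A$-$\U$-equivariant category.
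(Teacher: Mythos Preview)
Your proof is correct and takes the same approach as the paper: apply Quillen's Resolution Theorem to the inclusion $\cP(A,\U)\subset\cM(A,\U)$, with the finite-resolution hypothesis supplied by Proposition \ref{regular-alg}. The paper's own proof is terser and does not spell out the closure verifications; note also that your appeal to the splitting lemma for kernel-closure, while harmless, is unnecessary---the $A$-module splitting already shows $P'$ is $A$-projective, and left Noetherianity of $A$ (implied by regularity, and the reason $\cM(A,\U)$ is abelian) guarantees $P'$ is finitely $A$-generated.
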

\begin{proof}
Since $A$ is left regular, it must be left Noetherian. Thus $\cM(A,
\U)$ is an abelian category, which has the natural exact structure
consisting of all the short exact sequences.  In view of Proposition
\ref{regular-alg}, the embedding $\cP(A, \U)\subset \cM(A, \U)$
satisfies the conditions of Quillen's Resolution Theorem
\cite[Theorem 4.6]{Sr}. The statement follows.
\end{proof}

%
%

\section{Equivariant K-theory of filtered module algebras}\label{filtered-algebras}

In this section we develop properties of the equivariant $K$-theory
of filtered module algebras over quantum groups. The main results 
here are Theorem \ref{graded} and Theorem \ref{filtered}, which are
quantum analogues of \cite[Theorem 3.2, Theorem 4.1]{BH2}.
The proofs of these theorems are adapted from
\cite[\S 3, \S 4]{BH2} and \cite[\S 6]{Q}.

\subsection{Graded module algebras}
Let $S=\oplus_{n=0}^\infty S_n$ be a graded,
locally finite $\U$-module algebra. We assume that the $\U$-action
preserves the grading of $S$, that is, each $S_n$ is stable under
the $\U$-action. Then $A:=S_0$, is a subalgebra of $S$. Set
$S_+=\oplus_{n>0} S_n$. Then $A$ may be identified with $S/S_+$. We
shall consider positively graded $\U$-equivariant $S$-modules, in
which the $\U$-action preserves the grading. We continue to assume
that all modules are locally $\U$-finite.

For (such) a graded $S$-$\U$-module $N=\oplus_{i=0}^\infty N_i$, we set
\[ T_i(N) = \Tor_i^S(A, N), \quad i\ge 0. \]
These spaces have a natural $S$-$\U$-module structure, which 
may be seen as follows. Take a sequence of graded $\U$-equivariant 
$S$-modules which
form a graded $S$-projective resolution for $N$;  this may be
done by taking, e.g., the usual normalised bar
resolution for $N$, which is a sequence of graded $\U$-equivariant 
$S$-modules as
can be easily seen by examining the explicit definition of the differential.
By Lemma \ref{tensor-functors}, the Tor-groups
$T_i(N)$ computed from such a resolution are $S$-$\U$-modules. In
particular, we have $T_0(N)=N/S_+ N$ with the natural
$S$-$\U$-module structure.

Let us introduce an increasing filtration $0=F_{-1}N\subset
F_0N\subset F_1N\subset\dots$ of $N$ by graded submodules,
by taking $F_pN=\sum_{i\le p} SN_i$.
Then $T_0(F_pN)_n=0$ if $n>p$ and $T_0(F_pN)_n=T_0(N)_n$ if
$n\le p$. There is also a natural $S$-$\U$-module surjection
\begin{eqnarray}\label{Tor-lemma}
S(-p)\otimes_A T_0(N)_p \longrightarrow F_pN/F_{p-1}N,
\end{eqnarray}
where $S(-p)$ is $S$ with the grading shifted by $p$, that is,
$S(-p)_n=S_{n-p}$.

\begin{remark}\label{Tor-rem}
By \cite[Lemma 1, p.117]{Q}, if $T_1(N)=0$ and $\Tor_i^A(S,
T_0(N))=0$ for all $i>0$, then the maps \eqref{Tor-lemma} are
isomorphisms.
\end{remark}

Let $\Mgr(S, \U)$ be the additive category of finitely
$S$-generated, positively graded, and locally $\U$-finite
$S$-$\U$-modules. If we assume that $S$ is left Noetherian,
then $\Mgr(S, \U)$ is abelian, and hence is an exact
category. Its $K$-groups are naturally $\Z[t]$-modules with $t$
acting as the translation functor $N\to N(-1)$.

If $S$ is $A$-flat, then every $A$-projective
resolution $P_{\bullet}\to V$ of $V$ in $\cM(A, \U)$ gives rise to an
$S$-projective resolution $S\otimes_A P_{\bullet}\to S\otimes_A V$.
Hence we have an exact functor $(S\otimes_A -): \cM(A, \U) \to
\Mgr(S, \U)$, which induces homomorphisms
\begin{eqnarray}\label{K-maps}
(S\otimes_A - )_*: K_i(\cM(A, \U)) \longrightarrow K_i(\Mgr(S, \U))
\end{eqnarray}
of $K$-groups.

\begin{theorem}\label{graded}
Assume that $S$ is left Noetherian and $A$-flat. If $A=S/S_+$ 
has finite projective dimension as an
$S$-module, then \eqref{K-maps}
extends to a $\Z[t]$-module isomorphism
\begin{eqnarray}\label{isomorphism}
\Z[t]\otimes_\Z K_i(\cM(A, \U)) \longrightarrow K_i(\Mgr(S, \U)),
\quad \text{for each $i$}.
\end{eqnarray}
\end{theorem}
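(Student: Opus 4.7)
The plan is to adapt Quillen's argument from \cite[\S 6, Theorem 7]{Q} to the equivariant graded setting, following the approach of \cite[Theorem 3.2]{BH2}. The exact functor $\sigma: \cM(A,\U) \lr \Mgr(S,\U)$ sending $V$ to $S\otimes_A V$ (placed in degree zero) is exact by the $A$-flatness of $S$; combined with the degree-shift endofunctor $t: N \mapsto N(-1)$, it induces the $\Z[t]$-linear map in \eqref{isomorphism}, with $t^p\otimes[V] \mapsto [S(-p)\otimes_A V]$.

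The core construction is the intermediate full subcategory $\cE \subset \Mgr(S,\U)$ of modules $N$ with $T_i(N)=0$ for all $i\ge 1$. Since $S$ is $A$-flat, the second vanishing required in Remark \ref{Tor-rem} holds automatically for every coefficient module, so $\cE$ is precisely the subcategory on which the maps \eqref{Tor-lemma} are isomorphisms. The Tor long exact sequence then shows that $\cE$ is closed under extensions and under kernels of admissible surjections in $\Mgr(S,\U)$, and that $T_0 = A\otimes_S -$ is exact on $\cE$. Together with the graded decomposition $T_0(N) = \bigoplus_p T_0(N)_p$, this yields mutually inverse equivalences of exact categories between $\cE$ and $\bigoplus_{p\ge 0} \cM(A,\U)$; passing to $K$-theory identifies $K_i(\cE)$ with $\Z[t]\otimes_\Z K_i(\cM(A,\U))$.

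It then suffices to show that the inclusion $\cE \subset \Mgr(S,\U)$ induces isomorphisms on $K$-groups, which I would deduce from Quillen's Resolution Theorem. Every $N \in \Mgr(S,\U)$ should admit a finite $\cE$-resolution, built iteratively by surjecting a graded-free module $S\otimes_\Bbbk V$ (produced by a graded refinement of Lemma \ref{free-module}) onto $N$ and repeating the process on the successive syzygies; such free modules are $S$-projective and hence lie in $\cE$. The hypothesis that $A = S/S_+$ has finite projective dimension $d$ as an $S$-module, combined with dimension-shifting for Tor, forces the $d$-th syzygy to lie in $\cE$ as well, producing a resolution of length at most $d+1$. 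Combining $K_i(\cE)\stackrel{\sim}{\lr} K_i(\Mgr(S,\U))$ with the identification of the previous paragraph then yields the theorem.

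The main obstacle I anticipate is carrying all three structures---the grading, the $\U$-equivariance, and Noetherian finiteness---through the resolution construction simultaneously. One needs a graded, equivariant analogue of Lemma \ref{free-module} producing surjections onto homogeneous generators that respect both the degree and the $\U$-action; the syzygies must remain finitely generated (via left Noetherianness of $S$), and the equivariant projectivity along the resolution is controlled by Corollary \ref{projective} together with Proposition \ref{splitting}. Once these compatibilities are verified, the dimension-shifting argument for Tor-vanishing is routine.
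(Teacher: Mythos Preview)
Your overall architecture matches the paper's: reduce to the subcategory $\cE = \cM_0$ of objects with $T_i(N)=0$ for $i\ge 1$, identify its $K$-theory with $\Z[t]\otimes_\Z K_i(\cM(A,\U))$, and then use the Resolution Theorem with the syzygy argument to compare $\cE$ with $\Mgr(S,\U)$. The Resolution-Theorem half is fine and essentially identical to the paper's (the paper filters through the chain $\cM_0\subset\cM_1\subset\cdots\subset\cM_d$, but your direct version works equally well).

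The gap is in your identification $K_i(\cE)\cong\Z[t]\otimes_\Z K_i(\cM(A,\U))$. You assert that the functors $b:(V_p)\mapsto\bigoplus_p S(-p)\otimes_A V_p$ and $c:N\mapsto (T_0(N)_p)_p$ are \emph{mutually inverse equivalences of exact categories}. This is false: even when every object of $\cE$ decomposes (as it does, via Remark \ref{Tor-rem}), the functor $b$ is not full. For $p>q$ one has $\Hom_{\Mgr(S,\U)}\bigl(S(-p)\otimes_A V,\,S(-q)\otimes_A W\bigr)\cong \Hom_A(V,\,S_{p-q}\otimes_A W)$, which is typically nonzero, whereas the product category $\bigoplus_p\cM(A,\U)$ has no morphisms between distinct components. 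Concretely, already for $S=\Bbbk[x]$ and trivial $\U$ the map ``multiplication by $x$'' gives a nonzero morphism $S(-1)\to S$ in $\cE$ that does not come from $\bigoplus_p\cM(A,\U)$. So $b\circ c$ is not naturally isomorphic to the identity functor on $\cE$.

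What rescues the argument---and what the paper actually does---is Quillen's additivity theorem for characteristic filtrations \cite[Corollary 2, p.107]{Q}. One works with the truncated subcategories $\cM_{0,n}=\{N\in\cE: F_nN=N\}$ and observes that each $N$ carries the canonical filtration $0=F_{-1}N\subset F_0N\subset\cdots\subset F_nN=N$ with $F_pN/F_{p-1}N\cong S(-p)\otimes_A T_0(N)_p$. Additivity gives $\sum_p (F_p/F_{p-1})_* = (F_n)_* = \id$ on $K$-groups, which is precisely the statement $(b\circ c)_*=\id$. Together with the easy $c_*\circ b_*=\id$, this yields the desired isomorphism after passing to the limit in $n$. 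You should replace the categorical-equivalence claim with this filtration/additivity step.
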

\begin{proof} We adapt the proofs of \cite[Theorem 6]{Q} and
\cite[Theorem 3.2]{BH2} to the present setting. Let $\cM_p$ denote
the full subcategory of $\Mgr(S, \U)$ with objects $N$ such that
$T_i(N)=0$ for all $i>p$. If the projective dimension of the
$S$-module $A$ is $d$, then
$\cM_0\subset\cM_1\subset\dots\subset\cM_d=\Mgr(S, \U)$. For $N$ in
$\cM_p$, Lemma \ref{free-module} gives a surjective $S$-$\U$-map
$V_S=S\otimes_{\Bbbk} V\twoheadrightarrow N$, $s\otimes v\mapsto s
v$, where $V$ is a finite dimensional $\U$-submodule of $N$ which
generates $N$ iself over $S$. Then the kernel $N'$ of the surjection
belongs to $\Mgr(S, \U)$ since $S$ is left Noetherian. Because $V_S$
is a free $S$-module, $T_i(V_S)=0$ for all $i>0$. Hence the long
exact sequence of $\Tor$ groups arising from the short exact
sequence $0\to N'\to V_S \to N\to 0$ yields $T_i(N) \cong
T_{i-1}(N')$ for all $i>0$. This then implies that $N'$ belongs to
$\cM_{p-1}$.

Therefore, the inclusion $\cM_p\subset\cM_{p+1}$ for every $p\ge 0$
satisfies the conditions of the Resolution Theorem
\cite[Theorem 4.6]{Sr}, hence is a homotopy equivalence. This leads
to the homotopy equivalence $\cM_0\subset \cM_d=\Mgr(S, \U)$, which
induces the isomorphisms
\begin{eqnarray}\label{isom-grd}
K_i(\cM_0) \stackrel{\sim}{\longrightarrow}K_i(\Mgr(S, \U)), \quad
\text{for all }  i=0, 1, \dots.
\end{eqnarray}

Let $V$ be an object in $\cM(A, \U)$, and let $P_\bullet\to V$ be an
$A$-projective resolution. Since $S$ is $A$-flat, $S\otimes_A
P_\bullet\to S\otimes_A V$ is an $S$-projective resolution, and
hence $T_i(S\otimes_A V) =0$, for all $i>0$. Thus for any $V$ in
$\cM(A, \U)$, $S\otimes_A V$ belongs to $\cM_0$. Therefore, if 
$\cM_{0, n}$ be the full subcategory of $\cM_0$ whose objects are
modules $M$ such that $M=F_n M$, then we have an exact functor
\[
\begin{aligned}
&b: \cM(A, \U)^{n+1} \longrightarrow \cM_{0, n}, \\
&(V_0, V_1, \dots, V_n) \mapsto \oplus_{p=0}^n S(-p)\otimes_A V_p.
\end{aligned}
\]
This induces homomorphisms
\[
b_*: K_i(\cM(A, \U))^{n+1} \longrightarrow K_i(\cM_{0, n}).
\]
Since $T_0$ is exact on $\cM_0$, we also have an exact functor
\[
\begin{aligned}
&c: \cM_{0, n}\longrightarrow \cM(A, \U)^{n+1} , \\
& M\mapsto (T_0(M)_0, T_0(M)_1, \dots, T_0(M)_n),
\end{aligned}
\]
and homomorphisms
\[
c_*: K_i(\cM_{0, n}) \longrightarrow K_i(\cM(A, \U))^{n+1}.
\]

Note that $c\circ b$ is equivalent to the identity functor, thus
$c_*\circ b_* = \id$.

On the other hand, any $M$ in $\cM_{0, n}$ has a filtration
\[
0=F_{-1} M\subset F_0 M \subset F_1 M \subset \dots \subset F_n M=M.
\]
Because of the $A$-flat nature of $S$, Remark \ref{Tor-rem} applies and
we have $\frac{F_p M}{F_{p-1}M} = S(-p)\otimes_A T_0(M)_p$. Clearly
each functor $\frac{F_p}{F_{p-1}} $ is exact. It follows from the
additivity of characteristic filtrations \cite[Corollary 2,
p.107]{Q} \cite[Corollary 4.4]{Sr} that $
\sum_{p=0}^n\left(\frac{F_p}{F_{p-1}}\right)_*=(F_n)_*=\id$. Now
observe that $(b\circ
c)_*=\sum_{p=0}^n\left(\frac{F_p}{F_{p-1}}\right)_*$, hence $
b_*\circ c_*= \id$.

Passing to the limit $n\to \infty$ we have the following isomorphism
for each $i$:
\[ \Z[t]\otimes_\Z K_i(\cM(A, \U)) \longrightarrow K_i(\cM_0). \]
Using the isomorphisms \eqref{isom-grd}, we arrive at the desired
result.
\end{proof}

\subsection{Filtered module algebras}
Let $S$ be a locally finite $\U$-module algebra.
Assume that $S$ has an ascending filtration $0\subset F_{-1}S \subset
F_0S\subset F_1S \subset \dots$ such that $1\in F_0S$, $\cup_p
F_pS=S$ and $F_pS F_qS\subset F_{p+q}S$. We assume that the
filtration is preserved by the $\U$-action. Let
\[
{\overline S} = gr(S) := \oplus_{p\ge 0} {\overline S}_p \quad
\text{with} \quad {\overline S}_p:=F_pS/F_{p-1}S,
\]
and set $A=F_0 S$ and ${\overline S}_+ = \oplus_{p>0}{\overline
S}_p$.

\begin{theorem}\label{filtered}
Assume that ${\overline S}$ is left Noetherian and $A$-flat. If $A$
($= {\overline S}/{\overline S}_+$) has a finite projective
${\overline S}$-resolution, then for $i= 0, 1,
2, \dots$ there exist isomorphisms
\begin{eqnarray}
K_i(\cM(A, \U)) \stackrel{\sim}{\longrightarrow} K_i(\cM(S, \U)).
\quad  
\end{eqnarray}
If furthermore $A$ is regular, then $S$ is regular and there exist
isomorphisms
\begin{eqnarray}
K^\U_i(A) \stackrel{\sim}{\longrightarrow} K^\U_i(S). \quad 
\end{eqnarray}
\end{theorem}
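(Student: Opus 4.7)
The strategy is to realize both K-theories as the cokernel of the same $(1-t)$-action on a common $\Z[t]$-module, obtained by applying Theorem \ref{graded} to an auxiliary graded object built from the filtration. The key device is the Rees algebra
\[
R \;:=\; \bigoplus_{p\ge 0} F_pS \cdot t^p \;\subset\; S[t],
\]
a graded subalgebra of the polynomial ring whose degree-one element $t$ (coming from $1\in F_0 S \subset F_1 S$) is central and a non-zero-divisor. Since the filtration $\{F_pS\}$ is $\U$-stable and $1\in A$ is $\U$-invariant, $R$ is naturally a graded locally finite $\U$-module algebra with $t$ a $\U$-invariant central element. The canonical identifications
\[
R_0 = A, \qquad R/tR \;\cong\; \overline{S}, \qquad R[t^{-1}] \;\cong\; S[t,t^{-1}],
\]
hold as graded $\U$-module algebras, so $R$ interpolates between $\overline{S}$ and $S$.

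Next, I would verify that $R$ itself satisfies the hypotheses of Theorem \ref{graded}: standard filtered-to-graded lifting (using the $t$-adic filtration on $R$ and its modules) shows that $R$ is $A$-flat and left Noetherian because $R/tR\cong\overline{S}$ is so, and that the finite projective $\overline{S}$-resolution of $A$ lifts to a finite projective $R$-resolution of $A=R/R_+$. Applying Theorem \ref{graded} to both $\overline{S}$ and to $R$ gives $\Z[t]$-module isomorphisms
\[
K_i(\Mgr(\overline{S},\U)) \;\cong\; \Z[t]\otimes_{\Z} K_i(\cM(A,\U)) \;\cong\; K_i(\Mgr(R,\U)).
\]

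To bridge $\Mgr(R,\U)$ and $\cM(S,\U)$, I would invoke Quillen's localization theorem with respect to the Serre subcategory $\cB\subset\Mgr(R,\U)$ of $t$-torsion objects. Devissage along powers of $t$ identifies $K_i(\cB)$ with $K_i(\Mgr(\overline{S},\U))$, while inverting the $\U$-invariant central degree-one element $t$ collapses the grading, so the Serre quotient $\Mgr(R,\U)/\cB$ is equivalent as an exact category to $\cM(S,\U)$. From the short exact sequence
\[
0\longrightarrow R(-1)\stackrel{\cdot t}{\longrightarrow} R\longrightarrow \overline{S}\longrightarrow 0
\]
in $\Mgr(R,\U)$, one reads off that the first map in the localization long exact sequence, $K_i(\Mgr(\overline{S},\U))\to K_i(\Mgr(R,\U))$, becomes multiplication by $1-t$ under the $\Z[t]$-identification above. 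Since $1-t$ acts injectively on $\Z[t]\otimes K_i(\cM(A,\U))$ with cokernel $K_i(\cM(A,\U))$, the long exact sequence breaks into split short exact sequences, yielding $K_i(\cM(S,\U))\cong K_i(\cM(A,\U))$ for each $i$.

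For the final claim about regularity, the hypotheses on $\overline{S}$ together with $A$ regular force $\overline{S}$ to be regular (left Noetherian by assumption, with finite global dimension inherited from $A$ via $A$-flatness and the finite projective $\overline{S}$-dimension of $A$). A good $\U$-stable filtration on any $M\in\cM(S,\U)$---for instance $F_pM = F_pS\cdot V$ for a finite-dimensional $\U$-submodule $V$ generating $M$---makes $\mathrm{gr}(M)$ finitely generated over $\overline{S}$, and a finite projective resolution of $\mathrm{gr}(M)$ in $\Mgr(\overline{S},\U)$ lifts through the filtration to a finite $\cP(S,\U)$-resolution of $M$, so $S$ is regular. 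Proposition \ref{KequalK} then applies to $A$ and $S$ both, giving $K^\U_i(A)\cong K_i(\cM(A,\U))\cong K_i(\cM(S,\U))\cong K^\U_i(S)$. The hardest step will be verifying Quillen's localization hypotheses and the exact-category equivalence $\Mgr(R,\U)/\cB\simeq\cM(S,\U)$ in the $\U$-equivariant setting, since devissage, inversion of $t$, and extraction of degree zero must all respect the $\U$-action; this is controlled by the $\U$-invariance of $t$ but requires careful categorical bookkeeping.
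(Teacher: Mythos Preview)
Your proposal is correct and follows essentially the same route as the paper. The paper uses the identical Rees-algebra construction (calling it $S'=\bigoplus_{p\ge 0}(F_pS)z^p$), verifies in Lemma~\ref{S-primed} that $S'$ inherits the Noetherian and $A$-flat properties and that $A$ has finite projective $S'$-dimension, packages the localisation/d\'evissage argument into Lemma~\ref{long-sequence}, and then computes $i_*$ as multiplication by $1-t$ via the same short exact sequence $0\to S'(-1)\to S'\to\overline{S}\to 0$; the only organisational difference is that for the regularity of $S$ the paper simply invokes \cite[Lemma~4, p.~120]{Q} rather than sketching the good-filtration lifting argument you outline.
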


\begin{remark}
If $\fg=0$ and $\U$ is generated by the identity, Theorem
\ref{filtered} reduces to a slightly weaker version of \cite[Theorem
7]{Q}. See also the question raised in \cite[p.118]{Q} (immediately
below \cite[Theorem 6]{Q}).
\end{remark}

In order to prove the theorem, we need some preliminaries.
Let $z$ be an indeterminate, and consider the graded
algebra
\[S' = \oplus_{p\ge 0} (F_pS)z^p,\]
where $z$ is central in $S'$ and has degree $1$. This is a
subalgebra of $S[z]$. We endow $S'$ with a $\U$-action 
by specifying that $z$ is $\U$-invariant. 
This turns $S'$ into a $\U$-module algebra. Let
$S'_+=\oplus_{p>0} (F_pS)z^p$, then $A=S'/S'_+$. Note also that
$\overline{S} = S'/zS'$.

The next result does not involve the $\U$-action.
\begin{lemma}\label{S-primed}
Assume that ${\overline S}$ is left Noetherian and $A$-flat.
\begin{enumerate}
\item Then $S'$ is left Noetherian and $A$-flat.

\item If it is further assumed that $A(=S/S_+)$ has a finite
projective ${\overline S}$-resolution, then $A(=S'/S'_+)$ also has a
finite projective $S'$-resolution.
\end{enumerate}
\end{lemma}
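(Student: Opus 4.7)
The plan is to treat the two parts separately, relying on the fact that $z \in S'$ is a central non-zero-divisor with $S'/zS' \cong \overline{S}$. The $\U$-action plays no role in this lemma, so all arguments are purely ring-theoretic.

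For part (1), I would first establish $A$-flatness of $S'$. Writing $S' = \bigoplus_p (F_pS)z^p$ as an $A$-module, it suffices to show each $F_pS$ is $A$-flat. The decomposition $\overline{S} = \bigoplus_p \overline{S}_p$ together with the hypothesized $A$-flatness of $\overline{S}$ implies that each $\overline{S}_p$ is $A$-flat (as a direct $A$-summand of a flat module), and a short induction on $p$ using the short exact sequences $0 \to F_{p-1}S \to F_pS \to \overline{S}_p \to 0$ and the base case $F_0S = A$ propagates flatness to every $F_pS$. For the left Noetherian claim, I would invoke the standard result that for a positively graded ring $S'$ with central non-zero-divisor $z \in S'_1$ such that $S'/zS' = \overline{S}$ is left Noetherian and the $z$-adic filtration is separated (here $\bigcap_n z^nS' = 0$ is immediate from $z^nS' = \bigoplus_{p \geq n}(F_{p-n}S)z^p$), the ring $S'$ itself is left Noetherian (cf.\ McConnell-Robson, \emph{Noncommutative Noetherian Rings}, Ch.~1).

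For part (2), the core observation is that the short exact sequence
\[
0 \longrightarrow S'(-1) \xrightarrow{\;\cdot z\;} S' \longrightarrow \overline{S} \longrightarrow 0
\]
is a length-$1$ projective $S'$-resolution of $\overline{S}$, so that every $\overline{S}$-projective module (being a graded direct summand of a graded free $\overline{S}$-module) has $S'$-projective dimension at most $1$. Given the assumed finite $\overline{S}$-projective resolution $0 \to P_d \to \cdots \to P_0 \to A \to 0$, one obtains the bound $\mathrm{pd}_{S'}(A) \leq d+1$ by induction on $d$: the base case $d=0$ is the bound just established, and the inductive step applies the standard inequality $\mathrm{pd}_{S'}(A) \leq \max(\mathrm{pd}_{S'}(\ker(P_0 \to A)) + 1,\, \mathrm{pd}_{S'}(P_0))$ to the short exact sequence $0 \to \ker(P_0 \to A) \to P_0 \to A \to 0$, combined with the inductive hypothesis applied to $\ker(P_0 \to A)$ (whose projective $\overline{S}$-dimension is $\leq d-1$). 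Since $S'$ is left Noetherian by (1) and $A = S'/S'_+$ is finitely generated as an $S'$-module, truncating a finitely generated free $S'$-resolution at the $(d+1)$-st syzygy produces the required finite projective $S'$-resolution.

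The main obstacle is the left Noetherian statement in part (1): the $A$-flatness and the projective-dimension computation in (2) are essentially formal, whereas the Noetherian assertion either requires a dedicated filtration-lifting argument (showing that finite generating sets of a left ideal of $\overline{S}$ lift to finite generating sets of a corresponding left ideal of $S'$ along the $z$-adic filtration) or a careful citation of the relevant Rees-algebra result.
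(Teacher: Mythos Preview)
Your proposal is correct and follows essentially the same route as the paper: the $A$-flatness argument is identical (induction on $p$ via the short exact sequences $0\to F_{p-1}S\to F_pS\to\overline{S}_p\to 0$), and your inductive bound in part~(2) is a direct unwinding of the Change of Rings inequality $\mathrm{pd}_{S'}(A)\le \mathrm{pd}_{\overline{S}}(A)+\mathrm{pd}_{S'}(\overline{S})$ that the paper simply cites from Weibel. For the Noetherian step the paper also passes to an associated graded ring and invokes the standard filtered-to-Noetherian lemma (there from Quillen rather than McConnell--Robson), so your concern about this being the ``main obstacle'' is unfounded---it is handled by exactly the kind of citation you suggest.
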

\begin{proof}
Filter $S'$ by letting $F_p S'$ consist of polynomials in $z$ with
coefficients in $F_p S$. Then the associated graded algebra of $S'$
is given by
\[
\rm{gr}(S')=\oplus_{p\ge 0}\frac{F_p S'}{F_{p-1}S'}
= \oplus_{p\ge 0} \overline{S}_p z^p.
\]
Since $\overline{S}$ is left Noetherian, so also is
$\overline{S}[z]$.  This then implies that $S'$ is left Noetherian
(see, e.g., \cite[Lemma 3.(i), p.119]{Q}).

Given that $\overline{S}$ is $A$-flat, so also is every graded
component $\overline{S}_p$, and in particular,
$F_0S=\overline{S}_0$. Corresponding to any short exact sequence
$0\longrightarrow V' \longrightarrow V \longrightarrow V''
\longrightarrow 0$ of $A$-modules, we have the commutative diagram
\[
\begin{array}{c c  c c c c c c c}
&              &      0           &
 &       0    &                    &    0            &     \\
 &                & \downarrow &
 & \downarrow &                        & \downarrow &  \\
0&\longrightarrow &F_{p-1}S\otimes_A V'&\longrightarrow&F_{p-1}S\otimes_A V
&\longrightarrow&F_{p-1}S\otimes_A V''&\longrightarrow & 0\\
 &     & \downarrow &     & \downarrow &     & \downarrow &  \\
0& \longrightarrow &F_{p}S\otimes_A V'&\longrightarrow & F_{p}S\otimes_A V
& \longrightarrow&F_{p}S\otimes_A V'' & \longrightarrow & 0\\
&    & \downarrow &   & \downarrow &    & \downarrow &  \\
0& \longrightarrow &\overline{S}_{p}\otimes_A V'& \longrightarrow
& \overline{S}_{p}\otimes_A V& \longrightarrow&\overline{S}_{p}
\otimes_A V''& \longrightarrow & 0\\
&    &\downarrow &    & \downarrow &     & \downarrow &  \\
&    &      0    &     &       0    &      &    0     &
\end{array},
\]
where the columns are obviously exact. Since the composition of the
maps in the middle row is zero, exactness of the top and bottom rows
will imply the exactness of the middle one. Thus by induction on
$p$, we can show that $F_p S$ is $A$-flat for all $p$. This then
immediately leads to the $A$-flatness of $S'$.

To prove the second part of the lemma, we note that
$\overline{S}=S'/zS'$ leads to $pd_{S'}(\overline{S})=1$, where
$pd_{R}(M)$ denotes the projective dimension of the left $R$-module
$M$. Thus by the Change Rings Theorem \cite[Theorem 4.3.1]{W},
$pd_{S'}(A) \le pd_{\overline{S}}(A)+pd_{S'}(\overline{S})<\infty$.
This completes the proof of the lemma.
\end{proof}

Since $S=S'/(1-z)S'$, it follows from Lemma \ref{S-primed}.(1) that
$S$ is left Noetherian. Hence $K_i(\cM(S, \U))$ are defined.

The proof of the following lemma requires both the Localisation
Theorem \cite[Theorem 5, p.105]{Q} and Devissage Theorem
\cite[Theorem 4, p.104]{Q}.

\begin{lemma}\label{long-sequence}
There exists the following long exact sequence of K-groups:
\[
\begin{aligned}
\cdots &\longrightarrow  K_1(\Mgr(\overline{S}, \U))
\longrightarrow K_1(\Mgr(S', \U))
\longrightarrow K_1(\cM(S, \U))\\
& \longrightarrow
K_0(\Mgr(\overline{S}, \U))\longrightarrow K_0(\Mgr(S', \U))
\longrightarrow K_0(\cM(S, \U))\longrightarrow 0.
\end{aligned}
\]
\end{lemma}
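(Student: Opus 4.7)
The plan is to apply Quillen's Localization Theorem \cite[Theorem 5, p.105]{Q} to a suitable Serre subcategory of $\Mgr(S', \U)$, and to identify the two flanking categories via Devissage and an explicit equivalence of categories. The key observation is that $z \in S'$ is central and $\U$-invariant, so multiplication by $z$ defines a morphism in $\Mgr(S', \U)$, and the ideals $zS'$ and $(1-z)S'$ are $\U$-stable.

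First I would introduce $\mathcal{T} \subset \Mgr(S', \U)$, the full subcategory of objects $M$ on which $z$ acts nilpotently. Since $S'$ is left Noetherian (Lemma \ref{S-primed}) and each such $M$ is finitely $S'$-generated, this is equivalent to $z^n M = 0$ for some $n \ge 0$, and $\mathcal{T}$ is a Serre subcategory of the abelian category $\Mgr(S', \U)$. The subcategory $\mathcal{T}_0 \subset \mathcal{T}$ of modules annihilated by $z$ coincides with $\Mgr(\overline{S}, \U)$, since $\overline{S} = S'/zS'$. Every $M \in \mathcal{T}$ admits the finite filtration $0 \subset z^{n-1}M \subset \cdots \subset zM \subset M$ with successive quotients in $\mathcal{T}_0$, and so Quillen's Devissage Theorem \cite[Theorem 4, p.104]{Q} yields isomorphisms $K_i(\Mgr(\overline{S}, \U)) \xrightarrow{\sim} K_i(\mathcal{T})$ for all $i \ge 0$.

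Next I would identify the Serre quotient $\Mgr(S', \U)/\mathcal{T}$ with $\cM(S, \U)$. Since $z$ is central, this quotient is equivalent to the category of graded, finitely generated, locally $\U$-finite modules over the Ore localization $S'[z^{-1}]$. Using $\cup_p F_p S = S$, the natural map gives an isomorphism $S'[z^{-1}] \cong S[z, z^{-1}]$ of graded $\U$-module algebras (with $z$ in degree $1$ and $\U$-invariant). On graded $S[z, z^{-1}]$-modules, $z$ acts invertibly; hence the degree-zero component functor $M \mapsto M_0$ is an exact equivalence onto $\cM(S, \U)$, with quasi-inverse sending $N \in \cM(S, \U)$ to $\bigoplus_{n \in \Z} N z^n$ with $z$ shifting the grading and with the $\U$-action inherited from $N$ (since $z$ is $\U$-invariant). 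Composing, one obtains $\Mgr(S', \U)/\mathcal{T} \simeq \cM(S, \U)$.

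Finally, Quillen's Localization Theorem applied to $\mathcal{T} \subset \Mgr(S', \U)$ provides the long exact sequence
\[
\cdots \to K_i(\mathcal{T}) \to K_i(\Mgr(S', \U)) \to K_i\bigl(\Mgr(S', \U)/\mathcal{T}\bigr) \to K_{i-1}(\mathcal{T}) \to \cdots,
\]
terminating with $K_0(\Mgr(S', \U)/\mathcal{T}) \to 0$; substituting the identifications from the two previous steps produces precisely the sequence in the statement. The main obstacle is the identification of $\Mgr(S', \U)/\mathcal{T}$ with $\cM(S, \U)$: one must verify that the Serre quotient by the $z$-torsion subcategory corresponds, at the level of modules, to inverting the central element $z$, and that the $\U$-equivariant structure together with the finite-generation and local $\U$-finiteness conditions are preserved under the degree-zero equivalence. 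Once this categorical equivalence is established, Devissage and Localization assemble the long exact sequence formally.
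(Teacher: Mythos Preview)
Your proposal is correct and follows essentially the same approach as the paper's proof: both introduce the Serre subcategory of $z$-power torsion modules in $\Mgr(S', \U)$, identify it with $\Mgr(\overline{S}, \U)$ via Devissage using the filtration by powers of $z$, identify the Serre quotient with $\cM(S, \U)$ by inverting $z$ and taking the degree-zero component, and then invoke Quillen's Localization Theorem. The only cosmetic difference is that you make the isomorphism $S'[z^{-1}] \cong S[z, z^{-1}]$ explicit, whereas the paper phrases the equivalence $\Mgr(S'[z^{-1}], \U) \simeq \cM(S, \U)$ directly and records the composite quotient functor as $j: M \mapsto M/(1-z)M$.
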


\begin{proof} Recall the crucial facts that $z$ is $\U$-invariant
and is also central in $S'$. Let $\cN$ be the full subcategory of
$\Mgr(S', \U)$ consisting of modules killed by some power of $z$.
This is a Serre subcategory, so we may define the quotient
category $\Mgr(S', \U)/\cN$. A more concrete way to view this
construction is as follows. Let $S'[z^{-1}]$ be the localisation of
$S'$ at $z^{-1}$. Then the localisation functor $\Mgr(S',
\U)\longrightarrow \Mgr(S'[z^{-1}], \U)$ annihilates precisely the
modules in $\cN$, and $\Mgr(S', \U)/\cN$ is equivalent to
$\Mgr(S'[z^{-1}], \U)$.

For any object $M$ in  $\Mgr(S'[z^{-1}], \U)$, $z^{-1}$ acts as an
isomorphism. Hence $M$ is uniquely determined by its degree $0$
component. This leads to an equivalence of categories
$\Mgr(S'[z^{-1}], \U)\cong \cM(S, \U)$. Denote by $j: \Mgr(S', \U)
\longrightarrow\cM(S, \U)$ the composition of this equivalence with
the localisation functor. Then $j: M\mapsto M/(1-z)M$ for all $M$ in
$\Mgr(S', \U)$.

Now using Quillen's Localisation Theorem
\cite[Theorem 5, p.105]{Q}, we obtain 
(cf. {\it op. cit.} p. 115) a long exact sequence of K-groups:
$$
\begin{aligned}
\cdots \longrightarrow K_i(\cN)&\longrightarrow K_i(\Mgr(S', \U))
\longrightarrow K_i(\cM(S, \U))\longrightarrow
K_{i-1}(\cN)\longrightarrow\cdots\\ &\cdots \longrightarrow
 K_0(\cM(S, \U)) \longrightarrow 0.\\
\end{aligned}
$$

Since $\overline{S} = S'/zS'$, we see that $\Mgr(\overline{S}, \U)$
is a full subcategory of $\Mgr(S', \U)$ with objects the modules
annihilated by $z$. Hence we have an inclusion $\Mgr(\overline{S},
\U)\subset \cN$. As any object $N$ of $\cN$ is annihilated by $z^k$
for some $k$, we have the filtration $0=z^k N \subset z^{k-1} N
\subset \dots \subset z N \subset N$ with $\frac{z^i N}{z^{i-1} N}$
obviously annihilated by $z$ for each $i$. Hence the Devissage
Theorem \cite[Theorem 4, p.112]{Q} applies to this situation,
inducing isomorphisms of K-groups $K_i(\cN) \cong
K_i(\Mgr(\overline{S}, \U))$ for all $i\ge 0$. The 
lemma follows.
\end{proof}

\begin{remark}\label{i-j-maps}
Let $i: \Mgr(\overline{S}, \U) \longrightarrow \Mgr(S', \U)$ denote
the composition of the inclusions  $\Mgr(\overline{S}, \U)\subset
\cN\subset\Mgr(S', \U)$. From the proof of the lemma we see that
maps beside the connecting homomorphism in the long exact sequence
are $i_*$ and $j_*$ as indicated below
\[\cdots \longrightarrow K_i(\Mgr(\overline{S},
\U))\stackrel{i_*}{\longrightarrow} K_i(\Mgr(S', \U))
\stackrel{j_*}{\longrightarrow} K_i(\cM(S,
\U))\longrightarrow\dots.\]
\end{remark}

\begin{proof}[Proof of Theorem \ref{filtered}]
Since $\overline{S}$ satisfies the conditions of Theorem
\ref{graded}, by Lemma \ref{S-primed}, $S'$ also satisfies the
conditions of the theorem. Therefore, we have the $\Z[t]$-module
isomorphisms
\begin{eqnarray}\label{K-is}
\begin{aligned}
\Z[t]\otimes_\Z K_i(\cM(A, \U)) \longrightarrow
K_i(\Mgr(\overline{S}, \U)),
&\quad& 1\otimes g \mapsto (\overline{S}\otimes_A -)_* g, \\
\Z[t]\otimes_\Z K_i(\cM(A, \U)) \longrightarrow K_i(\Mgr(S', \U)),
&\quad& 1\otimes g \mapsto (S'\otimes_A -)_* g.
\end{aligned}
\end{eqnarray}
Let us now describe the map $i_*$ in Remark \ref{i-j-maps} more
explicitly by finding the map $\delta$ which renders the following
diagram commutative:
\begin{center}
\begin{picture}(280, 80)(0, 60)
 \put(20, 120){$K_i(\Mgr(\overline{S}, \U))$}
 \put(110, 123){\vector(1,0){50}}
 \put(120, 130){$i_*$}
 \put(175, 120){$ K_i(\Mgr(S', \U)) $}

 \put(50, 80){\vector(0, 1){35}}
 \put(35, 95){$\cong$}
 \put(205, 80){\vector(0, 1){35}}
 \put(210, 95){$\cong$}

 \put(0, 65){$\Z[t]\otimes_\Z K_i(\cM(A, \U))$}
 \put(110,70){\vector(1,0){50}}
 \put(130,75){$\delta$}
 \put(175, 65){$\Z[t]\otimes_\Z K_i(\cM(A, \U))$.}
\end{picture}
\end{center}
Here the vertical isomorphisms are given by \eqref{K-is}. For every
$M$ in $\cM(A, \U)$, we have an exact sequence
\[ 0\longrightarrow S'(-1)\otimes_A M
\stackrel{z}{\longrightarrow} S'\otimes_A M \longrightarrow
\overline{S}\otimes_A M\longrightarrow 0
\]
in $\Mgr(S', \U)$, where $\overline{S}\otimes_A M$ is in
$\Mgr(\overline{S}, \U)$ but is regarded as an object of $\Mgr(S', \U)$ via
the inclusion $i$. Therefore the composition
$i\circ(\overline{S}\otimes_A -)$ of functors $\cM(A, \U)
\stackrel{\overline{S}\otimes_A -}{\longrightarrow}\cM(\overline{S},
\U)\stackrel{i}{\longrightarrow}\Mgr(S', \U)$ fits into an exact
sequence of functors
\[
0\longrightarrow S'(-1)\otimes_A - \longrightarrow S'\otimes_A -
\longrightarrow i\circ(\overline{S}\otimes_A -)\longrightarrow 0
\]
from $\cM(A, \U)$ to $\Mgr(S', \U)$. By Corollary 1 of Theorem 2 in
 \cite[p.106]{Q} , we have
\[
i_*\circ(\overline{S}\otimes_A -)_* = (S'\otimes_A - )_*
-(S'(-1)\otimes_A -)_* = (1-t)(S'\otimes_A - )_*.
\]
From this formula it is evident that the map $\delta$ is
multiplication by $1-t$, which is injective with cokernel
$K_i(\cM(A, \U))$.

Therefore  $i_*$ is injective with cokernel isomorphic to
$K_i(\cM(A, \U))$. Using this information in  the long exact
sequence of Lemma \ref{long-sequence}, we deduce
that the connecting morphism is zero, and $K_i(\cM(S,
\U))$ is isomorphic to the cokernel of $i_*$. Hence 
the composition of functors $\cM(A, \U) \longrightarrow
\Mgr(S', \U) \stackrel{j}{\longrightarrow} \cM(S, \U)$,  $M\mapsto
S'\otimes_A M\mapsto S\otimes_A M$ (where $S=S'/(1-z)S'$) induces an
isomorphism
\[
K_i(\cM(A, \U)) \longrightarrow K_i(\Mgr(S', \U))
\stackrel{j_*}{\longrightarrow} K_i(\cM(S, \U)).
\]
This proves the first assertion of the theorem.

Given the conditions that $\overline{S}$ is left Noetherian
and $A$ has finite projective dimension as a left
$\overline{S}$-module, \cite[Lemma 4, p.120]{Q} applies and hence
the regularity of $A$ implies the regularity of $S$. Thus it follows
from Proposition \ref{KequalK} that $K_i(\cM(A, \U))= K^\U_i(A)$ and
$K_i(\cM(S, \U))= K^\U_i(S)$ for all $i$. Now the second part of the
theorem immediately follows from the first part.
\end{proof}

\section{Quantum symmetric algebras}\label{quantum-symmalg}

We now apply results from Section
\ref{filtered-algebras} to compute the equivariant K-groups of a
class of module algebras over quantum groups. We shall refer to
these module algebras as {\em quantum symmetric algebras}; these are
quadratic algebras of Koszul type naturally arising from the
representation theory of quantum groups. The quantised coordinate
ring of affine $n$-space is an example. See \cite{LZZ, Z, BZ}
for other examples.

\subsection{Equivariant K-theory of quantum symmetric algebras}

Let $V$ be a finite dimensional vector space over a field $\Bbbk$,
and denote by $T(V)$ the tensor algebra
over $V$. Given a subset $I$ of $V\otimes_\Bbbk V$, we denote by
$\langle I \rangle$ the two-sided ideal of $T(V)$ generated by $I$.
Define the {\em quadratic algebra}
\[A:=T(V)/\langle I \rangle. \]
We shall also use the notation $\Bbbk\{V, I\}$ for
$A$ to indicate the generating vector space $V$ and
the defining relations of the algebra explicitly. The algebra $A$ is naturally
$\Z_+$-graded since $\langle I \rangle$ is. We have
$ A=\bigoplus_{i=0}^\infty A_i, $
with $A_0=\Bbbk$ and $A_1=V$.

We shall say that a quadratic algebra $A=\Bbbk\{V, I\}$ is of  {\em
PBW type} if there exists a basis $\{v_i\mid i=1, 2, \dots, d\}$ of
$V$ such that the elements $v^{\bf a} := v_1^{a_1} v_2^{a_2}\cdots
v_d^{a_d}$, with ${\bf a}:=(a_1, a_2, \dots, a_d)\in \Z_+^d$, form a
basis (called the PBW basis) of $A$.

Let $\Bbbk$ be the field $\C(q)$.  


\begin{lemma}\label{TV-I}
Let $V$ be a finite dimensional module of type $(1, \dots, 1)$ over
a quantum group $\U$. Let $I\subset V\otimes_\Bbbk V$ be a
$\U$-submodule. Then the quadratic algebra $\Bbbk\{V,
I\}=T(V)/\langle I \rangle$ is a 
$\U$-module algebra.
\end{lemma}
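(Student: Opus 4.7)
The plan is to equip $T(V)$ with a $\U$-module algebra structure in the standard way and then show that the ideal $\langle I\rangle$ is stable under $\U$, so the quotient inherits the structure.

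First I would define the $\U$-action on $T(V)=\bigoplus_{n\ge 0} V^{\otimes n}$ by letting $\U$ act on $V^{\otimes n}$ via the iterated coproduct $\Delta^{(n-1)}:\U\to\U^{\otimes n}$. Since $V$ is type $(1,\dots,1)$ and this property is closed under tensor products in the braided category $\Umod$, every $V^{\otimes n}$ is a locally finite $\U$-module of type $(1,\dots,1)$, hence so is $T(V)$. Compatibility with the algebra structure is the statement that concatenation $V^{\otimes m}\otimes V^{\otimes n}\to V^{\otimes(m+n)}$ is $\U$-linear, which is exactly coassociativity of $\Delta$; the unit map $\Bbbk\to T(V)$ is $\U$-linear since $\Bbbk$ carries the trivial action given by $\epsilon$. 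Thus $T(V)$ is a $\U$-module algebra.

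Next I would verify that $\langle I\rangle$ is a $\U$-submodule of $T(V)$. A typical spanning element of $\langle I\rangle$ has the form $a\cdot i\cdot b$ with $a,b\in T(V)$ and $i\in I$, and, because $T(V)$ is already a $\U$-module algebra, iterating the defining identity $x\cdot(uv)=\sum_{(x)}(x_{(1)}\cdot u)(x_{(2)}\cdot v)$ yields
\[
x\cdot(a\, i\, b)=\sum_{(x)}(x_{(1)}\cdot a)\,(x_{(2)}\cdot i)\,(x_{(3)}\cdot b).
\]
Since $I$ is a $\U$-submodule of $V\otimes V$, one has $x_{(2)}\cdot i\in I$, so the right-hand side lies in $\langle I\rangle$. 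Hence $\langle I\rangle$ is both a two-sided ideal and a $\U$-submodule.

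It follows that the quotient $A=T(V)/\langle I\rangle$ inherits both a $\U$-module structure and an algebra structure, and the compatibility axioms pass to the quotient. Local $\U$-finiteness and the type $(1,\dots,1)$ property descend from $T(V)$. The only point requiring care is the handling of the iterated coproduct when acting on products of more than two factors, but this is purely formal from the Hopf algebra axioms; there is no substantive obstacle to overcome.
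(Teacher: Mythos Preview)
Your proof is correct and follows essentially the same route as the paper's: equip $T(V)$ with its natural $\U$-module algebra structure via the iterated coproduct, observe that $\langle I\rangle$ is a $\U$-submodule because $I$ is, and pass to the quotient. The paper's argument is terser but identical in substance; your version simply spells out the verification that $\langle I\rangle$ is $\U$-stable and adds the (contextually relevant but not strictly required by the lemma statement) remarks on local finiteness and type.
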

\begin{proof}
The tensor algebra $T(V)$ has a natural $\U$-module algebra
structure, with the $\U$-action defined by using the
co-multiplication. Since $I$ is a $\U$-submodule of $V\otimes V$, so
also is the two-sided ideal $\langle I\rangle$. Hence
$A=T(V)/\langle I\rangle$ is a $\U$-module algebra.
\end{proof}

\begin{definition}
We call a $\U$-module algebra $A=\Bbbk\{V, I\}$ 
of the type defined in Lemma
\ref{TV-I} a {\em quantum symmetric algebra} of the finite
dimensional $\U$-module $V$ if it admits a PBW basis.
\end{definition}

If $A=\Bbbk\{V, I\}$ admits a PBW basis, it is sometimes referred to as
`flat'. 

The next theorem is our main result concerning the equivariant K-theory of 
quantum symmetric algebras. 

\begin{theorem}\label{q-symmalg}
Let $A=\Bbbk\{V, I\}$ be a quantum symmetric algebra of a finite
dimensional module $V$ over the quantum group $\U$. Assume that $A$
is  left  Noetherian, then
\[
K^\U_i(A)=K_i(\Umod), \quad \text{for all $i=0, 1, \dots$},
\]
where $\Umod$ is the category of finite dimensional left
$\U$-modules of type-$(1, \dots, 1)$.
\end{theorem}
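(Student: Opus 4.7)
The plan is to apply Theorem \ref{filtered} to $A$ equipped with its natural grading filtration, identifying the role of ``$A$'' in that theorem with $\Bbbk$ and the role of ``$S$'' with our quantum symmetric algebra, and then to compute $K^\U_i(\Bbbk)$ directly. First I endow $A$ with the ascending filtration $F_p A := \bigoplus_{i\le p} A_i$ coming from the quadratic grading. Each $A_i$ is a $\U$-submodule (the grading on $T(V)$ is $\U$-stable because $V$ is a $\U$-module and the coproduct respects tensor degree, and this passes to the quotient since $I \subset V\otimes V$ is a $\U$-submodule), so the filtration is $\U$-preserving. The associated graded algebra coincides with $A$ itself, and its zeroth piece is $F_0 A = A_0 = \Bbbk$ carrying the trivial $\U$-action.

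Next I would verify the hypotheses of Theorem \ref{filtered}: (i) the associated graded $A$ is left Noetherian by assumption; (ii) $A$ is flat over $\Bbbk$, which is automatic since $\Bbbk$ is a field; (iii) $\Bbbk = A/A_+$ admits a finite projective resolution as a left $A$-module; and (iv) $\Bbbk$ is regular, trivially. The main obstacle is (iii): this is precisely the point where the Koszul theory developed in Section \ref{Koszul-complex} is needed. The PBW assumption on $A = \Bbbk\{V, I\}$ guarantees the ``flatness'' of the quadratic datum $(V, I)$, which in turn produces an explicit Koszul-type resolution
\[
0 \longrightarrow A \otimes_\Bbbk K_n \longrightarrow \cdots \longrightarrow A \otimes_\Bbbk K_1 \longrightarrow A \longrightarrow \Bbbk \longrightarrow 0
\]
of finite length $n \le \dim V$, with each $K_i$ a finite-dimensional $\U$-submodule of $V^{\otimes i}$ (coming from the Koszul dual). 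Since each $A \otimes_\Bbbk K_i$ is a free $A$-$\U$-module in the sense of Section \ref{equiv-modules}, this is the required projective resolution.

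With the hypotheses of Theorem \ref{filtered} in hand, I obtain the regularity of $A$ together with the isomorphism
\[
K^\U_i(\Bbbk) \stackrel{\sim}{\longrightarrow} K^\U_i(A), \qquad i = 0, 1, 2, \dots.
\]
It remains to identify $K^\U_i(\Bbbk)$ with $K_i(\Umod)$. When the module algebra is $\Bbbk$ with trivial $\U$-action, the compatibility diagram \eqref{compatibility} collapses to the statement that the $\U$-action is an ordinary module action, and every finitely $\Bbbk$-generated module is finite dimensional and automatically projective; hence $\cP(\Bbbk, \U)$ is literally the category $\Umod$ (restricted to type-$(1,\dots,1)$). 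This gives $K^\U_i(\Bbbk) = K_i(\Umod)$ and hence the theorem. The essential technical input, and the step I expect to require most care, is thus the construction and proved exactness of the equivariant Koszul resolution of $\Bbbk$ over a PBW quantum symmetric algebra $A$, which depends on the interplay between the PBW property and Koszul duality as developed in \cite{PP} and adapted $\U$-equivariantly in Section \ref{Koszul-complex}.
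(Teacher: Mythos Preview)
Your proposal is correct and follows essentially the same route as the paper: both invoke the filtered/graded K-theory machinery of Section~\ref{filtered-algebras} on the natural grading of $A$, use the Koszul resolution (Theorem~\ref{KoszulResolution}) to verify that $\Bbbk = A/A_+$ has finite projective dimension over $A$, and finish by identifying $\cP(\Bbbk,\U)$ with $\Umod$. The only cosmetic difference is that the paper first establishes regularity of $A$ separately via Lemma~\ref{regular-qsymm} and Proposition~\ref{KequalK}, whereas you let the second clause of Theorem~\ref{filtered} deliver regularity and the isomorphism $K^\U_i(\Bbbk)\cong K^\U_i(A)$ in one stroke; note also that Theorem~\ref{filtered} only requires an ordinary (not equivariant) finite projective resolution of $\Bbbk$, so your emphasis on the $\U$-structure of the Koszul terms, while correct, is slightly more than needed.
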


We shall prove this result using Theorem \ref{filtered}
of Section \ref{q-symmalg-proof}.
In order to do that, we need some results from
the theory of Koszul algebras, which we now discuss.

\subsection{Quadratic algebras and Koszul complexes}\label{Koszul-complex}\label{PBW}

In this subsection, $\Bbbk$ may be any field. Let $V$ be a finite
dimensional vector space. Given a subspace $I$ of $V\otimes_\Bbbk
V$, we have the corresponding quadratic algebra $A=\Bbbk\{V, I\}$.
Let $V^*$ be the dual vector space of $V$, and define
\[
I^\bot:=\{\alpha\in V^*\otimes_\Bbbk V^* \mid \alpha(w)=0 \text{ for
all $w\in I$}\}.
\]
This definition implicitly uses the canonical isomorphism 
$(V\otimes_\Bbbk V)^*\cong V^*\otimes_\Bbbk V^*$. Henceforth 
$\otimes$ will denote $\otimes_\Bbbk$. Let $\langle I^\bot \rangle$ be
the two-sided ideal of the tensor algebra $T(V^*)$ over $V^*$
generated by $I^\bot$. We may define the quadratic algebra
\[ A^!:=T(V^*)/\langle I^\bot \rangle, \]
which is referred to as the {\em dual quadratic algebra} of $A$. We endow
$T(V^*)$ with a $\Z_-:=-\Z_+$ grading with $V^*$ having degree $-1$. Then
$\langle I^\bot \rangle$ is a two-sided graded ideal, and $A^!$ is
$\Z_-$-graded with $V^*$ having degree $-1$.

Let $A=\Bbbk\{V, I\}$ and $A^!=\Bbbk\{V^*, I^\bot\}$ be dual
quadratic algebras. Then $A^!\otimes A$ has a natural algebra
structure such that the subalgebras $A$ and $A^!$ commute.
Let $z$ be the image of the identity element of  $\Hom_\Bbbk(V,
V)$ in $V^*\otimes V$ under the natural isomorphism, and let $e_A$
be its image in $A^!\otimes A$.
It is easily verified that $e_A^2=0$.

We regard $A$ as a right $A$-module, and $A^!$ as a left
$A^!$-module. Then the graded dual ${A^!}^* = \bigoplus_{i\in \Z_+}
{A^!}^*_i$ of $A^!$ with ${A^!}^*_i=(A^!_{-i})^*$ has a natural
right $A^!$-module structure. Hence ${A^!}^*\otimes A$ is a right
$A^!\otimes A$-module. The action of $e_A$ defines a differential on
${A^!}^*\otimes A$, yielding the Koszul complex of $A$:
\begin{eqnarray}\label{resolution}
\cdots \longrightarrow {A^!_2}^* \otimes A  \longrightarrow{A^!_1}^*
\otimes A \longrightarrow A.
\end{eqnarray}

\begin{remark}
We may also regard $A$ as a left $A$-module and $A^!$ as a right
$A^!$-module. Then we have a Koszul complex of left $A$-modules:
\begin{eqnarray}\label{resolution-2}
\cdots \longrightarrow   A \otimes {A^!_2}^* \longrightarrow A
\otimes {A^!_1}^* \longrightarrow A,
\end{eqnarray}
where the differential is $\tilde{e}_A=\sum_{i=1}^d v_i\otimes
\bar{v}_i$.
\end{remark}

For any pair of graded left $A$-modules $M$ and $N$, one may compute
the extension spaces $\Ext^\bullet(M, N):=\bigoplus_i \Ext^i_A(M,
N)$ defined as the right derived functor of the graded homomorphism
functor $\Hom_A(M, N)$. Under the Yoneda product,
$\Ext^\bullet(\Bbbk, \Bbbk)$ forms a graded algebra.
A quadratic algebra $A=\Bbbk\{V, I\}$ is called {\em Koszul} if
$\Ext^\bullet(\Bbbk, \Bbbk)\cong A^!$ as graded algebras.
If $A$ is Koszul, so is also $A^!$.

A key property of a Koszul algebra is that the Koszul complex
\eqref{resolution} of $A$ is a graded free resolution of the base
field $\Bbbk$ regarded as a right $A$-module; that is, the 
complex
\begin{eqnarray}
\cdots \longrightarrow {A^!_2}^* \otimes A  \longrightarrow{A^!_1}^*
\otimes A \longrightarrow A\stackrel{\epsilon}{\longrightarrow}
\Bbbk \longrightarrow 0
\end{eqnarray}
is exact. The map $\epsilon$ is the augmentation $A\longrightarrow
A/A_+$, where $A_+=\bigoplus_{i>0}A_i$. For the proof of this fact,
see e.g., \cite[Corollary II.3.2]{PP}. Similarly,
\eqref{resolution-2} leads to a graded free resolution for the base
field $\Bbbk$ regarded as a left $A$-module in this case.

For quadratic algebras of PBW type, we have the following result.

\begin{theorem}\label{KoszulResolution}
Let $A=\Bbbk\{V, I\}$ be a quadratic algebra of PBW type, and denote
by $A^!$ its dual quadratic algebra. Then:
\begin{enumerate}
\item The algebra $A$ is Koszul.
\item Let $d=\dim_{\Bbbk}V$, then
$\dim_{\Bbbk}A_i = \begin{pmatrix}d+i-1 \\ i
\end{pmatrix}$ and
$\dim_{\Bbbk} A^!_{-i} =
\begin{pmatrix}d \\ i
\end{pmatrix}$ for all $i\ge 0$
\item \label{KoszulResolution2}
The Koszul complexes \eqref{resolution} and \eqref{resolution-2} of
$A$ are graded free resolutions of length $\dim_\Bbbk V$ of the
base field $\Bbbk$.
\end{enumerate}
\end{theorem}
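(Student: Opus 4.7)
The plan is to derive part (2) first, then Koszulness (part (1)), with (3) following at once. Throughout I fix the PBW basis $\{v_1,\dots,v_d\}$ of $V$.

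For (2), the equality $\dim_\Bbbk A_i = \binom{d+i-1}{i}$ is immediate, since this counts the monomials $v_1^{a_1}\cdots v_d^{a_d}$ with $a_1+\cdots+a_d = i$. The PBW hypothesis also forces each product $v_j v_i$ with $j>i$ to coincide in $A$ with a $\Bbbk$-linear combination of ordered monomials $v_k v_l$, $k\le l$, which yields $\binom{d}{2}$ independent generators of $I$. Hence $\dim I=\binom{d}{2}$ and $\dim I^\bot = d^2-\binom{d}{2}=\binom{d+1}{2}$. To obtain $\dim A^!_{-i}=\binom{d}{i}$, I would show that $A^!$ itself admits a PBW basis, namely the strictly increasing monomials $v^*_{i_1}\cdots v^*_{i_k}$ with $i_1<\cdots<i_k$ in the dual basis $\{v_1^*,\dots,v_d^*\}$. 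This self-duality of the PBW property is a standard diamond-lemma (Bergman rewriting) argument: the relations of $A^!$, dual to the normalised relations of $A$, allow one to reduce any monomial in $A^!$ to a strictly increasing one, and the overlap ambiguities that must be resolved for confluence correspond bijectively to the ambiguities for $A$, which are already confluent by the PBW assumption. Counting strictly increasing $k$-tuples gives $\dim A^!_{-k}=\binom{d}{k}$.

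For (1), the strategy is Priddy's theorem for PBW algebras: show that the Koszul complex \eqref{resolution-2} augmented by $\epsilon:A\to \Bbbk$ is exact. Using the two PBW bases above, I would equip $A\otimes (A^!)^*$ with the filtration induced by the degree-lexicographic order on PBW monomials. The Koszul differential $\tilde e_A$ respects this filtration, and the associated graded complex is identified with the Koszul complex of the monomial (``leading-term'') algebra $\bar A=T(V)/\langle v_j v_i: j>i\rangle \cong \Bbbk[v_1]\otimes\cdots\otimes\Bbbk[v_d]$, whose quadratic dual is the exterior algebra $\Lambda(V^*)$. This is precisely the classical Koszul resolution of $\Bbbk$ over a polynomial ring, which is well known to be exact. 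A standard spectral-sequence argument lifts exactness from the associated graded complex back to the Koszul complex of $A$, showing that \eqref{resolution-2} is a graded free $A$-resolution of $\Bbbk$. From the resolution one reads off $\Ext^i_A(\Bbbk,\Bbbk)\cong A^!_{-i}$, and a direct check (using the form of the differentials $e_A$ and $\tilde e_A$) that the Yoneda product corresponds under this identification to multiplication in $A^!$ completes the proof that $A$ is Koszul.

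Part (3) is now immediate. The resolutions \eqref{resolution} and \eqref{resolution-2} are graded free by (1), their length is at most $d$ because $A^!_{-k}=0$ for $k>d$ by the count in (2), and the length is exactly $d$ since $A^!_{-d}$ is one-dimensional (generated by $v^*_1\cdots v^*_d$). The main obstacle is the exactness in (1): the delicate point is to verify that the PBW filtration is compatible with the Koszul differential in a way that makes the associated graded computation clean. Concretely this amounts to writing the relations of $A^!$ in ``leading monomial plus lower terms'' form with respect to the degree-lexicographic order on PBW monomials, and checking that the resulting filtration on the Koszul complex is bounded and exhaustive so that the spectral sequence converges.
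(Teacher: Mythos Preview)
Your proposal is correct and, for part (1), essentially unpacks the standard proof of Priddy's theorem, which the paper simply cites as a known fact. The one genuine methodological difference is in part (2): the paper proceeds in the order (1)$\Rightarrow$(2), first establishing Koszulness and then deducing $\dim_\Bbbk A^!_{-i}$ from the Hilbert-series identity $h_A(z)\,h_{A^!}(-z)=1$ valid for any Koszul algebra---since $h_A(z)=(1-z)^{-d}$ from the PBW basis, this forces $h_{A^!}(z)=(1+z)^d$ and hence $\dim_\Bbbk A^!_{-i}=\binom{d}{i}$. You instead compute $\dim_\Bbbk A^!_{-i}$ directly and independently of (1), by arguing that the PBW property is self-dual (the diamond-lemma confluence conditions for $A$ and $A^!$ are in bijection), so that $A^!$ has the strictly increasing monomials in the dual basis as a PBW basis. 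Your route is more constructive and self-contained, and has the minor advantage that the length-$d$ claim in (3) does not logically depend on (1); the paper's Hilbert-series argument is slicker once Koszulness is in hand and sidesteps any separate structural analysis of $A^!$.
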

\begin{proof}
The Koszul nature of quadratic algebras of PBW type is a well-known
fact, which was originally established in \cite[Theorem 5.3]{P}.

The dimension of $A_i$ can be easily computed from the PBW basis.
Let $h_A(z) = \sum_{i=0}^\infty z^i \dim_{\Bbbk} A_i$ and
$h_{A^!}(z)= \sum_{i=0}^\infty z^i \dim_{\Bbbk}A^!_{-i}$. Then
$h_A(z) =1/(1-z)^d$. It follows from some general facts on the
Hilbert series of $\Ext^\bullet(\Bbbk, \Bbbk)$ that $ h_A(z)
h_{A^!}(-z)=1$. By part (1), $A$ is Koszul, thus $A^!\cong
\Ext^\bullet(\Bbbk, \Bbbk)$.  Hence $h_{A^!}(z)=(1+z)^d$,
which implies the claimed dimension formula for $A^!_{-i}$.

The Koszul complexes are free resolutions since $A$ is Koszul by
part (1). As $A^!_{-i}=0$ for all $i>d$, the
length of the resolutions is $\dim_\Bbbk V$.
\end{proof}

\begin{remark}
Theorem \ref{KoszulResolution}.(\ref{KoszulResolution2}) will suffice
for the purpose of proving Theorem \ref{usual} and Theorem \ref{q-symmalg}
on $K$-groups.
The results below give a direct proof
that (left) Noetherian quantum symmetric algebras are regular.
\end{remark}

Let $M=\bigoplus_{i\in\Z}M_i$ be a graded module for a quadratic
algebra $A=\Bbbk\{V, I\}$. If $M$ is finitely generated, there
exists some integer $r$ such that $M_i=0$ for all $i<r$. That is, a
finitely generated graded module must be bounded below. Let
\[ \overline{M}:= \frac{A}{A_+}\otimes_A M \cong \Bbbk\otimes_A M. \]
The following result, is a special case of
\cite[Theorem 4.6]{Lam}.
\begin{theorem} Let $P$ be a finitely generated graded projective module over
a quadratic algebra $A=\Bbbk\{V, I\}$. Then $P$ is obtained from
$\overline{P}$ by extension of scalars. That is,
$P \cong A\otimes_{\Bbbk}\overline{P}$. Therefore, all finitely generated
graded projective modules over a quadratic algebra are free.
\end{theorem}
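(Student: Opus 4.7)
The plan is a standard graded Nakayama argument, using only the fact that $A$ is non-negatively graded with $A_0 = \Bbbk$. Since $P$ is finitely generated and graded, it is bounded below, and $\overline P = \Bbbk\otimes_A P = P/A_+P$ is a finite-dimensional graded $\Bbbk$-vector space (any finite set of $A$-generators of $P$ already spans $\overline P$). I would choose a homogeneous $\Bbbk$-basis $\{\bar p_\alpha\}$ of $\overline P$, lift each $\bar p_\alpha$ to a homogeneous element $p_\alpha\in P$ of the same degree, and assemble these lifts into a graded $A$-linear map
\[
\phi: F := A\otimes_\Bbbk \overline P \longrightarrow P,\qquad a\otimes \bar p_\alpha\longmapsto a\,p_\alpha.
\]

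The next step is to show $\phi$ is surjective by graded Nakayama. The cokernel $C := P/\phi(F)$ is a graded $A$-module which is bounded below and, by the choice of the $p_\alpha$, satisfies $A_+\,C = C$; comparing the lowest non-trivial homogeneous components forces $C=0$.

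Projectivity of $P$ now enters: the surjection $\phi$ admits a graded $A$-linear splitting, so $F\cong P\oplus K$ with $K := \ker\phi$. As a direct summand of the finitely generated $F$, the module $K$ is itself finitely generated and bounded below. Applying $\Bbbk\otimes_A -$ to the short exact sequence $0\to K\to F\to P\to 0$ and using that $\phi$ reduces modulo $A_+$ to the tautological isomorphism $\overline F \stackrel{\sim}{\to} \overline P$, one obtains $\overline K = 0$, i.e.\ $A_+K = K$. A second application of graded Nakayama yields $K=0$, whence $\phi$ is an isomorphism and $P\cong A\otimes_\Bbbk \overline P$.

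The only delicate point is justifying graded Nakayama, but in this setting it is essentially automatic: for any graded $A$-module $M$ which is bounded below and satisfies $A_+M = M$, the lowest non-zero homogeneous component $M_r$ would have to coincide with $(A_+M)_r = \sum_{i>0} A_i\cdot M_{r-i} = 0$, a contradiction. Hence no Noetherian hypothesis on $A$ is needed, and the argument is uniform across all quadratic algebras $A = \Bbbk\{V,I\}$.
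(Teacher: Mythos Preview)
Your argument is correct. The paper does not actually supply its own proof of this theorem; it merely records that the statement is a special case of \cite[Theorem 4.6]{Lam}. Your graded Nakayama argument is precisely the standard proof one finds in that reference, so there is nothing to compare.

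One small point worth making explicit: you assert that the surjection $\phi$ admits a \emph{graded} $A$-linear splitting, which is true but not immediate from the ungraded projectivity hypothesis. The quickest justification is to take any (ungraded) section $s:P\to F$ and replace it by its degree-zero component $s_0$, defined on homogeneous $p\in P_d$ by $s_0(p)=s(p)_d$; one checks directly that $s_0$ is $A$-linear and still splits $\phi$. Alternatively, you can bypass this entirely: an ungraded splitting already makes $0\to K\to F\to P\to 0$ split as $A$-modules, so applying $\Bbbk\otimes_A-$ still yields $\overline K=0$, and your graded Nakayama (which, as you correctly note, needs only that $K$ is bounded below, not that it is finitely generated) finishes the job.
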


Recall that a complex of graded left $A$-modules
\[
\cdots \longrightarrow C_n \stackrel{\phi_n}{\longrightarrow}
C_{n-1} \stackrel{\phi_{n-1}}{\longrightarrow} C_{n-2}
\longrightarrow \cdots
\]
is called minimal if $\phi_n(C_n)\subseteq A_+C_{n-1}$ for each $n$.
A minimal resolution is defined similarly.

\begin{theorem}\label{syzygy}
Every finitely generated graded left module over a quadratic algebra
$A=\Bbbk\{V, I\}$ of PBW type has a minimal free resolution of
length at most $ \dim V$.
\end{theorem}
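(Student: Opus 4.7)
The plan is to combine a graded Nakayama-style construction of a minimal free resolution with the length-$d$ Koszul resolution of Theorem \ref{KoszulResolution}, reducing the length bound to the vanishing of high $\Tor$ groups.

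First I would build a minimal graded free resolution $P_\bullet \to M$. Since $A_0 = \Bbbk$ and $A$ is non-negatively graded, the graded Nakayama lemma applies to bounded-below graded $A$-modules. Lifting a graded $\Bbbk$-basis of $\overline{M} := M/A_+M$ to homogeneous elements of $M$ produces a surjection $\phi_0 : P_0 := A \otimes_\Bbbk \overline{M} \twoheadrightarrow M$ whose kernel lies in $A_+ P_0$. Iterating the construction on $\Omega_1 := \ker \phi_0$, which remains graded and bounded below, yields a graded free resolution $P_\bullet \to M$ with the minimality property $\phi_i(P_i) \subseteq A_+ P_{i-1}$ for every $i \ge 1$.

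Next I would identify each free module $P_i$ with $A \otimes_\Bbbk \Tor^A_i(\Bbbk, M)$. Because every differential has image inside $A_+ P_{i-1}$, tensoring the resolution with $\Bbbk$ on the left kills all the differentials, giving
\[
\Tor^A_i(\Bbbk, M) \;\cong\; \Bbbk \otimes_A P_i
\]
as graded $\Bbbk$-vector spaces. In particular, the length of the minimal resolution equals $\sup\{i : \Tor^A_i(\Bbbk, M) \neq 0\}$, and $P_i = 0$ precisely when this $\Tor$ group vanishes.

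Finally I would apply the Koszul complex to bound these $\Tor$ groups. By Theorem \ref{KoszulResolution}(3) the complex \eqref{resolution} is a graded free resolution of $\Bbbk$ as a right $A$-module, and by the dimension formula in Theorem \ref{KoszulResolution}(2) one has $\dim_\Bbbk A^!_{-i} = \binom{d}{i} = 0$ for $i > d := \dim_\Bbbk V$. Hence this resolution has length at most $d$, and computing $\Tor^A_\bullet(\Bbbk, M)$ from it shows $\Tor^A_i(\Bbbk, M) = 0$ for all $i > d$. Combined with the identification above, $P_i = 0$ for $i > d$, so the minimal resolution terminates by homological degree $d$. The only technical subtlety is that the Nakayama construction must propagate, which requires each syzygy $\Omega_i$ to remain bounded below; this is automatic by induction, since the kernel of a graded map between bounded-below modules is bounded below, and $M$ and each $P_i$ are bounded below.
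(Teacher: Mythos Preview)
Your argument is correct and follows essentially the same route as the paper's proof: construct a minimal free resolution, use minimality to identify $\Bbbk\otimes_A P_i$ with $\Tor^A_i(\Bbbk,M)$, and then bound the length by computing these $\Tor$ groups via the finite Koszul resolution of $\Bbbk$ from Theorem~\ref{KoszulResolution}. The only cosmetic difference is that the paper cites \cite[Proposition \S1.4.2]{PP} for the existence of a minimal free resolution, whereas you spell out the graded Nakayama construction directly.
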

\begin{proof}
By \cite[Proposition \S1.4.2.]{PP}, every finitely generated graded
left $A$-module $M$ has a minimal free resolution
\begin{eqnarray}\label{F}
\cF: \cdots \longrightarrow F_2 \longrightarrow F_1 \longrightarrow
F_0 \longrightarrow M \longrightarrow 0.
\end{eqnarray}
Thus it suffices to show that this resolution has finite length.
Let us compute $\Tor^A_\bullet(\Bbbk, M)$ from the complex
$\Bbbk\otimes_A\cF$. By using the minimality of $\cF$, we obtain
\[
\Tor^A_i(\Bbbk, M) = \Bbbk\otimes_A F_i \quad \text{\ for all $i$}.
\]
In particular, $\Tor^A_i(\Bbbk, M) = 0$ if and only if $F_i=0$.

On the other hand, we may also compute $\Tor^A_\bullet(\Bbbk, M)$
from the complex obtained by tensoring (over $A$) the Koszul
resolution of the base field $\Bbbk$ (as a right $A$-module) with
$M$. By part (\ref{KoszulResolution2}) of Theorem
\ref{KoszulResolution}, $\Tor^A_i(\Bbbk, M)=0$ for all $i>\dim V$.
This leads to $F_i=0$ for all $i>\dim V$ in the minimal free
resolution for $M$.
\end{proof}

The following result is a consequence of Theorem \ref{syzygy}.

\begin{theorem}\label{finite-resolution}
Every finitely generated left module over a quadratic algebra
$A=\Bbbk\{V, I\}$ of PBW type has a free resolution of finite
length.
\end{theorem}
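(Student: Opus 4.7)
The plan is to reduce the ungraded statement to Theorem \ref{syzygy} via the standard associated-graded construction. Fix generators $m_1,\ldots,m_n$ of $M$, endow $A$ with its natural ascending filtration $F_pA:=\bigoplus_{j\le p}A_j$, and give $M$ the induced filtration $F_pM:=\sum_i(F_pA)\cdot m_i$. Both filtrations are exhaustive and bounded below ($F_{-1}M=0$). Since $A$ is already graded, $\mathrm{gr}(A)\cong A$ canonically, so $\mathrm{gr}(M)$ is a finitely generated graded left $A$-module, and Theorem \ref{syzygy} provides a minimal graded free resolution
\[
0\to G_d\to G_{d-1}\to\cdots\to G_0\to \mathrm{gr}(M)\to 0,
\]
with $d\le \dim_\Bbbk V$ and each $G_k$ a finitely generated graded free $A$-module.

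Next, lift this graded resolution step by step to a filtered free resolution of $M$. Choosing lifts in $M$ of a homogeneous basis of $G_0$ at the prescribed filtration levels yields a free filtered $A$-module $F_0$ (non-canonically isomorphic to $G_0$) together with a strict filtered surjection $\phi_0:F_0\twoheadrightarrow M$ whose associated graded map is exactly the given $G_0\to\mathrm{gr}(M)$. Equip $K_0:=\ker\phi_0$ with the subspace filtration; strictness of $\phi_0$ then yields $\mathrm{gr}(K_0)=\ker(G_0\to\mathrm{gr}(M))=\mathrm{im}(G_1\to G_0)$, and one iterates the construction with $(K_0,G_1)$ in place of $(M,G_0)$. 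After $d$ iterations, the final lift $\phi_d:F_d\to K_{d-1}$ has associated graded $G_d\to\mathrm{gr}(K_{d-1})$ an isomorphism (since the graded resolution is exact at $G_d$), so the standard bounded-below control lemma forces $\phi_d$ itself to be an isomorphism. Splicing the lifts then produces the desired finite free resolution
\[
0\to F_d\to F_{d-1}\to\cdots\to F_0\to M\to 0.
\]

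The main technical obstacle is to maintain strictness of each lifted surjection, so that passing to associated gradeds preserves surjectivity and correctly identifies kernels. This is a careful but routine application of the classical filtered-module lemmas valid in the bounded-below setting: $\mathrm{gr}(\phi)$ being surjective, injective, or zero implies $\phi$ has the corresponding property. Once strictness is established at every stage, the rest of the construction is formal and the length bound $d\le\dim_\Bbbk V$ is inherited directly from Theorem \ref{syzygy}.
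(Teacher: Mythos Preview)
Your argument is correct, but it proceeds by a different mechanism from the paper's. The paper homogenises a presentation of $M$: writing $A^n\xrightarrow{\phi}A^m\to M\to 0$, it passes to the polynomial extension $T=A[x]$ (itself a PBW-type quadratic algebra), replaces each entry $\phi_{ij}=\sum_k\phi_{ij}[k]$ by the homogeneous element $\tilde\phi_{ij}=\sum_k x^{r-k}\phi_{ij}[k]\in T_r$, and sets $\tilde M=\mathrm{coker}\,\tilde\phi$. Theorem~\ref{syzygy} (applied over $T$) yields a finite free $T$-resolution $\cF$ of $\tilde M$; then the short resolution $0\to T\xrightarrow{1-x}T\to A\to 0$ shows $\Tor^T_i(A,\tilde M)=0$ for $i\ge 1$, so $A\otimes_T\cF$ is a finite free $A$-resolution of $M=A\otimes_T\tilde M$. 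Your route instead stays over $A$ throughout, filtering $M$ directly and lifting a graded resolution of $\mathrm{gr}(M)$ term by term. This is more hands-on but also more economical: you avoid the auxiliary ring $T$ entirely, and you obtain the sharp length bound $d\le\dim_\Bbbk V$ immediately, whereas the paper's construction a priori produces a resolution of length $\le\dim_\Bbbk V+1$. The paper's method, on the other hand, sidesteps all the strictness bookkeeping by a single $\Tor$ computation. One small correction to your write-up: the implication ``$\mathrm{gr}(\phi)=0\Rightarrow\phi=0$'' is false in general (it only forces $\phi$ to lower filtration), but you never actually invoke it---only the surjectivity and bijectivity statements are used, and those are valid in the bounded-below setting.
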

\begin{proof}
Let $A^n\stackrel{\phi}{\longrightarrow} A^m\longrightarrow M
\longrightarrow 0$ be an exact sequence of left $A$-modules. We
think of $A^m$ and $A^n$ as consisting of rows with entries from
$A$. Then $\phi$ can be represented as an $n\times m$-matrix
$(\phi_{i j})$ with entries $\phi_{i j}\in A$, and acts on $A^n$ by
matrix multiplication from the right.

Now consider the algebra $T=A[x]$, which consists of polynomials
in $x$ with coefficients in $A$. We stipulate that $x$ commutes with
all elements of $A$. Then $A=T/(1-x)T$. It is easy to see that $T$
is a quadratic algebra which is also Koszul.

Let $r$ be the smallest integer such that every entry $\phi_{i j}$
of the matrix of $\phi$ is contained in $A_0\oplus
A_1\oplus\cdots\oplus A_r$. Then we can write $\phi_{i j} = \phi_{i
j}[0] + \phi_{i j}[1]+\cdots+\phi_{i j}[r]$ with $\phi_{i j}[k]\in
A_k$. Upon replacing the entries $\phi_{i j}$ of the matrix by
$\phi_{i j}(x)=x^r\phi_{i j}[0] + x^{r-1}\phi_{i
j}[1]+\cdots+\phi_{i j}[r]$, we obtain a rectangular matrix
$\tilde\phi=(\tilde\phi_{i j})$ with entries which are homogeneous
elements of $T$ of degree $r$.

Regard $\tilde\phi$ as a left $T$-module homomorphism
$T^n\longrightarrow T^m$, and let $\tilde M=coker\tilde\phi$. Then
$\tilde M$ is a graded $T$-module, and $A\otimes_T \tilde M=M$. By
Theorem \ref{finite-resolution}, we have a free resolution $\cF$ of
$\tilde M$ with finite length. The homology of the complex
$A\otimes_T\cF$ is $\Tor^T_\bullet(A, \tilde M)$.

To compute $\Tor^T_\bullet(A, \tilde M)$, we tensor  with $\tilde M$
the free resolution
\[
0\longrightarrow T \stackrel{1-x}{\longrightarrow} T \longrightarrow
A\longrightarrow 0
\]
of the right $T$-module $A$, obtaining
\[
0\longrightarrow \tilde M\stackrel{1-x}{\longrightarrow} \tilde M.
\]
Since the action of $1-x$ on any graded $T$-module is injective,
the above complex is exact. This shows that
\begin{eqnarray}\label{homology}
\Tor^T_i(A, \tilde M)=0, \quad \text{for all $i\ge 1$}.
\end{eqnarray}

Note that $A\otimes_T\cF$ is also a complex of left $A$-modules,
which are all free except for $A\otimes_T \tilde M=M$. From equation
\eqref{homology} we see that this complex is exact, thus is a free
resolution of finite length for the $A$-module $M$.
\end{proof}

Recall that a left module $E$ over a ring $R$ is called {\it stably free}
if there exists a free left $A$-module $F$ of finite rank  such that
$E\oplus F \cong R^n$ for some finite $n$. Clearly a stably free
module is finitely generated and projective.

It is easy to see that if a projective module admits a free
resolution of finite length, it is stably free. Conversely, a
projective $R$-module $E$ with a free resolution
\[
0\longrightarrow F_n \longrightarrow \cdots \longrightarrow
F_1\longrightarrow F_0\longrightarrow E\longrightarrow 0
\]
of finite length $n$ can be shown to be stably free by a simple
induction on $n$. Indeed, if $n=0$, the claim is obviously true. Let
$E'=Ker(F_0\longrightarrow E)$, then we have the following free
resolution for $E'$:
\[
0\longrightarrow F_n \longrightarrow \cdots \longrightarrow
F_1\longrightarrow E'\longrightarrow 0.
\]
Since the length of the resolution is $n-1$, by the induction
hypothesis, $E'$ is stably free. Hence $E$ is stably free since
$F_0\cong E\oplus E'$.

The next statement is an immediate consequence of Theorem \ref{finite-resolution}.

\begin{corollary}\label{stable-free}
Every finitely generated projective module over a quadratic algebra
$A=\Bbbk\{V, I\}$ of PBW type is stably free.
\end{corollary}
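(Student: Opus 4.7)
The corollary is essentially immediate from Theorem \ref{finite-resolution} together with the standard homological fact (already sketched in the paragraph preceding the corollary) that a projective module with a finite length free resolution is stably free. So my plan is simply to spell out this two-step reduction.

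First, let $E$ be a finitely generated projective left $A$-module. By Theorem \ref{finite-resolution}, $E$ admits a free resolution
\[
0 \longrightarrow F_n \longrightarrow \cdots \longrightarrow F_1 \longrightarrow F_0 \longrightarrow E \longrightarrow 0
\]
of finite length $n$, with each $F_i$ a finitely generated free $A$-module (finite generation of the $F_i$ is ensured by tracking generators through the construction in the proof of Theorem \ref{finite-resolution}).

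Next, I would run induction on $n$. For $n=0$ we have $E \cong F_0$ free, so certainly stably free. For $n \ge 1$, set $E' := \ker(F_0 \to E)$. Because $E$ is projective, the surjection $F_0 \to E$ splits, yielding $F_0 \cong E \oplus E'$; in particular $E'$ is projective, and it inherits the resolution
\[
0 \longrightarrow F_n \longrightarrow \cdots \longrightarrow F_1 \longrightarrow E' \longrightarrow 0
\]
of length $n-1$. By the induction hypothesis, $E'$ is stably free, i.e.\ there exists a finitely generated free $F$ with $E' \oplus F \cong A^m$ for some $m$. Then
\[
E \oplus (F_0 \oplus F) \cong (E \oplus E') \oplus (F_0 \oplus F) \cong F_0 \oplus A^m \oplus F_0,
\]
a finitely generated free module, so $E$ is stably free as required.

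There is no real obstacle here: the combinatorial work is entirely absorbed in Theorem \ref{finite-resolution} (which in turn rested on the Koszul resolution of the ground field, Theorem \ref{KoszulResolution}), and the splitting argument is a textbook induction on the length of a projective resolution. The only thing worth being careful about is that all modules in sight remain finitely generated, which follows because each $F_i$ in the constructed resolution has finite rank and the kernel at each step inherits finite generation from Noetherian-style bookkeeping in the proof of Theorem \ref{finite-resolution}.
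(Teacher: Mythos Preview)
Your approach is exactly the paper's: invoke Theorem \ref{finite-resolution} to obtain a finite free resolution, then run the induction on resolution length already sketched in the paragraph immediately preceding the corollary. One small slip in your final display: $E \oplus (F_0 \oplus F)$ is not isomorphic to $(E \oplus E') \oplus (F_0 \oplus F)$; the clean line is
\[
E \oplus A^m \;\cong\; E \oplus (E' \oplus F) \;\cong\; (E \oplus E') \oplus F \;\cong\; F_0 \oplus F,
\]
which is free, so $E$ is stably free.
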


Every quadratic algebra $A=\Bbbk\{V, I\}$ of PBW type is a
$\Z_+$-graded algebra with degree $0$ subalgebra $\Bbbk$.
If the algebra is assumed to be left Noetherian, then Theorem
\ref{finite-resolution} implies that it is regular.

\begin{lemma}\label{regular-qsymm}
A quadratic algebra of PBW type is left regular if it is left
Noetherian. In particular, every left Noetherian quantum symmetric
algebra is left regular.
\end{lemma}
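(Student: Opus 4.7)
The plan is to derive this lemma as an essentially immediate corollary of Theorem \ref{finite-resolution}. Recall that by definition left regularity requires both left Noetherianity (which is assumed) and the existence, for every finitely generated left $A$-module, of a finite resolution by finitely generated projective left $A$-modules. The first condition holds by hypothesis, so only the second needs attention.

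Let $M$ be a finitely generated left $A$-module. The key observation is that under the left Noetherian hypothesis one may construct a resolution of $M$ by \emph{finitely generated} free $A$-modules: choosing a finite set of generators of $M$ gives a surjection $A^{m_0} \twoheadrightarrow M$, whose kernel is again finitely generated by left Noetherianity, and iterating produces a (possibly infinite) resolution $\cdots \to A^{m_1} \to A^{m_0} \to M \to 0$ by finitely generated free modules. By Theorem \ref{finite-resolution}, $M$ also admits \emph{some} free resolution of finite length $n$, so by a standard Schanuel-type argument the $n$th syzygy in any projective resolution is itself projective. Truncating the finitely generated free resolution at that syzygy therefore yields a finite resolution of $M$ by finitely generated projective $A$-modules. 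This establishes left regularity.

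The `in particular' clause is then immediate from the definitions: every quantum symmetric algebra $A = \Bbbk\{V, I\}$ is by definition a quadratic algebra that admits a PBW basis, hence is of PBW type, so if it is also left Noetherian the first assertion applies directly.

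I do not anticipate a significant obstacle. The substantive technical work has already been carried out in Theorems \ref{KoszulResolution}, \ref{syzygy}, and \ref{finite-resolution}; the lemma packages the resulting information into the standard definition of regularity. The only minor point requiring care is bridging the gap between the existence of \emph{some} finite-length free resolution (which is what Theorem \ref{finite-resolution} provides) and the existence of a finite-length resolution by \emph{finitely generated} projectives (which is what regularity demands), and this is handled uniformly by combining the left Noetherian assumption with the Schanuel-type truncation argument sketched above.
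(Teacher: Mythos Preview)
Your proposal is correct and follows essentially the same route as the paper: the lemma is presented there as an immediate consequence of Theorem~\ref{finite-resolution}, with the left Noetherian hypothesis supplying the remaining half of the definition of regularity. The only difference is that you take extra care to ensure the projectives in the resolution are \emph{finitely generated}, handling this via a Schanuel truncation; the paper simply asserts the implication without addressing that point (presumably because the construction in the proof of Theorem~\ref{finite-resolution} already yields finitely generated free modules, coming as it does from a minimal graded resolution over $T=A[x]$ tensored down to $A$).
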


We may now use \cite[Theorem 7]{Q} to compute the usual algebraic
K-groups $K_i(A)$ of $A$. The result is as follows.

\begin{theorem}\label{usual}
Let $A$ be a quadratic algebra of PBW type, and assume that $A$ is
left Noetherian. Then
\[K_i(A)=K_i(\Bbbk), \quad i=0, 1, \dots.\]
\end{theorem}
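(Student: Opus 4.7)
The plan is to deduce this from \cite[Theorem 7]{Q}, which computes the algebraic K-theory of a left-regular filtered ring in terms of its degree-zero subalgebra. I would view the $\Z_+$-grading $A=\bigoplus_{i\ge 0} A_i$ as inducing the canonical filtration $F_p A = \bigoplus_{i\le p} A_i$, whose associated graded ring is $A$ itself.

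The hypotheses of Quillen's theorem are then straightforward to verify. The associated graded $A$ is left Noetherian by the standing assumption, and it is left regular by Lemma \ref{regular-qsymm}; moreover $A$ is flat over $A_0 = \Bbbk$ since $\Bbbk$ is a field, and $A_0 = \Bbbk$ is trivially regular. The only hypothesis requiring argument is that $\Bbbk = A/A_+$ has finite projective dimension as a left $A$-module. This is exactly what Theorem \ref{KoszulResolution}.(\ref{KoszulResolution2}) supplies: the Koszul complex \eqref{resolution-2} is a finite graded free resolution of $\Bbbk$ as a left $A$-module, of length $\dim_\Bbbk V$.

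With these hypotheses in place, Quillen's theorem immediately yields an isomorphism $K_i(A) \longrightarrow K_i(A_0) = K_i(\Bbbk)$ for every $i\ge 0$, which is the claim. I do not anticipate any serious obstacle; the work has already been done in establishing the finite Koszul resolution and the left-regularity of $A$, and what remains is essentially a bookkeeping invocation of the filtered-ring theorem. As an alternative route, one could specialise Theorem \ref{filtered} to the trivial case $\fg = 0$, $\U = \Bbbk$, which is the equivariant enhancement of Quillen's result and automatically bundles together all the pieces needed here.
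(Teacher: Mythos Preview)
Your proposal is correct and follows essentially the same approach as the paper: both invoke \cite[Theorem 7]{Q} (the filtered-ring theorem) directly, using Lemma \ref{regular-qsymm} for left regularity and Theorem \ref{KoszulResolution}.(\ref{KoszulResolution2}) for the finite projective dimension of $\Bbbk$ over $A$. The paper's own argument is in fact just the one-line remark preceding the statement, and your write-up simply makes the verification of the hypotheses explicit; the only cosmetic difference is the direction in which you state the isomorphism (the natural map goes $K_i(\Bbbk)\to K_i(A)$ via $A\otimes_\Bbbk -$).
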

In particular, $K_0(\Bbbk)=\Z$. This is consistent with Corollary
\ref{stable-free}.
%
%
\subsection{Proof of Theorem \ref{q-symmalg}}\label{q-symmalg-proof}
By Lemma \ref{regular-qsymm} and Proposition \ref{KequalK},
$K^\U_i(A)=K_i(\cM(A, \U))$. Now $A=A_0 + A_1 + \dots$ is
$\Z_+$-graded with $A_0=\Bbbk$. Thus we may apply Theorem
\ref{graded} to compute its $\U$-equivariant K-groups. We have
$K^\U_i(A)=K^\U_i(\Bbbk)=K_i(\cP(\Bbbk, \U))$ for all $i=0, 1,
\dots$.

Note that $\cM(\Bbbk, \U)$ is the category of finite dimensional
left $\U$-modules of type-$(1, 1, \dots)$. As is well-known,
$\cM(\Bbbk, \U)$ is semi-simple, thus $\cP(\Bbbk, \U)=\cM(\Bbbk,
\U)=\Umod$.
\hfill$\square$

In particular $K^\U_0(A)$ is the Grothendieck group of $\Umod$.

\subsection{Examples} 
In this section, we consider examples of quantum symmetric algebras
arising from natural modules for the quantum groups associated with
the classical series of Lie algebras. These quantum symmetric algebras
also feature prominently in the study of the invariant theory of 
quantum groups \cite{LZZ}.

\begin{example}{\em Coordinate algebra of a quantum matrix.}
A familiar example of quantum symmetric algebras is $\cO(M(m, n))$,
the coordinate algebra of a quantum $m\times n$ matrix. It is
generated by $x_{i j}$ ($1\le i\le m$, $1\le j\le n$) subject to the
following relations
\begin{eqnarray}\label{qmatrix}
\begin{aligned}
x_{ij}x_{ik}&=
           q^{-1}x_{ik}x_{ij}, && j<k,\\
x_{ij}x_{kj}&=
            q^{-1}x_{kj}x_{ij}, && i<k,\\
x_{ij}x_{kl}&=x_{kl}x_{ij}, && i<k,
j>l,\\
x_{ij}x_{kl}&=x_{kl}x_{ij} - (q-q^{-1})x_{il}x_{kj},  && i<k, j<l.
\end{aligned}
\end{eqnarray}

It is well known that this is a module algebra over $\U_q(sl_n)$
with a PBW basis consisting of ordered monomials of the elements
$x_{i j}$. The $\U_q(sl_n)$-action on $\cO(M(m, n))$ can be
described as follows. For each $i$, the subspace $\oplus_{j=1}^n
\Bbbk x_{i j}$ is isomorphic to the natural module for $\U_q(sl_n)$.
Thus $\cO(M(m, n))$ is a quadratic algebra of the
$\U_q(sl_n)$-module $V$ which is the direct sum of $m$ copies of the
natural module.

In particular, when $m=1$, all relations but the first of
\eqref{qmatrix} are vacuous, and we obtain the quantised coordinate
algebra of affine $n$-space.

By \cite[\S I]{BK}, $\cO(M(m, n))$ is left Noetherian for all $m$
and $n$. By Theorem \ref{usual}, the ordinary algebraic K-groups of
$\cO(M(m, n))$ are given by $K_i(\cO(M(m, n)))=K_i(\Bbbk)$ for all
$i$. Theorem \ref{q-symmalg} also applies, and we have
\[
K^{\U_q(sl_n)}_i(\cO(M(m, n))) \cong K_i(\text{$\U_q(sl_n)$-{\bf mod}}),
\quad \text{for all $i$}.
\]
\end{example}

\begin{example}{\em Quantum symmetric algebras associated with the natural modules for
$\U_q(so_m)$ and $\U_q(sp_{2n})$.} We first briefly recall the
construction given in \cite{LZZ}.

An important structural property of the quantum group $\U=\U_q(\fg)$
associated with a simple Lie algebra $\fg$ is the braiding 
of its module category provided by a
universal $R$-matrix \cite{D}. We may think of this as an
invertible element in some completion of $\U\otimes_\Bbbk\U$, which
satisfies the following relations
\begin{eqnarray}
\begin{aligned}
&R \Delta(x) = \Delta'(x) R, \quad \forall x\in \U, \label{R1}&\\
&(\Delta\otimes\id)R = R_{1 3} R_{2 3}, \quad (\id\otimes\Delta)R=
R_{1 3} R_{1 2}, & \label{R2} \\
&(\epsilon\otimes \id)R = (\id\otimes\epsilon)R=1\otimes 1,&
\end{aligned}
\end{eqnarray}
where $\epsilon$ is the co-unit and $\Delta'$ is the opposite
co-multiplication. Here the subscripts of $R_{1 3}$ etc. have the
usual meaning as in \cite{D}. It follows from the second line of
\eqref{R2} that $R$ satisfies the celebrated Yang-Baxter equation
$
R_{1 2} R_{1 3} R_{2 3} = R_{2 3} R_{1 3} R_{1 2}.
$

Given a finite dimensional $\U$-module $V$, let $R_{V, V}$ denote
the automorphism of $V\otimes V$ defined by the universal
$R$-matrix of $\U$. Let $P: V\otimes V\longrightarrow V\otimes V$,
$v\otimes w \mapsto w\otimes v$, be the permutation, and define
$\check{R} =P R_{V, V}$. Then $\check{R}\in \End_{\U}(V\otimes V)$
by \eqref{R2}, and $\check R$
has characteristic polynomial of the form
\[ \prod_{i=1}^{k_+} \left(x - q^{\chi_i^{(+)}}\right)
\prod_{j=1}^{k_-} \left(x + q^{\chi_i^{(-)}}\right),
\]
where $\chi^{(+)}_i$ and $\chi^{(-)}_i$ are integers, and $k_{\pm}$
and positive integers. Consider the $\U$-submodule $I_-$ of $V\otimes V$
defined by
\begin{eqnarray}\label{relations}
I_-= \prod_{i=1}^{k_+} \left(\check{R} -
q^{\chi_i^{(+)}}\right)(V\otimes V).
\end{eqnarray}

We may then form the $\U$-module algebra $\Bbbk\{V, I_-\}$
associated to $V$ and $I_-$.
There is a classification in \cite{BZ, Zw} of those irreducible
$\U$-modules $V$ satisfying the condition
that the corresponding quadratic algebras
$\Bbbk\{V, I_-\}$ admit PBW bases. In particular, the natural
modules of the quantum groups associated with the classical Lie
algebras all have this property \cite{Zw}.

Recall from \cite{LZZ} that if  $A$ and $B$ are locally finite $\U$-module
algebras, $A\otimes_{\Bbbk}B$ becomes a $\U$-module algebra
if its multiplication is
twisted by the universal $R$-matrix. Explicitly, if we write
$R=\sum_t \alpha_t\otimes \beta_t$, then for all $a, a'\in A$ and
$b, b'\in B$,
\[
(a\otimes b)(a'\otimes b') = \sum_{t} a (\beta_t\cdot a')\otimes
(\alpha_t\cdot b) b'.
\]

If $C$ is a third locally finite $\U$-module algebra, the
$\U$-module algebras $(A\otimes B)\otimes C$ and $A\otimes (B\otimes
C)$ are canonically isomorphic \cite{LZZ}.
Therefore, given $\Bbbk\{V, I_-\}$ associated with an irreducible
finite dimensional $\U$-module, we have locally finite
$\U$-module algebras $\Bbbk\{V, I_-\}^{\otimes m}$ for each positive
integer $m$.

For any vector space $W$ we use the notation $W^n=\oplus^n W$.

\begin{theorem}\label{mSq}
Let $V$ be the natural module of $\U_q(so_m)$ or $\U_q(sp_{2n})$,
and let $I_-$ be the $\U$-submodule of $V\otimes V$ defined by
\eqref{relations}. Then $S_q(V^m): =\Bbbk\{V, I_-\}^{\otimes m}$ is
a Noetherian quantum symmetric algebra for every $m$.
\end{theorem}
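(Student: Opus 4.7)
The plan is to establish both the PBW property and the left Noetherian property of $S_q(V^m)$ by induction on the number of tensor factors $m$. For the base case $m=1$, the results of \cite{BZ, Zw} show that for the natural $\U$-module $V$ of classical type, $\Bbbk\{V, I_-\}$ admits a PBW basis relative to a suitable ordered basis $v_1, \ldots, v_d$ of $V$, and is known to be left Noetherian in these cases (via an iterated Ore-extension description of the explicit relations \eqref{relations}). For the inductive step, I would view $S_q(V^m)$ as obtained from $S_q(V^{m-1})$ by adjoining a new tensor factor $\Bbbk\{V, I_-\}$ whose multiplication with the previous factors is twisted by the universal $R$-matrix via the formula $(a \otimes b)(a' \otimes b') = \sum_t a(\beta_t \cdot a') \otimes (\alpha_t \cdot b) b'$ recalled above, and whose associativity across several factors uses the canonical isomorphism $(A \otimes B) \otimes C \cong A \otimes (B \otimes C)$ of twisted tensor products from \cite{LZZ}.

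Writing $v_i^{(a)}$ for the copy of $v_i$ in the $a$-th tensor factor, Jimbo's explicit formula for the action of the $R$-matrix on $V \otimes V$ produces, for each $a < b$, quadratic cross-relations expressing every product $v_j^{(b)} v_i^{(a)}$ as a $\Bbbk$-linear combination of ordered products $v_k^{(a)} v_\ell^{(b)}$, in which the coefficient of $v_i^{(a)} v_j^{(b)}$ is a nonzero scalar. Combined with the intra-copy relations \eqref{relations}, we obtain a complete set of quadratic rewriting rules for the generators $\{v_i^{(a)}\}$ ordered lexicographically by $(a, i)$. The PBW property of $S_q(V^m)$ would then follow from Bergman's Diamond Lemma applied to this combined set of rules. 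The ambiguity overlaps to be resolved are of two types: the intra-copy cubic overlaps, which are handled already by the base case $m=1$, and the genuinely three-copy overlaps $v_k^{(c)} v_j^{(b)} v_i^{(a)}$ with $a < b < c$, whose resolvability is precisely the content of the Yang-Baxter equation $R_{12} R_{13} R_{23} = R_{23} R_{13} R_{12}$ restricted to $V^{\otimes 3}$. Verifying this confluence condition—tracking which scalar coefficients appear at each of the intermediate reorderings—is the main technical obstacle of the proof.

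Once the PBW basis is in hand, the Noetherian property follows by exhibiting $S_q(V^m)$ as an iterated Ore-type (skew polynomial) extension of $\Bbbk$: we adjoin the generators in the lex order $v_1^{(1)}, \ldots, v_d^{(1)}, v_1^{(2)}, \ldots, v_d^{(m)}$, and at each stage the rewriting rules express the new generator past the previously adjoined ones with leading coefficient a nonzero scalar (up to lower-order corrections coming from the $R$-matrix and from \eqref{relations}). Since such extensions of a Noetherian algebra remain left Noetherian, induction delivers the statement. Alternatively, one may equip $S_q(V^m)$ with the ascending filtration induced by the PBW degree and check that the associated graded ring is a $q$-commutative polynomial algebra in $md$ variables, to which the standard Noetherian-lifting argument of Lemma \ref{S-primed} immediately applies.
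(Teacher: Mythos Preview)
Your PBW argument via the Diamond Lemma and the Yang--Baxter equation is sound in outline and is essentially what underlies the reference \cite{LZZ}, which the paper simply cites for this part. The gap is in your Noetherian argument.

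Neither of your two proposed routes works as stated. The iterated Ore extension claim fails already for $m=1$: in the $\U_q(so_{2n})$ case, the intra-copy relation
\[
v_{2n-s}\, v_{s+1} \;=\; q^{2}\, v_{s+1}\, v_{2n-s} \;-\; q\, v_{s}\, v_{2n+1-s} \;+\; q\, v_{2n+1-s}\, v_{s}
\]
has correction terms involving $v_{2n+1-s}$, whose index exceeds that of $v_{2n-s}$; hence at the step where you would adjoin $v_{2n-s}$, the required skew-derivation does not land in the subalgebra already constructed. Your alternative claim, that the associated graded for the ``PBW degree'' filtration is a $q$-commutative polynomial ring, is also false: $S_q(V^m)$ is already $\Z_+$-graded by total degree, so the associated graded for that filtration is the algebra itself, and the correction terms displayed above are of the \emph{same} total degree as the leading term, so they do not disappear.

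What the paper does instead is invoke a sharper criterion of Brown--Goodearl (Lemma~\ref{BK-lemma}): if generators $u_1,\dots,u_N$ satisfy
\[
u_j u_i \;=\; q_{ij}\, u_i u_j \;+\; \sum_{s=1}^{i-1}\sum_{t=1}^{N}\bigl(\alpha^{st}_{ij}\, u_s u_t + \beta^{st}_{ij}\, u_t u_s\bigr)
\qquad (i<j),
\]
then the algebra is Noetherian. The point is that only \emph{one} factor in each correction monomial is required to have index strictly below $i$; the other factor $u_t$ may be arbitrary, in particular of index larger than $j$. One then checks, for a specific ordering of the generators $X_{ia}$, that both the intra-copy relations and the $R$-matrix cross-relations fit this form (the cross-relations because the $\alpha'_t$ lower weights while the $\beta'_t$ raise them). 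One intra-copy relation is redundant and must be discarded to match the hypothesis of the lemma; this is harmless because any quotient of a Noetherian ring is Noetherian. This weakening of the Ore condition is precisely the refinement your sketch lacks.
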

Here by Noetherian we mean that the algebra is both left
and right Noetherian.

To prove the theorem, we require some preliminaries.
The following result from \cite{BK} will be of crucial importance.
\begin{lemma}\cite[Proposition I.8.17]{BK}\label{BK-lemma}
Let $A$ be an associative algebra over $\Bbbk$. Let $u_1, u_2,
\dots, u_N$ be a finite sequence of elements which generate $A$.
Assume that there exist scalars $q_{i j}\in\Bbbk^\times$, $\alpha_{i
j}^{s t}, \beta_{i j}^{s t}\in \Bbbk$ such that for all $i<j$,
\begin{eqnarray}\label{Noether-relation}
u_j u_i = q_{i j} u_i u_j
+ \sum_{s=1}^{i-1}\sum_{t=1}^N \left(\alpha_{i j}^{s t} u_s u_t +
\beta_{i j}^{s t} u_t u_s\right),
\end{eqnarray}
then $A$ is Noetherian.
\end{lemma}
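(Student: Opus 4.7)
The plan is to establish Noetherianity of $A$ by equipping it with an exhaustive ascending algebra filtration whose associated graded ring is a quotient of a quantum affine space (hence Noetherian), and then invoking the standard principle that a filtered algebra with Noetherian associated graded is itself Noetherian.

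The crux is the choice of filtration. I would assign to each generator $u_i$ the positive integer weight $\omega_i := 2^N - 2^{N-i}$, so that $\omega_1 < \omega_2 < \cdots < \omega_N$. The key numerical inequality, verified by the identity $\omega_i + \omega_{i+1} - \omega_{i-1} - \omega_N = 2^{N-i-1} + 1 > 0$, then gives
\[
\omega_s + \omega_t \;\le\; \omega_{i-1} + \omega_N \;<\; \omega_i + \omega_{i+1} \;\le\; \omega_i + \omega_j
\]
for every $1 \le s < i < j \le N$ and every $t \in \{1, \dots, N\}$. Thus in the defining relation \eqref{Noether-relation}, every correction monomial $u_s u_t$ or $u_t u_s$ has total weight strictly smaller than the leading term $u_j u_i$ of weight $\omega_i + \omega_j$.

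I would define $F_p A$ to be the image in $A$ of the span, taken in the free algebra on $u_1, \dots, u_N$, of all monomials $u_{i_1} \cdots u_{i_k}$ with $\sum_\ell \omega_{i_\ell} \le p$. This is an exhaustive, ascending algebra filtration with $F_0 A \supseteq \Bbbk$. Passing to the associated graded $\operatorname{gr}(A) = \bigoplus_p F_p A / F_{p-1} A$, the weight estimate above forces the correction terms to vanish modulo lower filtration, so the induced relations reduce to
\[
u_j u_i = q_{ij}\, u_i u_j, \qquad i < j.
\]
Hence $\operatorname{gr}(A)$ is a quotient of the quantum affine space $\Bbbk_{\mathbf q}[x_1, \dots, x_N]$ defined by these skew-commutation relations.

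The quantum affine space is a classical iterated Ore extension $\Bbbk[x_1][x_2; \sigma_2]\cdots[x_N; \sigma_N]$, where $\sigma_j$ is the automorphism scaling $x_i$ ($i < j$) by $q_{ij}$, and is therefore Noetherian. As a quotient of a Noetherian ring, $\operatorname{gr}(A)$ is Noetherian, and the standard lifting theorem (cf.\ \cite[Lemma 3(i), p.~119]{Q}, as used in the proof of Lemma~\ref{S-primed}) then yields Noetherianity of $A$. The main technical obstacle is the explicit construction of the weight function: since the index $t$ in \eqref{Noether-relation} ranges unboundedly over $\{1, \dots, N\}$, no single-component weight with, e.g., gaps growing in $i$ will suffice; the choice $\omega_i = 2^N - 2^{N-i}$, with gaps halving as $i$ grows, is what allows the sum $\omega_i + \omega_{i+1}$ to dominate $\omega_{i-1} + \omega_N$ for all admissible $i$.
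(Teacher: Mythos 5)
This lemma is not proved in the paper at all --- it is quoted from Brown--Goodearl \cite[Proposition I.8.17]{BK} --- so the only comparison available is with the proof in that source, and your argument essentially reproduces it: filter so that the correction terms drop in degree, identify the associated graded ring as a quotient of quantum affine space, and transfer Noetherianity back through the filtration. Your execution is correct. The only genuinely delicate point is the existence of the weight function, and your choice $\omega_i = 2^N - 2^{N-i}$ works: the gaps $\omega_i - \omega_{i-1} = 2^{N-i}$ decrease geometrically, so each gap strictly exceeds the sum of all gaps above it, which is precisely what the worst case $s=i-1$, $t=N$, $j=i+1$ of \eqref{Noether-relation} demands (equivalently, $\omega_N - \omega_{i+1} < \omega_i - \omega_{i-1}$), and your identity $\omega_i + \omega_{i+1} - \omega_{i-1} - \omega_N = 2^{N-i-1}+1 > 0$ checks out. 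The remaining steps are standard and correctly assembled: since $F_pA/F_{p-1}A$ is spanned by classes of monomials of weight exactly $p$, the ring $\mathrm{gr}(A)$ is generated by the symbols $\bar u_i$ and inherits the relations $\bar u_j \bar u_i = q_{ij}\bar u_i \bar u_j$, hence is a quotient of multiparameter quantum affine space, which is Noetherian as an iterated Ore extension; and the filtered--graded transfer you invoke is legitimate for your exhaustive, nonnegative filtration --- indeed it is the same citation of Quillen that this paper uses in the proof of Lemma \ref{S-primed}. Your closing remark is also apt: because $t$ ranges over all of $\{1,\dots,N\}$, weights with increasing gaps (such as $\omega_i = 2^i$) fail, and the halving-gap choice is the natural remedy.
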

Observe in particular that the algebra $\tilde A$ presented in terms of
the generators $u_1, \dots, u_N$ and the relations
\eqref{Noether-relation} is Noetherian. Thus any algebra defined
by the same generators subject to \eqref{Noether-relation} and extra relations
is a quotient of $\tilde A$, and hence is also Noetherian.

Let $V$ be the natural $\U_q(so_{2n})$-module, and 
let $\{v_a\mid a=1,\dots,2n\}$ be a basis of weight vectors of $V$,
with weights decreasing as $a$ increases.
Order this basis in the natural way: $v_1, v_2, \dots, v_{2n}$.
Then $\Bbbk\{V, I_-\}$ is generated by $v_a$ ($1\le a\le 2n$)
with the following relations
\begin{eqnarray}\label{vv-relations}
\begin{aligned}
& v_b v_a = qv_a  v_b , \quad a<b\le 2n, \ a+b\ne 2n+1,  \\
& v_{n+1} v_n = v_n v_{n+1},\\
&v_{2n-i} v_{i+1}= q^2 v_{i+1} v_{2n-i} -q v_i  v_{2n+1-i} +  q v_{2n+1-i} v_i, \quad  i\le n-1.
\end{aligned}
\end{eqnarray}

If we ignore the relation arising from the $i=n-1$
case of the third equation in \eqref{vv-relations},
the remaining relations define an algebra which satisfies the conditions
of Lemma \ref{BK-lemma}.  Therefore, $\Bbbk\{V, I\}$ is Noetherian.
It is known \cite{BZ, Zw} that $\Bbbk\{V, I\}$ admits a PBW basis, and hence
is a Noetherian quantum symmetric algebra.

We now turn to Theorem \ref{mSq}.

\begin{proof}[Proof of Theorem \ref{mSq}]
It was shown in \cite{LZZ} that $S_q(V^m)$ admits a
PBW basis for every $m$, whence
it suffices to prove that this algebra is Noetherian.

If $V$ is the natural $\U_q(so_{2n})$-module,
we may regard $S_q(V^m)$ as generated by $X_{i a}$
with $1\le i\le  m$ and $1\le a\le 2n$,
subject to the relations R(1) and R(2) below:
\begin{enumerate}
\item[R(1):] For any $i$,
\[
\begin{aligned}
& X_{i b} X_{i a} = q X_{i a}  X_{i b} ,
\quad a<b\le 2n, \ a+b\ne 2n+1,  \\
& X_{i, n+1} X_{i n} = X_{i n} X_{i, n+1},\\
&X_{i, 2n-s} X_{i, s+1}= q^2 X_{i, s+1} X_{i, 2n-s}
-q X_{i s}  X_{i, 2n+1-s} +  q X_{i, 2n+1-s} X_{i s}, \quad  s\le n-1.
\end{aligned}
\]
\item[R(2):] For $i<j$ and all $a, b$,
\[
X_{j b} X_{i a} =  q_{a b}^{-1}  X_{i a} X_{j b}
+ \sum_t (\beta'_t\cdot X_{i a} ) (\alpha'_t\cdot X_{j b} ),
\]
where $q_{a b}$ is $q^{-1}$ if $a=b$,
is $q$ if $a+b=2n+1$,  and is $1$ otherwise.
\end{enumerate}
Here $\sum_a \Bbbk X_{i a} \cong V$
as $\U$-module for each $i$, and 
we use the form $R=K+\sum_t \alpha'_t\otimes \beta'_t$
for the universal $R$-matrix, where $K$ acts by $K (X_{j b}\otimes X_{i a})
= q_{a b}^{-1}X_{j b}\otimes X_{i a}$.
Actions of $\alpha'_t$ (resp. $\beta'_t$) increase (resp. decrease) weights. We have
$\beta'_t\cdot X_{i a}= \zeta_{a t} X_{i a_t}$ and
$\alpha'_t\cdot X_{j b}= \eta_{t b} X_{j b_t}$ for some
$a_t>a$ and $b_t<b$, where $\zeta_{t a}$ and $\eta_{t b}$
are scalars such that $\zeta_{t a}\eta_{t b}\ne 0$ only for finitely many $t$.

Order the elements $X_{i a}$ as follows:
\[
X_{m 1}, X_{m 2}, \dots, X_{m, 2n}; X_{m-1, 1}, X_{m-1, 2}, \dots, X_{m-1, 2n};\dots;
X_{1 1}, X_{1 2}, \dots, X_{1, 2n}.
\]
Note that relations R(1) are the same as \eqref{vv-relations}
with $v_a$ replaced by $X_{i a}$, and the order of the elements
agrees with that of the $v_a$. The relations R(2) may be re-written as
\[
\begin{aligned}
X_{i a} X_{j b}&= q_{a b} X_{j b} X_{i a}
- q_{a b}\sum_t (\beta'_t\cdot X_{i a} ) (\alpha'_t\cdot X_{j b} )\\
&= q_{a b} X_{j b} X_{i a} - q_{a b}\sum_t \zeta_{a t} \eta_{t b} X_{i a_t}  X_{j b_t}
\end{aligned}
\]
for $i<j$ and all $a, b$. These relations are in the form of \eqref{Noether-relation}.
Therefore $S_q(V^m)$ meets the conditions of
Lemma \ref{BK-lemma}, and hence is Noetherian. This completes the proof
for the case of $\U_q(so_{2n})$

When $V$ is the natural $\U_q(so_{2n+1})$-module or natural
$\U_q(sp_{2n})$-module, there are defining relations of $S_q(V^m)$
analogous to those in the $\U_q(so_{2n})$ case \cite{LZZ}, and
similar reasoning shows that $S_q(V^m)$ is also Noetherian.
We leave the details of the proof in these cases to the reader.
\end{proof}

In view of Theorem \ref{mSq}, we may now apply Theorem \ref{usual}
and Theorem \ref{q-symmalg} to compute the ordinary
and equivariant K-groups of $S_q(V^m)$. We have
\[
K_i(S_q(V^m)) \cong K_i(\Bbbk), \quad
K^{\Uq}_i(S_q(V^m)) \cong K_i(\text{$\Uq$-{\bf mod}}), \quad
\text{for all $i$},
\]
where $\Uq$ is $\U_q(so_m)$ or $\U_q(sp_{2n})$.
\end{example}

\begin{example}
We consider the Weyl algebra $W_q$ of degree $1$ over
$\Bbbk=\C(q)$. It is  generated by $x, y$ subject to the relation
\[ x y - q^{-1} y x =1. \]
This is a module algebra over $\U_q(sl_2)$ if we
identify $\{x, y\}$ with the standard basis of the natural $\U_q(sl_2)$-module
$\Bbbk^2$.

Let $F_i W_q$ be the span of the elements $x^{j-t} y^t$ with $i\ge j
\ge t\ge 0$. Then we have a complete ascending filtration $0\subset
F_0 W_q \subset F_1 W_q \subset F_2 W_q\subset \dots$, which is
stable under the $\U_q(sl_2)$-action. The associated graded algebra
$gr(W_q)$ is the $\U_q(sl_2)$-module algebra generated by $x, y$
subject to the relation $x y = q^{-1} y x$. By Lemma
\ref{regular-qsymm}, this algebra is left regular.

Therefore Theorem \ref{usual} and Theorem \ref{q-symmalg} apply 
to $W_q$, we obtain 
$K_i(W_q) = K_i(\Bbbk)$ and
$K^{\U_q(sl_2)}_i(W_q)=K_i(\cP(\Bbbk, \U_q(sl_2)))$ ($i\ge 0$)
for the Quillen K-groups and equivariant K-groups respectively.
In the present case, $\cM(\Bbbk, \U_q(sl_2))=\cP(\Bbbk,
\U_q(sl_2))=\U_q(sl_2)\text{-\bf mod}$ is the category of finite
dimensional $\U_q(sl_2)$-modules of type-$(1, \dots, 1)$. 
Hence $K^{\U_q(sl_2)}_i(W_q)=K_i(\U_q(sl_2)\text{-\bf mod})$ for all $i\ge 0$. 
\end{example}

\section{Quantum homogeneous spaces}\label{q-homo-spaces}

Quantum homogeneous spaces \cite{GZ} are a class of noncommutative
geometries with quantum group symmetries, which have been widely studied
(see, e.g., \cite{DLPS, Maj} and the references therein). We shall develop their
equivariant K-theory in this section. We mention that the
equivariant $K_0$-groups of quantum homogeneous spaces have been
determined in \cite{ZZ}.

\subsection{Quantum homogenous spaces}\label{spaces}
We recall from \cite{GZ, ZZ} some background material on  quantum homogeneous spaces,
which will be needed later.
Let $V$ be an object in $\U$-{\bf mod}, and let $\pi: \U\longrightarrow \End_\Bbbk(V)$
be the corresponding matrix representation of $\U$ relative to some basis of $V$. Then there exist
elements $t_{i j}$ ($i, j=1, 2, \dots, \dim V$) in the dual $\U^*$ of $\U$ such that for
any $x\in \U$, we have $\pi(x)_{i j} = \langle t_{i j}, x\rangle$ for all $i, j$.
The $t_{i j}$ will be referred to as 
the coordinate functions of the finite dimensional representation $\pi$.
It follows from standard facts in Hopf algebra theory \cite{M} that
the coordinate functions of all the various $\U$-representations associated with
the $\U$-modules in $\U$-{\bf mod} span a Hopf algebra $\cO_q(\U)$,
which is a Hopf subalgebra of the finite dual of $\U$ (see \cite{M} for this notion).
We shall denote the co-multiplication
and the antipode of $\cO_q(\U)$ by $\Delta$ and $S$ respectively,
but the co-unit will be denoted by $\epsilon_0$. Note that the
co-unit (resp. unit) of $\U$ becomes the unit (resp. co-unit) of
$\cO_q(U)$.

We shall denote $\cO_q(\U_q(\fg))$ by $A_\fg$ for brevity.
There exist two natural actions $R$ and $L$ of $\U$ on $A_\fg$
\cite{GZ}, which correspond to left and right translation in
the context of Lie groups. These actions are respectively defined by
\begin{eqnarray*}
R_x f=\sum_{(f)}f_{(1)}<f_{(2)},x>, && L_x
f=\sum_{(f)}<f_{(1)},S(x)>f_{(2)}
\end{eqnarray*}
for all $x\in{\U}$ and $f\in{A_\fg}$, where
as $f(x)=\langle f , x \rangle$ for any $x\in\U$ and $f$ in the finite
dual of $\U$. These two actions clearly commute by the coassociativity
of the Hopf structure on $A_\fg$.

It can be shown that $A_\fg$ forms a $\U$-module algebra under both
actions. However, some care needs to be exercised in the case of
$L$, as for any $f, g\in A_\fg$ and $x\in\U$, we have
\[
\begin{aligned}
L_x(f g) &= \sum_{(f), (g)} \langle f_{(1)}g_{(1)}, S(x)\rangle
f_{(2)}g_{(2)} \\ & = \sum_{(f), (g), (x)} \langle f_{(1)},
S(x_{(2)})\rangle \langle g_{(1)}, S(x_{(1)})\rangle
f_{(2)}g_{(2)}\\
&= \sum_{(x)} L_{x_{(2)}}(f) L_{x_{(1)}}(g).
\end{aligned}
\]
This shows that  under the action $L$,  $A_\fg$ forms a module
algebra over $\U'$, which is $\U$, with the opposite co-multiplication
$\Delta'$.

Let $\Theta$ be a subset of $\{1, 2, \dots, r\}$, where $r$ is the
rank of $\fg$. We denote by $\Ul$ the Hopf subalgebra of $\U$
generated by the elements of $\{k_i^{\pm} \mid 1\le i\le r\}\cup \{
e_j,f_j \mid j\in{\Theta}\}$. We denote by $\Ulmod$ the category of
finite dimensional left $\Ul$-modules of type-$(1,
\dots, 1)$. This category is semisimple.

Following \cite{GZ}, we define
\begin{eqnarray}\label{Agl}
A= \left\{ f\in A_\fg\, \mid \, L_x (f) = \epsilon(x) f, \ \forall
x\in\Ul \right\}=A_\fg^{\Ul}.
\end{eqnarray}
This is the submodule of $\Ul$-invariants of $A_\fg$.

The following result is fairly straightforward (cf. \cite{GZ, ZZ}).
\begin{theorem} \label{noetherian}
The subspace $A$ forms a locally finite $\U$-module algebra under the action $R$.
Furthermore, $A$ is (both left and right) Noetherian.
\end{theorem}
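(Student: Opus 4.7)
Part 1 of the theorem is essentially formal, so I would dispatch it first. The key observation is that the two actions $R$ and $L$ of $\U$ on $A_\fg$ commute: indeed, by coassociativity of the comultiplication on $A_\fg$, for any $f \in A_\fg$ and any $x, y \in \U$,
\[
L_x R_y f \;=\; \sum_{(f)} \langle f_{(1)}, S(x)\rangle f_{(2)} \langle f_{(3)}, y\rangle \;=\; R_y L_x f.
\]
Hence if $f \in A$ and $x \in \Ul$, $y \in \U$, then $L_x(R_y f) = R_y(L_x f) = \epsilon(x) R_y f$, so that $R_y f \in A$. Thus $A$ is an $R$-stable $\U$-submodule of $A_\fg$. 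To see that it is a subalgebra, recall from the discussion preceding the theorem that $L$ makes $A_\fg$ a module algebra over $\U'$ (i.e.\ $\U$ with opposite comultiplication), so for $f, g \in A$ and $x \in \Ul$,
\[
L_x(fg) \;=\; \sum_{(x)} L_{x_{(2)}}(f)\, L_{x_{(1)}}(g) \;=\; \sum_{(x)} \epsilon(x_{(2)})\epsilon(x_{(1)}) fg \;=\; \epsilon(x) fg.
\]
The fact that $A_\fg$ is an $R$-module algebra therefore restricts to $A$. Local finiteness of $A$ is inherited from that of $A_\fg$, which is immediate from the formula $R_y f = \sum \langle f_{(2)}, y\rangle f_{(1)}$: the subspace $R_\U f$ sits in the finite-dimensional span of the first tensor factors appearing in $\Delta(f)$.

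Part 2, the Noetherianness of $A$, is the substantive part. The approach I would take is via the Peter--Weyl decomposition
\[
A_\fg \;\cong\; \bigoplus_{\lambda \in P^+} V_\lambda^* \otimes V_\lambda,
\]
as a $(\U' , \U)$-bimodule, where $\U'$ acts on $V_\lambda^*$ via $L$ and $\U$ acts on $V_\lambda$ via $R$. Since $\Ul$ corresponds to a Levi-type (and in particular reductive) quantum subgroup, the functor of $\Ul$-invariants on $\Ulmod$ is exact, and taking $\Ul$-invariants in the left tensor factor gives
\[
A \;\cong\; \bigoplus_{\lambda \in P^+} (V_\lambda^*)^{\Ul} \otimes V_\lambda,
\]
exhibiting $A$ as a sum of its finite-dimensional $R$-isotypic components. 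To prove Noetherianness, I would equip $A_\fg$ with a $\U$-stable, $L$-compatible ascending filtration whose associated graded $\mathrm{gr}(A_\fg)$ is a quantum symmetric algebra of the type studied in Section~\ref{quantum-symmalg}, and hence left and right Noetherian by the results (e.g.\ Theorem~\ref{mSq} and its analogues) established there. A natural candidate is the filtration coming from a reduced decomposition of the longest Weyl group element, giving a quantum analogue of a PBW filtration on the coordinate ring.

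The filtration restricts to $A$, and the crucial point is to identify $\mathrm{gr}(A)$ with $\bigl(\mathrm{gr}(A_\fg)\bigr)^{\Ul}$; this uses reductivity of $\Ul$ (so that $\Ul$-invariants commute with the relevant direct limits and quotients). Once this identification is made, $\mathrm{gr}(A)$ is a $\U$-stable subalgebra of the Noetherian algebra $\mathrm{gr}(A_\fg)$, and by a quantum analogue of the Hilbert--Nagata theorem (which is available in this reductive setting) it is finitely generated, hence Noetherian. Noetherianness of $A$ then follows by the standard filtered-to-graded principle (\cite[Lemma 3.(i), p.119]{Q}), and the same argument applied with $R$ replaced by the right-translation action yields right Noetherianness. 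The hard step is the commutation of $\Ul$-invariants with the associated graded construction together with finite generation of $\mathrm{gr}(A)$; everything else is either formal or invokes results already available in the paper or in the cited literature.
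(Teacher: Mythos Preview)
Your treatment of Part~1 (that $A$ is a locally finite $\U$-module algebra under $R$) is correct and coincides with the paper's argument: the paper observes exactly that $L$ and $R$ commute by coassociativity, that the invariant subspace of a module algebra under a Hopf subalgebra is a subalgebra, and that local finiteness is inherited from $A_\fg$ (via the finite dual).

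For Part~2 the paper does not give an argument at all; it simply cites \cite{ZZ}. Your proposed route is therefore not the paper's, and it has a genuine gap. The crucial step you write is ``by a quantum analogue of the Hilbert--Nagata theorem \ldots\ it is finitely generated, hence Noetherian.'' In the noncommutative setting the implication \emph{finitely generated $\Rightarrow$ Noetherian} is false in general (the free algebra on two generators is the standard counterexample), so even granting a Hilbert--Nagata statement, you cannot conclude that $\mathrm{gr}(A)$ is Noetherian from finite generation alone. You would need to exhibit $\mathrm{gr}(A)$ as a quotient of something already known to be Noetherian, or show it is itself of PBW type, neither of which follows from being a $\Ul$-invariant subalgebra of a Noetherian quantum symmetric algebra. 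Secondly, the existence of a $\U$- and $L$-compatible filtration on $A_\fg$ whose associated graded is a quantum symmetric algebra in the sense of Section~\ref{quantum-symmalg} is asserted but not justified; the results there (e.g.\ Theorem~\ref{mSq}) cover only natural modules of classical type, not the coordinate ring of an arbitrary simple $\fg$.

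A more robust line, and closer to what \cite{ZZ} actually does, is to start from the known Noetherianity of $A_\fg$ itself (see \cite[\S I]{BK}) and then descend: one shows, using semisimplicity of $\Ulmod$, that there is an $A$-bimodule projection $A_\fg \to A$ (a Reynolds operator), so that $A$ is a direct $A$-bimodule summand of the Noetherian ring $A_\fg$; intersecting an ascending chain of one-sided ideals of $A$ with the stabilising chain $A_\fg\cdot(-)$ in $A_\fg$ then forces stabilisation in $A$. This avoids both the unsupported filtration claim and the false ``finitely generated $\Rightarrow$ Noetherian'' step.
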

Indeed, since $\Ul$ is a Hopf subalgebra of $\U$, it follows from
the definition that $A$ is a subalgebra of $A_\fg$. Since left
and right translations commute, the $\U$-module algebra structure of
$A_\fg$ under $R$ descends to $A$. Being a subalgebra of $A_\fg$
which is contained in the finite dual
of $\U$, $A$ must be locally finite under the $\U$-action $R$.
The fact that $A$ is Noetherian
is proved in \cite{ZZ}.

It was shown in \cite{GZ} that the algebra $A$ is the
natural quantum analogue of the algebra of complex valued (smooth) functions on
the real manifold $G/K$ for a compact connected Lie group $G$
and a closed subgroup $K$, where the
Lie algebras ${\mathcal Lie}(G)$ and ${\mathcal Lie}(K)$ have
complexifications $\fg$ and $\fl$ respectively.
Thus the noncommutative space determined by the algebra $A$
is referred to as a {\em quantum homogeneous space} in \cite{GZ}.

\begin{remark}
A quantum homogeneous space defined this way is a
quantisation of the real manifold underlying a
compact homogeneous space.
A complex structure (in a generalised  sense) on the quantum homogeneous space
was discussed and used in establishing Borel-Weil type theorems in \cite{GZ}.
\end{remark}

\subsection{Equivariant $K$-theory of quantum homogeneous
spaces}\label{K-theory}

For any object $\Xi$ of $\Ulmod$, we define
the induced $\U$-module as follows:
\begin{equation}\label{sections}
\begin{aligned}
\cS(\Xi):=&\left\{\zeta\in  \Xi\otimes A_\fg    \left|
\sum_{(x)}(x_{(1)} \otimes L_{x_{(2)}}) \zeta = \epsilon(x)\zeta, \
\forall x\in\Ul\right. \right\}\\
=&\left(\Xi\otimes_\Bbbk A_\fg\right)^{\Ul},
\end{aligned}
\end{equation}
where $\Ul$ acts on the tensor product via $(\id\otimes L)\circ \Delta$.

Then $\cS(\Xi)$ is both a left $A$-module and left $\U$-module with
$A$ and $\U$ actions defined, for $b\in A$, $x\in\U$ and
$\zeta=\sum v_i \otimes a_i\in\cS(\Xi)$, by
\begin{eqnarray}
b\zeta &=& \sum v_i\otimes b a_i, \label{A-action}\\ x \zeta
&=&(\id_\Xi \otimes R_x)\zeta = \sum v_i \otimes
R_x(a_i).\label{U-action}
\end{eqnarray}
We have
\begin{eqnarray*}
x(b\zeta) &= \sum v_i\otimes R_x(b a_i)
&=\sum_{(x)}R_{x_{(1)}}(b)(x_{(2)}\zeta), \quad \text{for $b\in A$}.
\end{eqnarray*}
Thus $\cS(\Xi)$ indeed forms a $\U$-equivariant ${A}$-module.

The following results were proved in \cite{GZ, ZZ}.
\begin{theorem}\label{key}
\begin{enumerate}
\item Let $V$ be the restriction of a finite dimensional
$\U$-module to a $\Ul$-module. Then $\cS(V)\cong V\otimes_{\Bbbk}A$
in $\cM(A, \U)$.
\item For any object $\Xi$ in $\Ulmod$, $\cS(\Xi)$ is
an object of $\cP(A, \U)$.
\end{enumerate}
\end{theorem}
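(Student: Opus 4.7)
The theorem has two parts. Part (1) is the core computational input, and part (2) will follow from it by an additive-functor and semisimplicity argument, so I focus first on (1).

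For (1), the plan is to construct an explicit isomorphism $\Phi\colon V\otimes_{\Bbbk}A\to\cS(V)$ in $\cM(A,\U)$ using the right $A_\fg$-comodule structure on $V$ induced by its $\U$-module structure (this is what uses the hypothesis that $V$ extends to a $\U$-module). Fix a basis $\{v_j\}$ of $V$ and let $t^V_{ij}\in A_\fg$ denote the matrix coefficients, characterised by $x\cdot v_j=\sum_i t^V_{ij}(x)v_i$ for $x\in\U$. Define
\[\Phi(v_j\otimes a)=\sum_i v_i\otimes a\,t^V_{ij}.\]

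The verifications proceed as follows. First, $\Phi(v_j\otimes a)\in\cS(V)$: using $L_y(a)=\epsilon(y)a$ for $a\in A$ and $y\in\Ul$, the $\Ul$-invariance of the image reduces to the antipode identity $\sum_i t^V_{ki}(y_{(1)})t^V_{il}(S(y_{(2)}))=\epsilon(y)\delta_{kl}$, a direct consequence of $\sum g_{(1)}S(g_{(2)})=\epsilon(g)1$ applied to a matrix coefficient. Second, $A$-linearity is immediate, since both sides act by left multiplication on the second tensor factor. Third, $\U$-linearity follows from the fact that $R$ is a module-algebra action on $A_\fg$ with $R_x(t^V_{ij})=\sum_k t^V_{ik}\,t^V_{kj}(x)$, combined with the convention that $V\otimes_{\Bbbk}A$ inherits its $\U$-action from $V_A=A\otimes_{\Bbbk}V$ of Lemma \ref{free-module}. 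For bijectivity, the formal matrix inverse of $(t^V_{ij})$ in $M_n(A_\fg)$ is $(S(t^V_{ij}))$, and this gives the inverse of $\Phi$ after using the $\Ul$-invariance of elements of $\cS(V)$ to permit the necessary rearrangement of the noncommutative product; alternatively one may invoke the faithful flatness (Hopf--Galois) property of $A_\fg$ over $A$, which is standard for quantum homogeneous extensions.

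For (2), given $\Xi\in\Ulmod$, the plan is to realise $\Xi$ as a $\Ul$-direct summand of $W|_{\Ul}$ for some $W\in\Umod$ and then apply part (1) together with functoriality of $\cS$. This realisation is possible because $\Ulmod$ is semisimple (every locally finite $\U$-module is semisimple, and hence so is every locally finite $\Ul$-module), while every simple $\Xi\in\Ulmod$ appears as a constituent of $W|_{\Ul}$ for some simple $W\in\Umod$ by a standard highest-weight argument (any dominant integral weight of $\fl$ can be realised as the restriction of an appropriate dominant integral weight of $\fg$). Since $\cS$ is an additive functor $\Ulmod\to\cM(A,\U)$ (morphisms acting as $\id_{A_\fg}$ tensored with the given $\Ul$-map), $\cS(\Xi)$ is a direct summand of $\cS(W|_{\Ul})\cong W\otimes_{\Bbbk}A$, a free $A$-$\U$-module of finite rank and hence an object of $\cP(A,\U)$. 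Local $\U$-finiteness and finite $A$-generation of $\cS(\Xi)$ are inherited from $W\otimes_{\Bbbk}A$, so $\cS(\Xi)\in\cM(A,\U)$; projectivity in $A$-$\U$-{\bf mod} then follows from Corollary \ref{projective}.

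The main obstacle is bijectivity in part (1): in the noncommutative setting, the naive inversion of $\Phi$ by right multiplication with $S(t^V_{ij})$ fails because $a\in A$ need not commute with matrix coefficients in $A_\fg$. Resolving this requires either a careful construction exploiting $\Ul$-invariance of elements of $\cS(V)$, or an abstract appeal to faithful flatness of $A_\fg$ over $A$. Once (1) is established, the semisimplicity input for (2) is the only further representation-theoretic fact needed, and Corollary \ref{projective} closes the argument.
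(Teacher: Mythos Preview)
Your derivation of part (2) from part (1) is exactly the argument the paper records immediately after the statement: embed $\Xi$ as a $\Ul$-summand of some $W\in\Umod$, apply (1) to get $\cS(\Xi)\oplus\cS(\Xi^\bot)\cong W\otimes_\Bbbk A$, and invoke Corollary \ref{projective}. The paper does not prove part (1) itself but defers to \cite{GZ, ZZ}; your explicit map $\Phi(v_j\otimes a)=\sum_i v_i\otimes a\,t^V_{ij}$ is the natural one (indeed it coincides with the map $\hat\delta_{V\otimes A}$ introduced later in the paper, once one identifies $\cE(V\otimes_\Bbbk A)\cong V$), and your verifications that it lands in $\cS(V)$ and is $A$- and $\U$-linear are correct.

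The only point not settled in your proposal is bijectivity, which you rightly flag as the crux. Your first route can be made to work, but not quite as you suggest: for $\zeta=\sum_i v_i\otimes f_i\in\cS(V)$ one can set $a_j=\sum_i f_i\,S(t^V_{ji})$ and use the $\Ul$-invariance of $\zeta$ (in the form $\sum_i t^V_{ki}(y_{(1)})\,L_{y_{(2)}}(f_i)=\epsilon(y)f_k$) together with $L_y(S(t^V_{ji}))=\sum_k t^V_{ki}(y)\,S(t^V_{jk})$ to check $a_j\in A$; however, verifying that this inverts $\Phi$ still runs into $\sum_j S(t^V_{jk})\,t^V_{ij}$, which is \emph{not} one of the two standard antipode identities in the noncommutative algebra $A_\fg$. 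The clean resolution is to extend $\Phi$ to $\tilde\Phi$ on all of $V\otimes A_\fg$, observe (as you implicitly do) that $\tilde\Phi$ intertwines the $\Ul$-action $\id\otimes L$ on the source with the diagonal action defining $\cS(V)$ on the target, and then argue that $\tilde\Phi$ is bijective---either by a Peter--Weyl multiplicity count, or by using that for $\U_q(\fg)$ the square of the antipode is inner, which makes the transpose of the matrix $(t^V_{ij})$ invertible in $M_n(A_\fg)$. Your Hopf--Galois alternative is also valid but imports a substantial external theorem. So the strategy is correct and aligned with the cited sources; what remains is to execute one of these bijectivity arguments rather than point to them.
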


Recall from \cite{GZ} that part (2) of the theorem follows from part
(1). Indeed, $\Xi$ can always be embedded in the restriction of some
finite dimensional $\U$-module as a direct summand. That is, there
exist a finite dimensional $\U$-module $W$ and a $\Ul$-module
$\Xi^\bot$ such that $W\cong\Xi\oplus\Xi^\bot$ as $\Ul$-module. It
follows from part (1) of Theorem \ref{key} that
$\cS(\Xi)\oplus\cS(\Xi^\bot)\cong W\otimes A$. By Corollary
\ref{projective}, $\cS(\Xi)$ is in $\cP(A, \U)$.

We extend \eqref{sections} to a covariant functor
\begin{eqnarray}\label{S-functor}
\cS: \Ulmod\longrightarrow \cP(A, \U),
\end{eqnarray}
which acts on objects of $\Ulmod$ according to \eqref{sections}
and sends a morphism $f$ to $f\otimes{\rm{id}}_{A_\fg}$. Since
$\Ulmod$ is semi-simple and $\cS(V\oplus W) = \cS(V)\oplus \cS(W)$
for any direct sum $V\oplus W$ of objects in $\Ulmod$, the functor
$\cS$ is exact.

Let $I=\{f \in A |f(1)=0 \}$; this is a maximal two-sided ideal of
$A$. We have $A/I\cong\Bbbk$. For any $x\in\Ul$ and $a\in I$,
$\langle R_x(a), 1\rangle = \epsilon(x) a(1)=0$, thus $I$ forms a
$\Ul$-algebra under the restriction of the action $R$. This
implies that for any $\U$-equivariant ${A}$-module $M$,
$I M$ is a $\Ul$-equviaraint ${A}$-submodule of $M$. This can be
seen from the following calculation: for any $a\in I$ and $m\in M$,
we have $x(a m) = \sum_{(x)} R_{x_{(1)}}(a) x_{(2)}m\in I M$ for all
$x\in\Ul$.

Given a $\U$-equivariant $A$-module $M$, let $ M_0=M/IM $;
this is a $\Ul$-equivariant $A$-module in which $a\in A$
acts as $a(1)\in\Bbbk$. Denote the natural surjection by
\begin{eqnarray}\label{p-map}
p:  M\longrightarrow M_0.
\end{eqnarray}
This is an $A$-$\Ul$-linear map.

For any object $M$ in $\cM(A, \U)$, we can find a finite
dimensional $\U$-submodule $W$ which generates $M$. By Lemma
\ref{free-module}, the $A$-$\U$-map $A\otimes_{\Bbbk}
W\longrightarrow M$, $a\otimes w\mapsto a w$, is surjective.
Therefore $M_0 = p(W)$. Note that $W$ is semi-simple as $\U$-module and
hence also as $\Ul$-module. Thus $M_0$ belongs to $\Ulmod$.

We therefore have a covariant functor
\[
\cE: \cM(A, \U)\longrightarrow \Ulmod,
\]
which sends an object $M$ in
$\cM(A, \U)$ to $M_0$, and is defined on morphisms in the obvious
way. We may restrict this functor to the full subcategory $\cP(A,
\U)$ to obtain a covariant functor
\begin{eqnarray}\label{evalute}
\cE_\cP: \cP(A, \U)\longrightarrow \Ulmod.
\end{eqnarray}

\begin{theorem} \label{main}
The functors $\cS:\Ulmod\longrightarrow \cP(A, \U)$ and $\cE_\cP:
\cP(A, \U)\longrightarrow\Ulmod$ respectively defined by
\eqref{S-functor} and \eqref{evalute} are mutually inverse equivalences of
categories.
\end{theorem}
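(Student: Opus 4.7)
The plan is to exhibit natural transformations $\bar{\mathrm{ev}}:\cE_\cP\cS\to\id_{\Ulmod}$ (the ``fiber at the identity coset'') and $\varphi:\id_{\cP(A,\U)}\to\cS\cE_\cP$ (the ``section/coaction map'') and to show that both are natural isomorphisms. Once that is established, $\cS$ and $\cE_\cP$ are mutually inverse equivalences of categories. The two technical inputs are Theorem \ref{key}(1), which identifies $\cS(V|_{\Ul})$ with the free module $V_A$ for $V\in\Umod$, and Corollary \ref{projective}, which expresses any object of $\cP(A,\U)$ as a direct summand of a free $A$-$\U$-module.

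\smallskip

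Construction of the maps: For $\Xi\in\Ulmod$ the evaluation $\cS(\Xi)\to\Xi$, $\sum v_i\otimes a_i\mapsto\sum\epsilon_0(a_i)v_i$, is $\Ul$-linear (since $R$ commutes with $L$) and kills $I\cdot\cS(\Xi)$ because $\epsilon_0$ vanishes on $I$; it therefore descends to $\bar{\mathrm{ev}}_\Xi:\cS(\Xi)_0\to\Xi$. For $M\in\cP(A,\U)$ the locally $\U$-finite action dualises to a coaction $\delta_M:M\to M\otimes A_\fg$, and I would set $\varphi_M:=(p\otimes\id)\circ\delta_M$, where $p:M\to M_0$ is the canonical projection. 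A direct Sweedler-notation computation, using that $p$ is $\Ul$-equivariant, that $\delta_M$ intertwines the $\U$-action on $M$ with the right action on $A_\fg$ (so $\delta_M(xm)=(\id\otimes R_x)\delta_M(m)$), and the coassociativity of $\delta_M$, shows that $\varphi_M(M)\subseteq(M_0\otimes A_\fg)^{\Ul}=\cS(M_0)$ and that $\varphi_M$ is $A$-linear, $\U$-equivariant, and natural in $M$.

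\smallskip

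Verification on free objects, then extension: For any $V\in\Umod$ the free module $V_A=V\otimes A$ has $\cE_\cP(V_A)=V\otimes(A/I)\cong V|_{\Ul}$, while Theorem \ref{key}(1) supplies an isomorphism $\Psi_V:\cS(V|_{\Ul})\xrightarrow{\sim}V_A$ in $\cM(A,\U)$. Tracing definitions through $\Psi_V$ identifies $\varphi_{V_A}$ with $\Psi_V^{-1}$ and $\bar{\mathrm{ev}}_{V|_{\Ul}}$ with $\id_V$; both are therefore isomorphisms on free objects. For a general $M\in\cP(A,\U)$, Corollary \ref{projective} realises $M$ as a direct summand $V_A=M\oplus M'$, and because $\varphi$ is additive and natural, the isomorphism $\varphi_{V_A}=\varphi_M\oplus\varphi_{M'}$ forces $\varphi_M$ to be an isomorphism. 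For $\Ulmod$ I would invoke the standard fact (valid because $\Ul$ is a Levi-type quantum subgroup, generated by a subset of the Chevalley generators together with the full Cartan) that every simple object of $\Ulmod$ appears as a summand of the restriction $V|_{\Ul}$ of some $V\in\Umod$; combined with the semisimplicity of $\Ulmod$, any $\Xi\in\Ulmod$ is a summand of some $V|_{\Ul}$, and the same additivity argument forces $\bar{\mathrm{ev}}_\Xi$ to be an isomorphism.

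\smallskip

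Main obstacle: The principal delicate step is the well-definedness of $\varphi_M$, namely the claim $\varphi_M(M)\subseteq\cS(M_0)$. This is where the interaction between $\delta_M$ (coming from the $\U$-action), the left action $L$ defining the $\Ul$-invariants in $\cS$, and the antipode, must be tracked carefully; coassociativity of $\delta_M$ and the defining formula $L_y f=\sum\langle f_{(1)},S(y)\rangle f_{(2)}$ convert the $(\id\otimes L)\Delta$-invariance condition into an identity that is ultimately forced by the $\Ul$-equivariance of $p$ and by $\sum S(x_{(1)})x_{(2)}=\epsilon(x)$. The secondary issue, that every $\Xi\in\Ulmod$ embeds as a summand of some $V|_{\Ul}$, is standard for reductive Levi-type $\Ul$ but should be invoked explicitly; once both of these are in hand, $\varphi$ and $\bar{\mathrm{ev}}$ are natural isomorphisms and the theorem follows.
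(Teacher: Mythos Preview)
Your proposal is correct and follows essentially the same route as the paper: the paper constructs the same two natural transformations (called $\epsilon$ and $\hat\delta$ there, built respectively from evaluation at $1$ and from the coaction map $(p\otimes\id)\circ\delta_M$), verifies the invariance condition $\hat\delta_M(M)\subseteq\cS(\cE(M))$ by precisely the Sweedler computation you anticipate as the ``main obstacle'', and shows both are isomorphisms by reducing to the free case via Theorem~\ref{key}(1) together with the direct-summand embedding of an arbitrary $\Xi\in\Ulmod$ into some $W|_{\Ul}$. The only cosmetic difference is that the paper defers the isomorphism property of $\hat\delta_M$ on projectives to a citation (Lemma~\ref{delta-2}, from \cite{ZZ}), whereas you spell out the additivity/direct-summand argument explicitly.
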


We will prove the theorem in Section \ref{proof-main}. It
has the following consequence.

\begin{corollary}\label{main-1}
There is an isomorphism of abelian groups
\[ K^\U_i(A)  \cong K_i(\Ulmod)\]
for  each   $i\ge 0$, where $\Ulmod$ is the category of finite
dimensional left $\Ul$-modules of type-$(1, \dots, 1)$ (which is semi-simple).
\end{corollary}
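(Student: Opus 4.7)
The plan is to deduce the corollary as a direct consequence of Theorem \ref{main}, once we know the equivalence of categories $\cS : \Ulmod \leftrightarrows \cP(A,\U) : \cE_\cP$ is exact in both directions. Since Quillen's algebraic K-groups are functorial with respect to exact functors of exact categories, and exact equivalences induce mutually inverse isomorphisms on each $K_i$, the desired isomorphism will follow immediately together with Definition \ref{equivKgps}, which identifies $K^\U_i(A)$ with $K_i(\cP(A,\U))$.

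First I would record that both categories carry natural exact structures: $\Ulmod$ is abelian (in fact semisimple, so every short exact sequence splits), and $\cP(A,\U)$ is exact in the sense discussed in Section \ref{definitions-K}, being closed under extensions inside the abelian category $A$-$\U$-$\mathbf{mod}$. The functor $\cS$ is already observed in the paragraph following \eqref{S-functor} to be exact: indeed, on the semisimple category $\Ulmod$ it suffices to check that $\cS$ preserves finite direct sums, which is immediate from the description $\cS(\Xi)=(\Xi\otimes_\Bbbk A_\fg)^{\Ul}$ and the compatibility $\cS(V\oplus W)\cong \cS(V)\oplus\cS(W)$ already noted.

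Next I would verify that $\cE_\cP$ is exact. Given a short exact sequence $0\to P'\to P\to P''\to 0$ in $\cP(A,\U)$, applying $-\otimes_A(A/I)$ produces the sequence $0\to P'_0\to P_0\to P''_0\to 0$, which is exact because $P''$ is $A$-projective and hence $A$-flat, so $\Tor^A_1(A/I,P'')=0$. The $A$-$\Ul$-linearity of the map $p$ in \eqref{p-map} shows this is a sequence in $\Ulmod$. Thus $\cE_\cP$ is exact. This is the main technical point in the argument; alternatively, one could deduce exactness of $\cE_\cP$ purely categorically from it being a quasi-inverse to the exact functor $\cS$ between categories in which every short exact sequence in the target splits, but the $\Tor$-vanishing argument is more transparent.

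Finally I would invoke the general principle \cite[Theorem 4.6]{Sr} that a pair of mutually quasi-inverse exact functors between exact categories induces inverse isomorphisms $\cS_*$ and $(\cE_\cP)_*$ on Quillen K-groups. Combining this with Theorem \ref{main} yields, for each $i\ge 0$, isomorphisms
\[
K_i(\Ulmod) \xrightarrow{\ \cS_*\ } K_i(\cP(A,\U)) = K^\U_i(A),
\]
where the last equality is Definition \ref{equivKgps}. This completes the proof. I do not anticipate any genuine obstacle, since all the substantive work has already been done in establishing Theorem \ref{noetherian} and Theorem \ref{main}; the only small check needed is the exactness of $\cE_\cP$ on $\cP(A,\U)$, and this is where the projectivity hypothesis defining $\cP(A,\U)$ is used in an essential way.
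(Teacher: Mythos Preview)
Your proposal is correct and follows essentially the same approach as the paper, which treats the corollary as an immediate consequence of Theorem \ref{main} without further argument. Your added verification that both $\cS$ and $\cE_\cP$ are exact is the natural way to make this precise; note however that your citation of \cite[Theorem 4.6]{Sr} is misplaced (that is the Resolution Theorem), and that exactness of $\cE_\cP$ can also be seen more cheaply by observing that every short exact sequence in $\cP(A,\U)$ already splits, since the quotient object is projective in $A$-$\U$-{\bf mod}.
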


This implies in particular that $K^\U_0(A)$ is isomorphic to the
Grothendieck group of $\Ulmod$, a result proved in
\cite{GZ}.

\subsection{Proof of Theorem \ref{main}}\label{proof-main}

We now prove Theorem \ref{main} by means of a
series of lemmas.

For any $V$ in $\Ulmod$, $\cS(V)$ is the subspace of
$\Ul$-invariants in $V\otimes_{\Bbbk} A_\fg$ with respect to the
action $\id_V\otimes L_{\Ul}$. The linear map 
$V \otimes_{\Bbbk} A_\fg\longrightarrow V$ given by
\[
 \quad \zeta=\sum_{i}
 v_i\otimes f_i \quad \mapsto  \quad  \zeta(1) = \sum_{i} f_i(1) v_i,
\]
induces a linear map (`evaluation')
\begin{eqnarray}\label{ev}
ev: \cS(V)\longrightarrow V, \quad \zeta \mapsto\zeta(1).
\end{eqnarray}
Note that $\cS(V)$ is an $A$-$\Ul$-module with the standard actions
defined by \eqref{A-action} and the restriction of \eqref{U-action}.
We may also define an $A$-module structure on $V$ in which each $a\in A$
acts as scalar multiplication by $a(1)$. This makes $V$ into an
$A$-$\Ul$-module.

\begin{lemma}
Any $V$ in $\Ulmod$ may be regarded as an $A$-$\Ul$-module as above.
The map \eqref{ev} is $A$-$\Ul$-linear.
\end{lemma}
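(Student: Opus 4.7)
The claim has two parts. To make $V$ into an $A$-module via $a \cdot v = a(1) v$, one observes that evaluation-at-$1_\U$ is an algebra homomorphism $A \to \Bbbk$: $(ab)(1) = a(1)b(1)$ because $\Delta(1_\U) = 1_\U \otimes 1_\U$, and the unit of $A \subseteq \cO_q(\U)$ is the counit $\epsilon$ of $\U$, which satisfies $\epsilon(1_\U) = 1$. Since $\Ul$ acts on $\Bbbk$ via $\epsilon$, verifying compatibility with the $\Ul$-structure on $V$ reduces to showing that for $a \in A$ and $w \in \Ul$ one has $a(w) = \epsilon(w) a(1)$. This follows from the defining $L$-invariance $L_x a = \epsilon(x) a$, which evaluated at $y \in \U$ reads $a(S(x)y) = \epsilon(x) a(y)$; setting $y = 1$ and replacing $x$ by $S^{-1}(w)$, using that $S$ restricts to a bijection on the Hopf subalgebra $\Ul$ and $\epsilon \circ S^{-1} = \epsilon$, yields the required identity. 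Using $(R_z a)(1) = a(z)$ together with $\Delta(x) \in \Ul \otimes \Ul$, one then computes
\[
\sum_{(x)} (x_{(1)} \cdot a)(x_{(2)} v) = \sum_{(x)} a(x_{(1)}) (x_{(2)} v) = a(1) \sum_{(x)} \epsilon(x_{(1)}) x_{(2)} v = a(1) (xv) = x(a\cdot v).
\]

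The $A$-linearity of $ev$ is immediate from $(ba)(1) = b(1) a(1)$. Writing $\zeta = \sum_i v_i \otimes f_i \in \cS(V)$ and abbreviating $\zeta(y) := \sum_i f_i(y) v_i = (\id_V \otimes \mathrm{ev}_y)(\zeta)$, so that $ev(\zeta) = \zeta(1)$, the identity $(R_x f)(1) = f(x)$ gives $ev(x\zeta) = \zeta(x)$; hence $\Ul$-linearity of $ev$ amounts to the equivariance $\zeta(x) = x\zeta(1)$ for all $x \in \Ul$. Applying $\id_V \otimes \mathrm{ev}_y$ to the defining $\id \otimes L_{\Ul}$-invariance of $\zeta$ and using $(L_z f)(y) = f(S(z)y)$, one obtains the relation
\[
\sum_{(x)} x_{(1)} \zeta(S(x_{(2)})\, y) = \epsilon(x) \zeta(y), \qquad x \in \Ul,\ y \in \U.
\]

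The plan is to deduce from the displayed relation the stronger statement $\zeta(xy) = x\zeta(y)$ for all $x \in \Ul$, $y \in \U$, whence $\zeta(x) = x \zeta(1)$ follows by setting $y = 1$. The set $\{x \in \U : \zeta(xy) = x\zeta(y) \text{ for all } y \in \U\}$ is a linear subspace closed under multiplication, since $\zeta((xx')y) = x \zeta(x'y) = xx' \zeta(y)$, so it suffices to verify the property on the standard algebra generators of $\Ul$. For a group-like generator $K$, the relation at $h = K$ collapses to $K \zeta(K^{-1} y) = \zeta(y)$, giving $\zeta(Ky) = K\zeta(y)$; for a skew-primitive generator $e_i$ with $\Delta(e_i) = e_i \otimes K_i + 1 \otimes e_i$, the relation at $h = e_i$ expands as $e_i \zeta(K_i^{-1} y) - \zeta(e_i K_i^{-1} y) = 0$, and the substitution $y \to K_i y$ yields $\zeta(e_i y) = e_i \zeta(y)$; the argument for $f_i$ is identical. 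The main obstacle is precisely this last step: the $\Ul$-linearity does not follow from the relation at $y = 1$ alone, and one is forced into a generator-by-generator analysis, exploiting the specific form of the coproducts on the generators of $\Ul$.
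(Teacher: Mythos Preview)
Your proof is correct, but the route to $\Ul$-linearity differs from the paper's and is more laborious than necessary; in particular your remark that one is ``forced into a generator-by-generator analysis'' is not accurate. The paper inverts your displayed relation in one stroke using only Hopf algebra axioms: from the $\Ul$-invariance of $\zeta$ and coassociativity one has
\[
\sum_{(u)} (u_{(1)}\otimes L_{u_{(2)}})\zeta\otimes u_{(3)} \;=\; \zeta\otimes u,
\]
and applying the pairing $(v\otimes f)\otimes w \mapsto f(w)\,v$ to both sides, the antipode identity $\sum_{(u)} u_{(1)}\otimes S(u_{(2)})u_{(3)} = u\otimes 1$ collapses the left side to $u\cdot\zeta(1)$ while the right side is $\zeta(u)$. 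In your functional notation this is simply the substitution $y\mapsto x_{(2)}z$ in your displayed relation followed by summation over $(x)$: the left side becomes $\sum_{(x)} x_{(1)}\zeta\bigl(S(x_{(2)})x_{(3)}z\bigr)=x\,\zeta(z)$ and the right side $\sum_{(x)}\epsilon(x_{(1)})\zeta(x_{(2)}z)=\zeta(xz)$. Your generator check is valid and buys concreteness, but the paper's argument is uniform in the Hopf algebra and avoids any appeal to the explicit coproducts of $e_i$, $f_i$, $K_i$.
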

\begin{proof}
Given its importance, we give a brief proof of this lemma.
For any $\zeta\in \cS(V)$ and $u\in\Ul$, we have
\[
\sum_{(u)} (u_{(1)}\otimes L_{u_{(2)}})\zeta\otimes u_{(3)} =
\zeta\otimes u
\]
by the $\Ul$-invariance of $\cS(V)$. Using $\sum_{(u)}
(u_{(1)}\otimes L_{u_{(2)}})\zeta(u_{(3)})=u\cdot ev(\zeta)$ and
$\zeta(u) = ev((\id_V\otimes R_u)\zeta)$, we obtain
$u\cdot ev(\zeta)= ev((\id_V\otimes R_u)\zeta)$.  Finally, for any $a\in
A$, we have $ev(a\zeta) = a(1) ev(\zeta)$. This completes the proof.
\end{proof}

In view of the lemma and the fact that the map \eqref{p-map} is
$\Ul$-linear, we have the following $\Ul$-map for each $V$:
\begin{eqnarray}\label{epsilon}
\epsilon_V: \cE\circ\cS(V) \longrightarrow V, \quad p(\zeta) \mapsto
ev(\zeta)=\zeta(1).
\end{eqnarray}

\begin{proposition}\label{epsilon-is}
The map $\epsilon_V$ defined by \eqref{epsilon} for each object $V$
in $\Ulmod$ gives rise to a natural transformation
\begin{eqnarray}
\epsilon: \cE\circ\cS\longrightarrow \id_{\Ulmod},
\end{eqnarray}
which is in fact a natural isomorphism.
\end{proposition}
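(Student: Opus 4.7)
The plan is to verify naturality of $\epsilon$ and then show that each component $\epsilon_V$ is bijective. Naturality is immediate from the definitions: for a morphism $f\colon V\to V'$ in $\Ulmod$, the functor $\cS$ sends it to $f\otimes\id_{A_\fg}$, so for any $\zeta = \sum_i v_i\otimes a_i\in \cS(V)$, one has $\epsilon_{V'}\bigl(\cE\cS(f)(p(\zeta))\bigr) = \sum_i a_i(1)f(v_i) = f\bigl(\epsilon_V(p(\zeta))\bigr)$.

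For bijectivity, I would first establish the result in the case where $V = W|_{\Ul}$ is the restriction of a $\U$-module $W$, and then reduce the general case via the $\Ul$-direct summand embedding already invoked (just after Theorem \ref{key}) in the proof of Theorem \ref{key}(2). In the $\U$-module case, the key step is to construct an explicit $\Ul$-equivariant lift. Fixing a basis $\{v_i\}$ of $W$ with dual basis $\{v_i^*\}$, set for each $w\in W$
\[
\Phi_w := \sum_i v_i \otimes t^w_i \in W\otimes_\Bbbk A_\fg, \qquad t^w_i(x) := \langle v_i^*, x\cdot w\rangle \text{ for } x\in\U;
\]
equivalently, $\Phi_w$ corresponds to the linear map $\U\to W$, $x\mapsto x\cdot w$. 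Using the identity $(L_y f)(x) = f(S(y)x)$, a direct computation gives
\[
\Bigl(\sum_{(y)}(y_{(1)}\otimes L_{y_{(2)}})\Phi_w\Bigr)(x) = \sum_{(y)} y_{(1)} \Phi_w(S(y_{(2)})x) = \sum_{(y)} y_{(1)} S(y_{(2)}) x w = \epsilon(y)\Phi_w(x),
\]
where the last equality is the antipode axiom. Thus $\Phi_w\in\cS(W)$, and since $\Phi_w(1) = w$, the map $\epsilon_W$ is surjective.

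To upgrade surjectivity to bijectivity, I would invoke Theorem \ref{key}(1), which supplies an $A$-$\U$-module isomorphism $\cS(W)\cong W\otimes_\Bbbk A$. Because it is $A$-linear it carries $I\cdot\cS(W)$ onto $W\otimes I$, so $\cE\cS(W)\cong W\otimes(A/I)\cong W$ as $\Ul$-modules, giving $\dim_\Bbbk\cE\cS(W) = \dim_\Bbbk W$; a surjective $\Ul$-linear map between finite-dimensional spaces of equal dimension is an isomorphism. Finally, for a general $V\in\Ulmod$, choose a $\U$-module $W$ and a $\Ul$-decomposition $W|_{\Ul}\cong V\oplus V^\perp$. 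The functor $\cS$ preserves finite direct sums by inspection of \eqref{sections}, and $\cE$ preserves them since $I(M\oplus M') = IM\oplus IM'$, so $\epsilon_W$ decomposes as $\epsilon_V\oplus\epsilon_{V^\perp}$, forcing both summands to be isomorphisms. The main technical point is finding the closed-form lift $w\mapsto\Phi_w$; once it is guessed, $\Ul$-invariance and the evaluation formula $\Phi_w(1)=w$ fall out cleanly from the antipode identity $\sum_{(y)} y_{(1)} S(y_{(2)}) = \epsilon(y)$.
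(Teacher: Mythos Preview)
Your proof is correct and follows essentially the same strategy as the paper: verify naturality directly, establish that $\epsilon_W$ is an isomorphism when $W$ is the restriction of a $\U$-module by combining surjectivity with the dimension count coming from Theorem~\ref{key}(1), and then pass to arbitrary $V$ via the direct-summand decomposition $W\cong V\oplus V^\perp$. The only cosmetic difference is that you exhibit the surjectivity of $ev$ through the explicit section $w\mapsto\Phi_w$, whereas the paper simply reads off $ev(\cS(W))=W$ from the isomorphism $\cS(W)\cong W\otimes_\Bbbk A$; your $\Phi_w$ is in fact just the comodule map $\delta_W(w)$ that the paper later uses to build $\hat\delta_M$.
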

\begin{proof}
For any map $\alpha: V\longrightarrow V'$ in $\Ulmod$,
$\cE\circ\cS(\alpha)$ is given by
\[
\cE\circ\cS(\alpha)(p(\zeta))=p((\alpha\otimes \id_{A})\zeta), \quad
\forall p(\zeta)\in\cE\circ\cS(V).
\]
Now $\epsilon_V\circ p((\alpha\otimes \id_{A})\zeta) =
((\alpha\otimes \id_{A})\zeta)(1) = \alpha(\zeta(1))$. This proves
the commutativity of the following diagram
\[
\begin{array}{c c c c c}
&\cE\circ\cS(V)& \stackrel{\epsilon_V}{\longrightarrow}& V& \\
&\downarrow&                        &\downarrow \alpha&   \\
&\cE\circ\cS(V')& \stackrel{\epsilon_{V'}}{\longrightarrow}& V'&
 \end{array},
\]
where the left vertical map is $\cE\circ\cS(\alpha)$.  Hence
$\epsilon$ is a natural transformation between the functors
$\cE\circ\cS$ and $\id_{\Ulmod}$.

We have already noted that any module $V$ in $\Ulmod$ may be embedded in
the restriction of a finite dimensional $\U$-module $W$ as a direct
summand, that is, $W\cong V\oplus V^\bot$ for some $\Ul$-module
$V^\bot$. Now $\cS(W) \cong W\otimes_{\Bbbk}A$ as $A-\U$-module,
by Theorem \ref{key}(2). Using this isomorphism, we obtain that
$\cE\circ\cS(W)\cong W$, $ev(\cS(W))=W$, and that $\epsilon_W:
\cE\circ\cS(W) \longrightarrow W$ is an isomorphism. Since
$\cS(V)\oplus\cS(V^\bot) \cong W\otimes_{\Bbbk} A$, we have
$\epsilon_W = \epsilon_V\oplus\epsilon_{V^\bot}$. As
$\epsilon_W$ is an isomorphism, both $\epsilon_V$ and
$\epsilon_{V^\bot}$ are isomorphisms.
\end{proof}

It follows from Appendix \ref{coalgebra} that there is a one to one
correspondence between locally finite left $\U$-modules (of
type-$(1, 1, \dots, 1)$) and right $A_\fg$-comodules. For any
locally finite left $\U$-module $M$, denote the corresponding right
$A_\fg$-comodule structure map by
\[
\delta_M: M \longrightarrow M\otimes_{\Bbbk} A_\fg, \quad w\mapsto
\delta_M(w) = \sum_{(w)} w_{(1)}\otimes w_{(2)}.
\]
Let $M$ be and object of $\cM(A, \U)$, and consider the
composition $ M \stackrel{\delta_M}{\longrightarrow}
M\otimes_{\Bbbk}A_\fg \stackrel{p\otimes\id}{\longrightarrow}
p(M)\otimes_{\Bbbk}A_\fg. $ Note that for any $m\in M$ and
$u\in\Ul$, we have
\[
\begin{aligned}
\sum p(u_{(1)} m_{(1)})\otimes L_{u_{(2)}}(m_{(2)}) &= \sum
p(m_{(1)})\otimes \langle m_{(2)}, u_{(1)}\rangle \langle m_{(3)},
S(u_{(2)})\rangle m_{(4)}\\& =\epsilon(u) \sum p(m_{(1)})\otimes
m_{(2)}.
\end{aligned}
\]
Hence the image of this map is contained in
$\cS(\cE(M))$, and we obtain a map
\[
\hat\delta_M=(p\otimes\id)\circ\delta_M: M \longrightarrow
\cS(\cE(M)),
\]
which is clearly $A$-linear. Moreover for any $m\in M$ and $x\in\U$, we
have
\[
\hat\delta_M(x m) = \sum p(m_{(1)})\otimes R_{x}m_{(2)} =x
\hat\delta_M(m).
\]
Thus the map is also $\U$-linear, and is therefore a homomorphism
of $A-\U$-modules. This leads to the following result, which was proved
in \cite[Proposition 3.8]{ZZ} in a slightly different form,
and was used there to show that the Grothedieck groups of $\cP(A, \U)$
and $\Ulmod$ are isomorphic.

\begin{lemma} \label{delta-2}
If $M$ is in $\cP(A, \U)$, then $\hat\delta_M: M\longrightarrow
\cS\circ\cE(M)$ is an isomorphism of $A-\U$-modules.
\end{lemma}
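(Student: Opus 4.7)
The plan is to verify the lemma first for free $A$-$\U$-modules of finite rank and then to propagate the result to arbitrary projective objects via the direct summand characterisation of Corollary \ref{projective}.

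For the free case, let $V_A = A \otimes_\Bbbk V$ with $V$ a finite dimensional $\U$-module. Since $A/I \cong \Bbbk$, one has $IV_A = I \otimes V$, hence $\cE(V_A) = V_A/IV_A \cong V$ as $\Ul$-modules, and consequently $\cS\cE(V_A) = \cS(V)$. A direct Sweedler computation, using the diagonal right $A_\fg$-comodule structure on $V_A$ together with the counit identity $\sum_{(a)}\epsilon_0(a_{(1)})a_{(2)} = a$, yields
\[
\hat\delta_{V_A}(a \otimes v) = \sum_{(v)} v_{(0)} \otimes a\, v_{(1)},
\]
where $\delta_V(v) = \sum v_{(0)} \otimes v_{(1)} \in V \otimes A_\fg$ is the comodule structure corresponding to the $\U$-action on $V$. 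I would then identify this formula with the canonical $A$-$\U$-module isomorphism $V_A \stackrel{\sim}{\longrightarrow} \cS(V)$ furnished by Theorem \ref{key}(1), which is built from precisely the same data, namely the comodule map $\delta_V$ composed with multiplication by $A$ inside $A_\fg$. Hence $\hat\delta_{V_A}$ is an isomorphism.

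For a general $M \in \cP(A, \U)$, Corollary \ref{projective} provides a direct sum decomposition $V_A \cong M \oplus M'$ with $M' \in \cP(A, \U)$. The functor $\cE$ preserves finite direct sums since quotients do, and $\cS$ does likewise because $\Ulmod$ is semisimple and the formation of $\Ul$-invariants is additive. Naturality of $\hat\delta$ then gives $\hat\delta_{V_A} \cong \hat\delta_M \oplus \hat\delta_{M'}$ under the natural identifications; since $\hat\delta_{V_A}$ is an isomorphism, so is $\hat\delta_M$. The main obstacle is the identification step in the free case: one must recall the precise construction of the isomorphism of Theorem \ref{key}(1) from \cite{GZ} and perform a careful Sweedler-notation computation to match it against the formula for $\hat\delta_{V_A}$, in particular verifying that the output indeed lies in the $\Ul$-invariant subspace defining $\cS(V)$.
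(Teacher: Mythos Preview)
Your proposal is correct. The paper does not give its own proof of this lemma, instead citing \cite[Proposition 3.8]{ZZ}, so there is no direct comparison to be made; however, your strategy---establish the result for free $A$-$\U$-modules $V_A$ by identifying $\hat\delta_{V_A}$ with the isomorphism of Theorem \ref{key}(1), then descend to an arbitrary object of $\cP(A,\U)$ via Corollary \ref{projective} and naturality---is precisely parallel to the paper's own proof of the companion statement Proposition \ref{epsilon-is}, and is the natural way to proceed.

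One small remark on execution: you invoke naturality of $\hat\delta$ for the direct-summand step, but in the paper this is stated as Proposition \ref{delta-1}, which appears \emph{after} Lemma \ref{delta-2}. There is no circularity, since the proof of Proposition \ref{delta-1} is an elementary Sweedler computation that does not use Lemma \ref{delta-2}; you should simply note that naturality is immediate from the definition of $\hat\delta_M$ (or prove it first). Also, your explicit formula $\hat\delta_{V_A}(a\otimes v)=\sum v_{(0)}\otimes a\,v_{(1)}$ is correct, and rather than appealing to the unspecified form of the isomorphism in \cite{GZ}, you can verify bijectivity directly by writing down the inverse $\sum_i w_i\otimes g_i \mapsto \sum_i g_i\,S^{-1}(w_{i(1)})\otimes w_{i(0)}$ and checking, using the $\Ul$-invariance condition defining $\cS(V)$, that it lands in $A\otimes V$.
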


\begin{proposition}\label{delta-1}
The maps $\hat\delta_M$, for $M$ in $\cM(A, \U)$ define
a natural transformation $\id_{\cM(A, \U)}\longrightarrow
\cS\circ \cE$.
\end{proposition}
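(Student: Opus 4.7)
The plan is to verify that for each morphism $f\colon M\to N$ in $\cM(A,\U)$, the square
\[
\begin{array}{ccc}
M & \stackrel{\hat\delta_M}{\longrightarrow} & \cS\circ\cE(M)\\
{}_f\!\downarrow & & \downarrow\!{}_{\cS\circ\cE(f)}\\
N & \stackrel{\hat\delta_N}{\longrightarrow} & \cS\circ\cE(N)
\end{array}
\]
commutes. The first step is to unpack what $\cE(f)$ and $\cS\circ\cE(f)$ mean. Since $f$ is $A$-linear, one checks that $f(IM)\subset IN$, so $f$ descends to a map $\cE(f)\colon M_0\to N_0$ satisfying $\cE(f)\circ p_M=p_N\circ f$; since $f$ is $\U$-linear and the action of $\Ul$ on $M_0$ is the one induced from $R$, this induced map is $\Ul$-linear, so $\cE(f)$ is indeed a morphism in $\Ulmod$. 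Applying $\cS$ then gives the morphism $\cS\cE(f)=(\cE(f)\otimes\id_{A_\fg})|_{\cS(\cE(M))}$ in $\cP(A,\U)$.

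The second step is to translate $\U$-linearity of $f$ into a compatibility with the comodule structure maps. By the correspondence between locally finite left $\U$-modules and right $A_\fg$-comodules recalled before Lemma \ref{delta-2}, any $\U$-linear map between locally finite $\U$-modules is automatically a right $A_\fg$-comodule map; in Sweedler notation this reads
\[
\delta_N(f(m))=\sum_{(m)} f(m_{(1)})\otimes m_{(2)},\qquad m\in M.
\]

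The final step is the Sweedler-notation chase. For $m\in M$,
\[
\hat\delta_N(f(m))=(p_N\otimes\id)\delta_N(f(m))=\sum_{(m)} p_N(f(m_{(1)}))\otimes m_{(2)},
\]
while
\[
\cS\cE(f)(\hat\delta_M(m))=(\cE(f)\otimes\id)\sum_{(m)} p_M(m_{(1)})\otimes m_{(2)}=\sum_{(m)} \cE(f)(p_M(m_{(1)}))\otimes m_{(2)}.
\]
The two expressions agree because $\cE(f)\circ p_M=p_N\circ f$. This yields the required commutativity, completing the proof.

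The argument is essentially bookkeeping, and I do not anticipate a genuine obstacle; the only point that requires mild care is verifying that $\cE(f)$ is well-defined and $\Ul$-linear, but this is immediate from $A$-linearity of $f$ (which ensures $f(IM)\subset IN$) together with $\U$-linearity (which restricts to $\Ul$-linearity on the $R$-action that survives in the quotient).
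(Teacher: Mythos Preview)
Your proof is correct and follows essentially the same approach as the paper: both arguments reduce to the identity $(\beta\otimes\id)\delta_M=\delta_N\beta$ (your comodule compatibility of $f$) combined with $\cE(f)\circ p_M=p_N\circ f$, and then compose with $p_N\otimes\id$ to obtain the commuting square. You are simply more explicit about why $\cE(f)$ is well-defined and $\Ul$-linear, which the paper leaves implicit.
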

\begin{proof}
Let $\beta: M\longrightarrow N$ be a morphism in $\cM(A, \U)$. Then
\[ \cS\circ\cE(\beta)\hat\delta_M=
(p_N\otimes\id)(\beta\otimes\id)\delta_M, \] where $p_N$ is the map
\eqref{p-map} for $N$. Using the fact that
$(\beta\otimes\id)\delta_M = \delta_N\beta$, we obtain
$\cS\circ\cE(\beta)\hat\delta_M= \hat\delta_N\beta$. That is, the
following diagram commutes
\[
\begin{array}{c c c c c}
&M& \stackrel{\hat\delta_M}{\longrightarrow}&\cS\circ\cE(M)& \\
&\beta\downarrow&                        &\downarrow&   \\
&N& \stackrel{\hat\delta_N}{\longrightarrow}&\cS\circ\cE(N)&
 \end{array},
\]
where the right vertical map is $\cS\circ\cE(\beta)$.
\end{proof}

The next statement is an immediate consequence of
Lemma \ref{delta-1} and Lemma \ref{delta-2}.

\begin{proposition}\label{delta-is}
There is a natural isomorphism  $\hat\delta: \id_{\cP(A, \U)}
\longrightarrow \cS\circ \cE_\cP$.
\end{proposition}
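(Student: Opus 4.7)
The proof is essentially a direct combination of the two preceding results, since the hard work has already been done. My plan is to observe that Proposition \ref{delta-1} supplies a natural transformation $\hat\delta: \id_{\cM(A,\U)} \Rightarrow \cS\circ\cE$ whose components $\hat\delta_M$ are $A$-$\U$-module homomorphisms for every $M$ in $\cM(A,\U)$, and that Lemma \ref{delta-2} guarantees each such component is an isomorphism when $M$ lies in the full subcategory $\cP(A,\U)\subset\cM(A,\U)$.

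First I would restrict the natural transformation $\hat\delta$ along the inclusion $\cP(A,\U)\hookrightarrow\cM(A,\U)$. Since $\cE_\cP$ is by definition the restriction of $\cE$ to $\cP(A,\U)$, and since $\cS$ takes values in $\cP(A,\U)$ by Theorem \ref{key}(2), the composite $\cS\circ\cE_\cP$ is a well-defined endofunctor of $\cP(A,\U)$, and the restricted family $\{\hat\delta_M\}_{M\in\cP(A,\U)}$ provides a natural transformation $\id_{\cP(A,\U)}\Rightarrow \cS\circ\cE_\cP$ (naturality of the restriction is inherited from the naturality established in Proposition \ref{delta-1}, applied to morphisms $\beta:M\to N$ in the subcategory).

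Next I would invoke Lemma \ref{delta-2} pointwise: for every object $M$ of $\cP(A,\U)$, the component $\hat\delta_M$ is an isomorphism of $A$-$\U$-modules. A natural transformation whose every component is an isomorphism is, by definition, a natural isomorphism, which yields the claim.

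There is essentially no obstacle here, as the statement is a packaging of Proposition \ref{delta-1} (naturality on $\cM(A,\U)$, hence on the full subcategory $\cP(A,\U)$) and Lemma \ref{delta-2} (invertibility on projectives). The only point worth being explicit about is that $\cS\circ\cE_\cP$ genuinely lands in $\cP(A,\U)$, which follows from Theorem \ref{key}(2) applied to the finite-dimensional $\Ul$-module $\cE_\cP(M)=M/IM$; with that noted, no further argument is required.
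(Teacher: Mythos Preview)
Your proposal is correct and follows exactly the paper's approach: the paper simply states that Proposition~\ref{delta-is} is an immediate consequence of Lemma~\ref{delta-2} and Proposition~\ref{delta-1}, and your argument spells out precisely this combination (restricting the natural transformation of Proposition~\ref{delta-1} to $\cP(A,\U)$ and invoking Lemma~\ref{delta-2} for componentwise invertibility). Your additional observation that $\cS\circ\cE_\cP$ lands in $\cP(A,\U)$ via Theorem~\ref{key}(2) is a welcome clarification that the paper leaves implicit.
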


\begin{proof}[Proof of Theorem \ref{main}]
It follows from Proposition \ref{epsilon-is} and
Proposition \ref{delta-is} that the categories $\Ulmod$ and $\cP(A,
\U)$ are equivalent.
\end{proof}

\begin{appendix}
\section{Comodules and smash products}\label{coalgebra}\label{smash}

The notions of module algebras and equivariant modules can also be
formulated in terms of co-algebras and comodules, which we 
discuss briefly here. Some of this material is used in
Section \ref{proof-main}.
We shall also discuss the relationship between
locally finite equivariant modules over a $\U$-module algebra $A$,
and finitely generated modules over the smash product of $A$ and $\U$.

Recall from Section \ref{proof-main}
the coordinate functions of the $\U$-representations associated with
the $\U$-modules in $\U$-{\bf mod} span a Hopf algebra $\cO_q(\U)$.

A comodule over $\cO_q(U)$ is a vector space $M$ with a
$\Bbbk$-linear map $\delta: M\longrightarrow M\otimes_\Bbbk
\cO_q(U)$ satisfying the following conditions:
\[
(\delta\otimes\id_{\cO_q(U)})\circ \delta =(\id_M\otimes\Delta)
\delta, \quad  (\id_M\otimes\epsilon_0)\delta = \id_M.
\]
We use Sweedler's notation $\delta(v)=\sum_{(v)} v_{(1)}\otimes
v_{(2)}$ for the co-action on $v\in M$, so that $v_{(1)}\in M$,
while $v_{(2)}\in {\cO_q(U)}$.

A locally finite left $\U$-module $M$ of type-$(1, \dots, 1)$
is naturally a right comodule
over $\cO_q(U)$, and vice versa. The comodule structure map
$\delta: M\longrightarrow M\otimes_\Bbbk \cO_q(U)$ and the module
structure map $\phi: \U\otimes M \longrightarrow M$ are related to
each other by
\[
\delta(v)(x) = \phi(x\otimes v), \quad \text{for all \ } x\in \U, \ v\in M.
\]
In this definition, 
local $\U$-finiteness of $M$ is needed in order for $\delta(v)$ to
lie in $M\otimes \cO_q(\U)$ for all $v\in M$.

If a $\U$-module is not locally finite, it does not
correspond to any comodule over $\cO_q(\U)$. The local finiteness
condition is built into the definition of comodules because
$\cO_q(U)$ is the Hopf algebra spanned by the coordinate functions of
$\U$-representations corresponding to objects in $\U$-{\bf mod}.

Now a locally finite $\U$-module algebra $A$ of type-$(1, \dots, 1)$
is nothing but an
associative algebra which is a right comodule over $\cO_q(\U)$
satisfying the conditions
\[
\begin{aligned}
&\delta_A(1_A) = 1_A\otimes \epsilon, \\
&\delta_A(a b) = \sum_{(a), (b)} a_{(1)} b_{(1)}
\otimes a_{(2)} b_{(2)}, \quad \forall a, b\in A,
\end{aligned}
\]
where $\delta_A$ is the comodule structure map of $A$.

Let $A$ be a locally finite $\U$-module algebra of type-$(1, \dots, 1)$, 
and as above, denote the
co-action of $\cO_q(\U)$ on any  $a\in A$ by $a\mapsto \sum_{(a)}
a_{(1)}\otimes a_{(2)}$.

A locally $\U$-finite, type-$(1, \dots, 1)$ , $\U$-equivariant
$A$-module $M$ is a left $A$-module, which is also a right
$\cO_q(\U)$-comodule, such that the $A$-module structure and
$\cO_q(\U)$-comodule structure $\delta_M: M \longrightarrow M\otimes
\cO_q(\U)$ are compatible in the sense that for all $a\in A$ and
$v\in M$,
\[
\delta_M(a v) = \sum_{(a), (v)} a_{(1)} v_{(1)} \otimes a_{(2)} v_{(2)}.
\]

A notion closely related to equivariant modules is the {\em smash
product} \cite[Definition 4.1.3]{M}. Given a $\U$-module algebra $A$,
one may construct the smash product $R:=A\#\U$, which is an
associative algebra with underlying vector space
$A\otimes_\Bbbk\U$ and multiplication defined by
\[  (a\otimes u) (b\otimes v) = \sum_{(u)}a (u_{(1)}\cdot b) \otimes u_{(2)} v \]
for all $a, b\in A$ and $u, v\in \U$.

It is easy to show that a left $\U$-equivariant $A$-module is in
fact a left $R$-module. However, a
finitely generated $R$-module need not be locally $\U$-finite,
and so 
is not generally in $\cM(A, \U)$. Therefore, $\cP(A, \U)$ is
different from the category of finitely generated projective
$R$-modules.

A case in point is the following example.
Take $A=\C(q)$ be equipped with trivial $\U$-action (through the
co-unit). In this case, both $\cM(A, \U)$ and $\cP(A, \U)$ coincide with
$\U$-{\bf mod}.  On the other hand, the smash product $R=A\#\U$ is $\U$ itself.
The category of finitely generated
projective $\U$-modules is totally different from the category
$\U$-{\bf mod}.

This shows that the equivariant K-theory of the $\U$-module algebra
$A$ introduced in Section \ref{definitions-K} is quite different 
from the usual K-theory of the smash product 
$R:=A\#\U$, a fact which we have already pointed 
out in Remark \ref{not-smash}.
\end{appendix}

\bigskip

\noindent {\bf Acknowledgements}.
We thank Ngau Lam for discussions throughout the course of this work. We also thank
Alan Carey for helpful correspondence and Peter Donovan for suggestions on
improving the first draft of this paper.  This work is supported
by the Australian Research Council.

\bigskip


\begin{thebibliography}{9999}
\bibitem{APW} Andersen, H.H.; Polo, P; Wen, K. X. Representations of quantum
algebras. {\sl Invent. Math. \bf 104} (1991), no. 1, 1--59.

\bibitem{ART} Artamonov, V. A.,
The quantum Serre problem.  (Russian)  Uspekhi Mat. Nauk {\bf 53}  (1998),  no. 4(322), 
3--76;  translation in
Russian Math. Surveys {\bf 53} (1998), no. 4, 657--730. 

\bibitem{AZ} Artin, M.; Zhang, J. J. Noncommutative projective schemes. 
Adv. Math.  {\bf 109}  (1994),  no. 2, 228--287.

\bibitem{BH1} Bass, H.; Haboush, W. Linearizing certain reductive group actions.
Trans. Amer. Math. Soc.  {\bf 292}  (1985),  no. 2, 463--482.

\bibitem{BH2} Bass, H.; Haboush, W. Some equivariant $K$-theory of affine
algebraic group actions.  Comm. Algebra  {\bf 15}  (1987),  no. 1-2,
181--217.

\bibitem{Be} Bell, J. P. The equivariant 
Grothendieck groups of the Russell-Koras threefolds. 
{\sl Canad. J. Math. \bf 53}  (2001),  no. 1, 3--32.

\bibitem{Bo} Bott, R. Homogeneous vector bundles.
{\sl Ann. of Math. (2) \bf 66} (1957),
203--248.

\bibitem{BZ} A. Berenstein, S. Zwicknagl, Braided symmetric and
exterior algebras,  Trans. Amer. Math. Soc.  {\bf 360}  (2008),  no.
7, 3429--3472.

\bibitem{BK} K. A. Brown and K. R. Goodearl, Lectures on Algebraic Quantum
Groups, Birkhauser, 2002.

\bibitem{Co} Connes, A. {\em Noncommutative geometry}, Academic
Press (1994).

\bibitem{DLPS} L. Dabrowski, G. Landi, M. Paschke, A. Sitarz.
The spectral geometry of the equatorial Podle\'s sphere. {\sl C. R.
Math. Acad. Sci. Paris  \bf 340}  (2005),  no. 11, 819--822.

\bibitem{Dp} P. Donovan,  The Lefschetz-Riemann-Roch formula.
{\sl Bull. Soc. Math. Fr. \bf 97} (1969)  257 - 273.

\bibitem{DFR} S. Doplicher, K. Fredenhagen, and J. E. Roberts,
{\sl Commun. Math. Phys.
\bf 172} (1995) 187.

\bibitem{D} Drinfeld, V. G. {\em Quantum groups}.
Proceedings of the International Congress of Mathematicians, Vol. 1,
2 (Berkeley, Calif., 1986), 798--820, Amer. Math. Soc., Providence,
RI, 1987.


\bibitem{GZ} Gover, A. R.; Zhang, R. B. Geometry of quantum homogeneous vector
bundles and representation theory of quantum groups. I.  {\sl Rev.
Math. Phys. \bf 11} (1999), no. 5, 533--552.



\bibitem{K} B. Keller,  Derived categories and their uses, in:
Handbook of Algebra, Vol. 1, North-Holland, Amsterdam, 1996.

\bibitem{Ku} A. O. Kuku,
{\em Representation theory and higher algebraic $K$-theory}.
Pure and Applied Mathematics (Boca Raton), {\bf 287}. Chapman \&
Hall/CRC, Boca Raton, FL, 2007.

\bibitem{Lam} T. Y. Lam, Serre's problem on projective modules.
Springer Monographs in Mathematics. Springer-Verlag, Berlin, 2006.

\bibitem{LZZ} G.I. Lehrer, H. Zhang and R.B. Zhang, 
A quantum analogue of the first fundamental theorem of invariant theory,
arXiv:0908.1425.


\bibitem{Maj}  S. Majid, Noncommutative Riemannian and spin
geometry of the standard $q$-sphere.
{\sl Comm. Math. Phys.  \bf 256}  (2005),  no. 2, 255--285.

\bibitem{MM} Masuda, Kayo; Miyanishi, Masayoshi, Affine 
pseudo-planes and cancellation problem,  
{\sl Trans. Amer. Math. Soc. \bf 357}  (2005),  no. 12, 4867--4883

\bibitem{MMP} Masuda, Mikiya; Moser-Jauslin, Lucy; Petrie, Ted,
Invariants of equivariant algebraic vector 
bundles and inequalities for dominant weights.  
{\sl Topology \bf 37}  (1998),  no. 1, 161--177.

\bibitem{M} Montgomery, S.
Hopf algebras and their actions on rings,
   Regional Conference Series in Math. No.~{\bf 82}, Amer.~Math.~Soc.,
   Providence, R.I. (1993).

\bibitem{Nest}  Ryszard Nest and Christian Voigt,
Equivariant Poincar\'e duality for quantum group actions.
arXiv:0902.3987

\bibitem{P} S. B. Priddy, Koszul resolutions.
{\sl Trans. Amer. Math. Soc.  \bf 152}  (1970) 39--60.

\bibitem{PP} A. Polishchuk and L. Positselski, {\em Quadratic
algebras}, University Lecture Notes Series No. {\bf 37}, American
Mathematical Society, Providence, RI, 2005.

\bibitem{Q} D. Quillen, {\em Higher algebraic K-theory I}.
In Algebraic K-theory, I: Higher K-theories
(Proc. Conf., Battelle Memorial Inst., Seattle, Wash., 1972), pp.
85 - 147. Lecture Notes in Math., Vol. 341, Springer, 1973.

\bibitem{Seg} Segal, G.  {\sl Equivariant $K$-theory.
Publications mathematiques de l'I.H.E.S., tome \bf 34}
(1968) 129-151.

\bibitem{SvdB} Stafford, J. T.; van den Bergh, M. Noncommutative curves and
noncommutative surfaces. {\sl Bull. Amer. Math. Soc. (N.S.) \bf 38}
(2001) 171 - 216.

\bibitem{Th} R.W. Thomason, Lefschetz-Riemann-Roch theorem and coherent trace
formula. {\sl Invent. Math. \bf 85} (1986) 515-543.

\bibitem{Sr} Srinivas, V. Algebraic $K$-theory. Progress in Mathematics, 90.
Birkhäuser Boston, Inc., Boston, MA, 1991.

\bibitem{W} C. A. Weibel, An introduction to homological algebra.
Cambridge University Press, 1994.

\bibitem{Z} Zhang, R. B. Quantum superalgebra representations on
cohomology groups of noncommutative bundles.  {\sl J. Pure Appl.
Algebra \bf 191} (2004) 285-314.

\bibitem{ZZ} G. Zhang and R.B. Zhang,
Equivariant vector bundles on quantum homogeneous
spaces. {\sl Math Research Lett, \bf 15} (2008) 297 - 307.

\bibitem{Zw} S. Zwicknagl, R-matrix Poisson algebras and their
deformations.  {\sl Adv. Math.  \bf 220}   (2009) 1 - 58.
\end{thebibliography}
\end{document}